\documentclass[twoside,leqno,11pt]{article}

\usepackage{a4wide}

\usepackage{amsmath}
\usepackage{rotate,float}
\usepackage{color}
\usepackage{amsmath,amssymb,bbm,enumerate}
\usepackage{pstricks}

\usepackage{graphicx} 

\usepackage{pst-3d,float}
\newtheorem{theorem}{Theorem}[section]

\newtheorem{definition}{Definition}[section]

\newtheorem{lemma}{Lemma}[section]

\newtheorem{proposition}[theorem]{Proposition}
\newtheorem{remark}{Remark}[section]

\newenvironment{proof}{\medskip\noindent{\bf Proof.}\;}{\null\hfill $\Box$\par\medskip }

\hyphenation{ana-lysis}

\title{The Monogenic Synchrosqueezed Wavelet Transform: A tool for the Decomposition/Demodulation of AM-FM images}
\author{ \small \sc M. Clausel, T. Oberlin, V. Perrier\thanks{University of Grenoble and CNRS, Laboratoire Jean Kuntzmann UMR 5224, Saint Martin d'H\`eres, France.}}


\def\rmd{\mathrm{d}}
\def\rme{\mathrm{e}}
\def\rmi{\mathrm{i}}
\def\rmj{\mathrm{j}}
\def\rmk{\mathrm{k}}
\def\1{\mathbbm{1}}
\def\R{\mathbb{R}}

\begin{document}
\maketitle
\begin{abstract}
The synchrosqueezing method aims at decomposing 1D functions as superpositions of a small number of ``Intrinsic Modes'', supposed to be well separated both in time and frequency. Based on the unidimensional wavelet transform and its reconstruction properties, the synchrosqueezing transform provides a powerful representation of multicomponent signals in the time-frequency plane, together with a reconstruction of each mode.

In this paper, a bidimensional version of the synchrosqueezing transform is defined, by considering a well--adapted extension of the concept of analytic signal to images: the monogenic signal. The natural bidimensional counterpart of the notion of Intrinsic Mode is then the concept of ``Intrinsic Monogenic Mode'' that we define. Thereafter, we investigate the properties of its associated Monogenic Wavelet Decomposition. This leads to a natural bivariate extension of the Synchrosqueezed Wavelet Transform, for decomposing and processing multicomponent images. Numerical tests validate the effectiveness of the method for different examples.
\end{abstract}
{\it Keywords:} monogenic signal, wavelet transform, directional time-frequency image analysis, synchrosqueezing. \\
{\it MSC 2010:} 65T60, 92C55, 94A08.

\section{Introduction}
\label{sec:intro}
In the past few years, there has been an increasing interest in representing images using spatially varying sinusoidal waves. As the understanding of the theory advanced, amplitude- and frequency-modulation (AM--FM) decompositions have been applied in a large range of problems: for example motion estimation using a flow--optical method based on an assumption of phase--invariance \cite{fleet:jepson:1990,basarab:liebgott:delacharte:2009,basarab:gueth:liebgott:delacharte:2009}, reconstruction of breast cancer images \cite{elshinawy:chouikha:2003,elshinawy:zeng:lo:chouikha:2004}, texture analysis~\cite{kokkinos:evangelopoulos:maragos:2009,tay:2008}, or ultrasound image segmentation methods~\cite{belaid:boukerroui:maingourd:lerallut:2011} based on the
concept of monogenic signal and quadrature filters~\cite{felsberg:sommer:2001}. A survey of applications of AM--FM decompositions in medical imaging can also be found in~\cite{murray:2012}.

In each case, the main challenge is to decompose any input image $s(x)$ into a sum of bidimensional AM--FM harmonics of the form
\begin{equation}\label{e:decIMF2D}
s(x_1,x_2)=\sum_{\ell=1}^L s_{\ell}(t) = \sum_{\ell=1}^L A_\ell(x_1,x_2)\cos(\varphi_\ell(x_1,x_2))\;,
\end{equation}
where $A_\ell>0$ denotes a slowly--varying amplitude function, $\varphi_\ell$ denotes the
phase, and $\ell = 1,\cdots,L$ indexes the different AM--FM harmonics. To each phase function, one can associate an instantaneous frequency vector field defined as $\omega_\ell=\nabla\varphi_\ell$. Finding the components $s_\ell$ from the bidimensional signal $s$ is called the {\it decomposition problem}.

When the decomposition is given, another subsequent problem is to determine for each component $s_\ell$ its amplitude, phase and frequency functions involved in~(\ref{e:decIMF2D}). The amplitudes can be related to the energy contained at each point of the image, whereas significant texture variations are
captured in the frequency content. For example, in the case of a single component,
the instantaneous frequency vectors are orthogonal to the isovalue intensity
lines of an image, while their magnitudes provide a
measure of local frequency content. The problem of estimating the amplitudes, phases and instantaneous frequencies of a signal of the form (\ref{e:decIMF2D}) (bidimensional or not) is called the {\it demodulation problem}.

In the one dimensional case, these two problems have been widely investigated both from the theoretical and practical point of view. To address the demodulation problem in the one dimensional setting, a first and necessary step is to define, in a proper way, the concept of amplitude, phase and instantaneous frequency of a given signal. To this end, one can use the well--known Hilbert transform, defined for any $f\in L^2(\mathbb{R})$ as
\[
\mbox{For a.e.}~ t\in\mathbb{R},\,\mathcal{H}f(t)=\lim_{\varepsilon\to 0}\left(\frac{1}{\pi}\int_{|t-s|>\varepsilon}\frac{f(s)}{t-s}~\rmd s\right)\;.
\]
The analytic signal associated to $f$ is then the complex--valued function $F=(1+\rmi\mathcal{H})f$. Thereafter, one defines amplitude $A$ and phase $\varphi$ of $f$ as, respectively, the modulus and argument of the analytic signal $F$ associated to $f$ (the uniqueness of phase being ensured under smoothness assumptions on $f$ and under some initial conditions of the form $\varphi(0)=\varphi_0$). One then obtains
\[
\mbox{For a.e.} ~t\in\mathbb{R},\,F(t)=A(t)\rme^{\rmi \varphi(t)}\;.
\]
The instantaneous frequency of $f$ is thus the derivative of the phase $\varphi'$ \cite{qian:2006}. The practical estimation of amplitude, phase and instantaneous frequency was the subject of numerous studies (see~\cite{huang:2009} for a survey). The ``naive'' method, which consists in using the Hilbert transform to estimate $A$ and $\varphi$, is known to be numerically unstable \cite{rodriguez:pattichis:2005}. Alternative methods  use the wavelet transform associated to reallocation techniques: in the one dimensional case, the time--frequency localization of wavelets allows to compute robust estimations of instantaneous frequency lines in the time-frequency representation: several approaches have been developed in the 90th, one should cite the method of ``wavelet ridges'' (see the two pioneer works~\cite{carmona:hwang:torresani:1997} and~\cite{carmona:hwang:torresani:1999}), the squeezing method introduced in~\cite{daubechies:Maes:1996} or the reassignment  method~\cite{auger:flandrin:1995}.

On the other side, the decomposition problem consists in developing decompositions of any signal into a sum of ``well--behaved'' AM--FM components, and was widely studied. A recent attempt  in this context is Empirical Mode Decomposition (EMD), initially introduced by Huang et al. in~\cite{huang:1998} and later popularized by Flandrin and his co--authors (see e.g.~\cite{flandrin:rilling:goncalves:2004} or~\cite{rilling:flandrin:rilling:2008}).

%
%
%
%
Basically, EMD is the output of an iterative algorithm which provides an adaptive decomposition of any signal into several AM--FM components. It is now used in a wide range of applications including meteorology,
structural stability analysis, and medical studies \cite{huang:2009}. In spite of its simplicity and its efficiency, this method is hard to analyze mathematically, since it is defined in an empirical way. To tackle this problem, in~\cite{daubechies:2011}, the authors propose and analyze an alternative method called
``synchrosqueezing'' derived from reassignment methods: they introduce a class of functions which can be viewed as the superposition of a reasonable number of modes, i.e. which read $\sum_\ell A_\ell(t)\rme^{\rmi\varphi_\ell(t)}$, where the amplitude $A_\ell(t)$ of each mode is slowly varying with respect to its instantaneous frequency $\omega_\ell(t)=\varphi_\ell'(t)$. For each function belonging to this class, the synchrosqueezing provides an estimation of this (unknown) decomposition. Synchrosqueezing presents many similarities with Empirical Mode Decomposition as shown in~\cite{daubechies:2011} (see also~\cite{wu:flandrin:daubechies:2011} for a comparison of these two methods or~\cite{meignen2012new} for more about potential applications). In addition, since synchrosqueezing is based on the wavelet transform, it can be used to address simultaneously the decomposition and demodulation problem as done in~\cite{daubechies:2011}.

In this paper, we will focus on the two dimensional case and extend the synchrosqueezing approach to images. Our first step consists in defining a convenient extension of the concept of analytic signal, in order to properly define the notion of amplitude and phase of an image. Two main generalizations of the analytic signal to the bidimensional setting have been considered in the literature: the hypercomplex and the monogenic signal defined respectively in~\cite{bulow:sommer:1999} and~\cite{felsberg:sommer:2001}. Here, we focus on the monogenic setting, since its characteristics lead to easier interpretation. The monogenic signal associated to an image is defined using the Riesz transform. If we are given a real valued function $f\in L^2(\mathbb{R}^2)$, one defines its Riesz transform $\mathcal{R}f$ as the following vector--valued function
\[
\mathcal{R}f=\begin{pmatrix}\mathcal{R}_1f\\\mathcal{R}_2f\end{pmatrix}\;,
\]
where for any $i=1,2$ and a.e. $x\in\mathbb{R}^2$ one has
\[
\mathcal{R}_if(x)=\lim_{\varepsilon\to 0}\left(\frac{1}{\pi}\int_{|x-y|>\varepsilon}\frac{(x_i-y_i)}{|x-y|^3}f(y)~\rmd y\right)\;.
\]
Then, the monogenic signal associated to $f$ is
\[
\mathcal{M}f=\begin{pmatrix}f\\\mathcal{R}f\end{pmatrix}\;.
\]
The characteristics of the monogenic signal are usually defined using the quaternionic formalism (see Appendix~\ref{s:appendixQuater} for some background about quaternionic calculus and Appendix~\ref{s:appendixRiesz} for the main properties of the Riesz transform): the monogenic signal $\mathcal{M}f$ associated to $f$ can be read
\[
\mathcal{M}f=f+\mathcal{R}_1f~\rmi+\mathcal{R}_2f ~\rmj\;,
\]
where $(1,\rmi,\rmj,\rmk)$ denotes the canonical basis of the algebra $\mathbb{H}$ of the quaternions. In~\cite{felsberg:sommer:2001} and~\cite{yang:qian:sommen:2011}, it is proved that the monogenic signal can be written as
\[
\mathcal{M}f=A~\rme^{\varphi n_{\theta}}=A~(\cos\varphi+n_\theta\sin\varphi)\mbox{ with }n_\theta=\cos\theta~\rmi+\sin\theta~\rmj\;,
\]
where $A$ is a positive real valued function and $\varphi$ and $\theta$ are two real valued functions defined in a unique way (under suitable assumptions). The function $A$ is called the amplitude of $f$, $\varphi$ its phase and $n_\theta$ its local orientation. The function $\omega=\nabla\varphi$ will be called the instantaneous frequency of $f$. To illustrate these definitions, we consider the case where $f(x)=A_0\cos(k\cdot x)$ with $k=(k_1,k_2)$ and $k_1>0$. Set $\theta_0=\mathrm{Arctan}(k_2/k_1)$. One then has
\[
\mathcal{R}f(x_1,x_2)=A_0\begin{pmatrix}\sin(k_1 x_1+k_2 x_2)\cos\theta_0\\ \sin(k_1 x_1+k_2 x_2)\sin\theta_0\end{pmatrix}
= A_0 \frac{k}{|k|}\sin(k \cdot x)\;,
\]
and
\[
\mathcal{M}f(x)=\begin{pmatrix}f(x)\\\mathcal{R}f(x)\end{pmatrix}=A_0\rme^{(k\cdot x) (\cos\theta_0~\rmi+ \sin\theta_0~\rmj)}\;.
\]
Then for all $x\in\mathbb{R}^2$, one can set
\[
A(x)=A_0,\;\varphi (x)=k\cdot x,\;\theta(x)=\theta_0=\mathrm{Arctan}(k_2/k_1)\;.
\]

The aim of the present paper is to define the synchrosqueezing of bidimensional images, in the monogenic setting, in order to address both the decomposition and the demodulation problems. To this end, we will consider monogenic signals which are superposition of monogenic waves of the form
\begin{equation}\label{e:monogwaves}
A(x)\rme^{\varphi(x) n_{\theta(x)}}\;,
\end{equation}
where $A$ and $\theta$ are slowly varying functions with respect to $\varphi$.

We will consider successively the two different problems of deconvolution and decomposition. As in \cite{daubechies:2011}, our method is based on the continuous wavelet transform. Therefore, in Section~\ref{s:monogwav}, we recall a recent extension of the continuous bidimensional wavelet  transform to the monogenic setting. We then investigate in Section~\ref{s:spectralline}, the problem of estimating the amplitude and instantaneous frequency of a monogenic spectral line of the form (\ref{e:monogwaves}). Subsequently, we define in Section~\ref{s:synchro} the bidimensional synchrosqueezing and its application to the decomposition of multicomponent images. Finally, Section~\ref{s:numerical1} gives some insights on the practical implementation of bidimensional synchrosqueezing, and provides numerical experiments. Proofs are postponed to Section~\ref{s:proofs} to make reading easier.

\section{2D-wavelet transform and monogenic wavelet transform}\label{s:monogwav}
In the one dimensional setting, the synchrosqueezing method is based on the one--dimensional wavelet analysis: indeed, the wavelet transform is used both to decompose an analytic signal into several components, and also to estimate the amplitude and the instantaneous frequency of each component. The main advantage of using the wavelet analysis is to avoid the use of the discrete Hilbert transform, since there is a link between the wavelet coefficients of the analytic signal associated to a real function $f$, and its analytic wavelet coefficients ({\sl i.e.} the wavelet coefficients of $f$ among an analytic wavelet).

In the bidimensional case, it is then quite natural to wonder if this approach can be extended using bidimensional wavelet analysis, replacing the analytic signal by the monogenic signal associated to our data.
Since the monogenic signal can be viewed as a 3D vector field, it can be componentwise analyzed by the usual wavelet transform: Section~\ref{s:wavanal} recalls first some basics about the bidimensional continuous wavelet transform. Thereafter in Section~\ref{s:defmonogwav}, we recall the main characteristics of the {\it monogenic wavelet analysis}, which is an extension of the usual wavelet analysis introduced in~\cite{olhede:metikas:2009a} and~\cite{unser:vandeville:2009}. We would like to point out that, for any bidimensional real image $f$, the wavelet coefficients of the monogenic signal $\mathcal{M}f$ can be related to the {\it monogenic wavelet coefficients} of $f$ (see Proposition~\ref{pro:wavcoeff:monog} for a precise statement). Then, using monogenic wavelet analysis, one can recover the characteristics of the monogenic signal associated to our data, without estimating it.

In what follows, for any $(a,b,\alpha)\in \mathbb{R}^*_+\times \mathbb{R}\times (0,2\pi)$, we denote by $D_a$ the dilation operator, $T_b$ the translation operator and  $R_\alpha$ the rotation operator defined on $L^2(\mathbb{R}^2)$ by:
\[
D_a f(x)=a^{-1}f(x/a),\,T_b f(x)=f(x-b),\,R_\alpha f(x)=f(r_{\alpha}^{-1}x)\; ,
\]
where $r_{\alpha}$ is the usual $2\times 2$ rotation matrix of angle $\alpha$:
\begin{equation}\label{e:rotation}
r_\alpha=\begin{pmatrix}\cos\alpha&-\sin\alpha\\\sin\alpha&\cos\alpha\end{pmatrix}\;.
\end{equation}
\subsection{The usual bidimensional wavelet transform}\label{s:wavanal}
We briefly recall classical definitions about the bidimensional continuous wavelet transform. A (real or complex) function $\psi\in L^2(\mathbb{R}^2)$ is called an admissible wavelet if it satisfies
\footnote{The Fourier Transform of $\psi$ being defined by:
$\widehat{\psi}(\xi)=\frac{1}{2\pi} \int_{\mathbb{R}^2} \psi(x) e^{-i\xi\cdot x} dx$
}:
\begin{equation}
\label{admissibility}
{C}_{\psi}=(2\pi)^2\int_{\mathbb{R}^2}\frac{|\widehat{\psi}(\xi)|^2}{|\xi|^2}~d\xi~<~ +\infty
\end{equation}
As usual (see~\cite{Antoine:Murenzi:Vandergheynst:Twareque:2004}), the wavelet family $\{\psi_{a,\alpha,b}\}_{(a,\alpha,b)\in \mathbb{R}^*_+\times (0,2\pi)\times \mathbb{R}^2}$ is defined by $\psi_{a,\alpha,b}= T_b R_\alpha D_a \psi$. One then defines the wavelet coefficients of any $f\in L^2(\mathbb{R}^2)$ as follows:
\[
c_f(a,\alpha,b)=\int_{\mathbb{R}^{2}}f(x)~\overline{\psi_{a,\alpha,b}(x)}~\rmd x\;.
\]
If the wavelet is assumed to be isotropic, the wavelet coefficients do not depend on $\alpha$. In this case, we will denote $c_f(a,b)=c_f(a,0,b)$ and $\psi_{a,b}=\psi_{a,0,b}$.

Any square integrable function $f$ can be recovered from its wavelet coefficients using the so--called reconstruction formula:
\begin{equation}\label{e:reconstruction}
f(x)=\frac{2\pi}{C_{\psi}}\int_{b\in \mathbb{R}^2}\int_{\alpha\in (0,2\pi)}\int_{a\in (0,+\infty)}c_f(a,\alpha,b)~\psi_{a,\alpha,b}(x)~\frac{\rmd a}{a^3}\rmd \alpha~\rmd b\;,
\end{equation}
where this equality stands in $L^2(\mathbb{R}^2)$. When the wavelet is assumed to be isotropic, we get a simpler expression:
\[
f(x)=\frac{1}{C_{\psi}}\int_{b\in \mathbb{R}^2}\int_{a\in (0,+\infty)}c_f(a,b)~\psi_{a,b}(x)~\frac{\rmd a}{a^3}\rmd b\;.
\]
Other reconstruction formulas are available (see e.g.~\cite{Antoine:Murenzi:Vandergheynst:Twareque:2004}), among which the pointwise reconstruction formula, obtained by summing over scales only. In the isotropic case, it reads:
\begin{equation}\label{e:synchro:id:iso}
\,f(x)= \frac{2\pi}{\tilde C_{\psi}}\int_{0}^{+\infty}c_f(a,x)~\frac{\rmd a}{a^2}\mbox{ with }\tilde C_{\psi}= \int_{\mathbb{R}^2}\frac{\overline{\widehat{\psi}(\xi)}}{|\xi|^2}~\rmd\xi\;.
\end{equation}
As in the one dimensional case \cite{daubechies:2011}, this formula will play a special role in the bidimensional synchrosqueezing.\\
For any real function $f\in L^2(\mathbb{R}^2)$, one can also define the wavelet transform of its monogenic signal
$F=\mathcal{M}f=f+\mathcal{R}_1 f ~\rmi+\mathcal{R}_2 f~\rmj\;$, as:
\[
c_F=c_f+c_{\mathcal{R}_1 f}~\rmi+c_{\mathcal{R}_2 f}~\rmj=\begin{pmatrix}c_f\\c_{\mathcal{R}_1 f}\\c_{\mathcal{R}_2 f}\end{pmatrix}\;.
\]
\subsection{The monogenic wavelet transform}\label{s:defmonogwav}
We now present the continuous monogenic wavelet transform as defined in~\cite{olhede:metikas:2009a}. We consider a real admissible wavelet $\psi$ and define $\psi^{(M)}=\mathcal{M}\psi=\begin{pmatrix}\psi\\ \mathcal{R}_1\psi\\ \mathcal{R}_2 \psi\end{pmatrix}$ the associated monogenic wavelet. Observe that $\psi^{(M)}$ is also an admissible wavelet (taking values in $\mathbb{R}^3$), since each of its components satisfies relation (\ref{admissibility}). We define below the monogenic wavelet coefficients of a real function.
\begin{definition}
Let $f\in L^2(\mathbb{R}^2)$. The monogenic wavelet coefficients $c_f^{(M)}(a,\alpha,b)$ of $f$ are defined by:
\begin{equation}\label{e:wc}
c_f^{(M)}(a,\alpha,b)=\int_{\mathbb{R}^{2}}f(x)~{\psi^{(M)}_{a,\alpha,b}(x)}~\rmd x\;,
\end{equation}
where for any $(a,\alpha,b)\in (0,+\infty)\times (0,2\pi)\times \mathbb{R}^2$, $\psi^{(M)}_{a,\alpha,b}=T_b R_\alpha D_a(\mathcal{M}\psi)$.
\end{definition}
\begin{remark}
For any $(a,\alpha,b)$, $c_f^{(M)}(a,\alpha,b)$ is a Clifford vector, and thus can be written as $c_f^{(M)}(a,\alpha,b)=c_f(a,\alpha,b)+c_f^{(1)}(a,\alpha,b)~\rmi+c_f^{(2)}(a,\alpha,b)~\rmj$ where we denote for $i=1,2$
\[
c_f^{(i)}(a,\alpha,b)=\int_{\mathbb{R}^{2}}f(x)~(T_b R_\alpha D_a \mathcal{R}_i\psi)(x)~\rmd x\;,
\]
(see Appendix~\ref{s:appendixQuater} for more details).
\end{remark}

The next proposition states that there exists an explicit relationship between the monogenic coefficients of a real image $f$ and the usual wavelet coefficients of its monogenic signal $\mathcal{M}f$.
\begin{proposition}\label{pro:wavcoeff:monog}
Let $f\in L^2(\mathbb{R}^2)$ and denote by $F=\mathcal{M}f$ the monogenic signal associated to $f$. Then for any $(a,\alpha,b)\in (0,+\infty)\times (0,2\pi)\times \mathbb{R}^2$
\begin{equation}\label{e:wavcoeff:monog}
c_{F}(a,\alpha,b)=\begin{pmatrix}1&0\\0&-r_{\alpha}\end{pmatrix}c_f^{(M)}(a,\alpha,b)\;.
\end{equation}
\end{proposition}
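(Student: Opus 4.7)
The plan is to compare components of the two quaternion-valued objects. Writing $F = f + (\mathcal{R}_1 f)\,\rmi + (\mathcal{R}_2 f)\,\rmj$, the usual wavelet coefficient $c_F(a,\alpha,b)$ is, by linearity of the integral, the column vector with entries $c_f(a,\alpha,b),\,c_{\mathcal{R}_1 f}(a,\alpha,b),\,c_{\mathcal{R}_2 f}(a,\alpha,b)$. On the other hand, $c_f^{(M)}(a,\alpha,b)$ has entries $c_f(a,\alpha,b),\,c_f^{(1)}(a,\alpha,b),\,c_f^{(2)}(a,\alpha,b)$, with the latter two defined by the integrals $\int f(x)\,(T_b R_\alpha D_a \mathcal{R}_i\psi)(x)\,\rmd x$. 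Since $\psi$ is real, and since translation, rotation and dilation preserve real-valuedness, no complex conjugation appears; the first components of both vectors are thus equal, explaining the $1$ in the upper-left corner of the matrix. It remains to show that the bottom two coordinates satisfy $(c_{\mathcal{R}_1 f},\,c_{\mathcal{R}_2 f})^T = -r_\alpha\,(c_f^{(1)},\,c_f^{(2)})^T$.

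To pass from one to the other, I would use two basic facts about the Riesz transform collected in Appendix~\ref{s:appendixRiesz}. First, each $\mathcal{R}_i$ is \emph{anti-self-adjoint} on $L^2(\mathbb{R}^2)$, because its Fourier multiplier $-\rmi\,\xi_i/|\xi|$ is purely imaginary; this gives
\[
c_{\mathcal{R}_i f}(a,\alpha,b)=\int \mathcal{R}_i f(x)\,(T_b R_\alpha D_a \psi)(x)\,\rmd x = -\int f(x)\,\mathcal{R}_i(T_b R_\alpha D_a \psi)(x)\,\rmd x\;.
\]
Second, $\mathcal{R}_i$ commutes with translations $T_b$ and dilations $D_a$ but not with rotations; more precisely, viewing $\mathcal{R}=(\mathcal{R}_1,\mathcal{R}_2)^T$ as a vector operator one has the covariance
\[
\mathcal{R}(R_\alpha g) = r_\alpha\, R_\alpha (\mathcal{R} g)\;.
\]
This is the essential computation and the one step to be done carefully: it follows either by a Fourier-side check (both sides have symbol $-\rmi\,\xi/|\xi|$ applied to $\hat g(r_\alpha^{-1}\xi)$, with the $r_\alpha$ recovered from $\xi = r_\alpha r_\alpha^{-1}\xi$), or by the change of variables $y=r_\alpha z$ in the singular integral defining $\mathcal{R}_i$, using $(x-r_\alpha z)_i = \sum_j (r_\alpha)_{ij}(r_\alpha^{-1}x - z)_j$.

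Combining the two facts,
\[
\mathcal{R}_i(T_b R_\alpha D_a \psi) = T_b\,\mathcal{R}_i(R_\alpha D_a \psi) = \sum_{j=1}^2 (r_\alpha)_{ij}\, T_b R_\alpha D_a \mathcal{R}_j \psi\;,
\]
and substituting in the formula for $c_{\mathcal{R}_i f}$ yields $c_{\mathcal{R}_i f}(a,\alpha,b) = -\sum_j (r_\alpha)_{ij}\, c_f^{(j)}(a,\alpha,b)$, which is exactly the required matrix identity. The first row of the block matrix just records $c_f = c_f$, completing the proof.

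The only real obstacle is getting the commutation relation $\mathcal{R}R_\alpha = r_\alpha R_\alpha \mathcal{R}$ correct: one must be careful not to confuse $r_\alpha$ with $r_\alpha^{-1}$, and to respect the convention $R_\alpha f(x)=f(r_\alpha^{-1}x)$ used in the paper. Once that identity and the anti-self-adjointness $\mathcal{R}_i^*=-\mathcal{R}_i$ are in hand, the proof is a one-line manipulation producing the factor $-r_\alpha$ on the lower block.
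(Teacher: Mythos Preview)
Your proof is correct and follows essentially the same route as the paper: both use the translation/dilation invariance and steerability of the Riesz transform (your identity $\mathcal{R}R_\alpha = r_\alpha R_\alpha \mathcal{R}$ is exactly Proposition~\ref{pro:steerability}) together with the anti-self-adjointness $\mathcal{R}_i^* = -\mathcal{R}_i$ from Proposition~\ref{e:Ri}. The only cosmetic difference is the order of the two steps---you first move $\mathcal{R}$ from $f$ to $\psi_{a,\alpha,b}$ via antisymmetry and then pull it through $T_bR_\alpha D_a$, whereas the paper first commutes $T_bR_\alpha D_a$ with $\mathcal{R}$ and then applies antisymmetry---but the computation is the same.
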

\begin{proof}
We will prove that
\[
c_f^{(M)}(a,\alpha,b)=\begin{pmatrix}1&0\\0&-r_\alpha^{-1}\end{pmatrix}c_{F}(a,\alpha,b)\;,
\]
which is equivalent to (\ref{e:wavcoeff:monog}). By definition of the monogenic wavelet transform one has:
\[	c_f^{(M)}(a,\alpha,b)=\int_{\mathbb{R}^2} f(x)~{T_b R_\alpha D_a(\mathcal{M}\psi)}(x)~\rmd x\;.
\]
The translation-, scale-invariance  and steerability properties of the Riesz transform recalled in Propositions~\ref{pro:invar} and~\ref{pro:steerability} of Appendix \ref{s:appendixRiesz}, imply that:
\[
T_b R_\alpha D_a(\mathcal{R}\psi)=r_\alpha^{-1}\mathcal{R}(\psi_{a,\alpha,b})\;.
\]
Hence
\begin{equation}\label{e:1}
\int_{\mathbb{R}^2} f(x)~{T_b R_\alpha D_a (\mathcal{R}\psi)}(x)~\rmd x=r_\alpha^{-1}\int_{\mathbb{R}^2} f(x)~{\mathcal{R}(\psi_{a,\alpha,b})(x)}~\rmd x\;.
\end{equation}
Since by Proposition~\ref{e:Ri}, the Riesz transform is a componentwise antisymmetric operator on $L^2(\mathbb{R}^2)$, one deduces that:
\begin{equation}\label{e:2}
\int_{\mathbb{R}^2} f(x){\mathcal{R}(\psi_{a,\alpha,b})(x)}~\rmd x=\begin{pmatrix}< f,\mathcal{R}_1(\psi_{a,\alpha,b})>_{L^2(\mathbb{R}^2)}\\< f,\mathcal{R}_2(\psi_{a,\alpha,b})>_{L^2(\mathbb{R}^2)}\end{pmatrix}=-\int_{\mathbb{R}^2} (\mathcal{R}f)(x)~{\psi_{a,\alpha,b}(x)}~\rmd x\;.
\end{equation}
Equations~(\ref{e:1}) and (\ref{e:2}) then directly imply relation~(\ref{e:wavcoeff:monog}).
\end{proof}

We now consider the case an admissible {\it isotropic} real--valued wavelet $\psi$. Examples include the Mexican hat (Laplacian of Gaussian) or the Morse wavelets (see~\cite{olhede:metikas:2009b} and Section \ref{s:numerical1}). In such a case, the wavelets $\psi_{a,\alpha,b}$ as well as the wavelet coefficients $c_{F}(a,\alpha,b)$ of the monogenic signal do not depend on the orientation $\alpha$.  From now, we then denote $\psi_{a,\alpha,b}$ and  $c_{F}(a,\alpha,b)$ respectively by $\psi_{a,b}$ and $c_{F}(a,b)$. Thus equation (\ref{e:wavcoeff:monog}) can be simplified in the following way:
\begin{equation}\label{e:wavcoeff:monogenic:iso}
\begin{array}{lll}
c_{F}(a,b)&=&\begin{pmatrix}1&0\\0&-r_{\alpha}\end{pmatrix}c_f^{(M)}(a,\alpha,b)=\begin{pmatrix}1&0\\0&-r_{\alpha}\end{pmatrix}\begin{pmatrix}c_f(a,b)\\c^{(1)}_f(a,\alpha,b)\\c^{(2)}_f(a,\alpha,b)\end{pmatrix}\\
&=&\begin{pmatrix}c_f(a,b)\\-\cos\alpha~c^{(1)}_f(a,\alpha, b)+\sin\alpha~c^{(2)}_f(a,\alpha, b)\\-\sin\alpha~c^{(1)}_f(a,\alpha, b)-\cos\alpha~c^{(2)}_f(a,\alpha, b)\end{pmatrix}\;.
\end{array}
\end{equation}
In practical situations, it will be easier to consider the case in which $\alpha=0$. We then get that \begin{equation}\label{e:wavcoeff:monogenic:iso2}
c_{F}(a,b)=\begin{pmatrix}1&0\\0&-Id\end{pmatrix}c_f^{(M)}(a,0, b)\;.
\end{equation}
To sum up, in the isotropic case, the monogenic wavelet coefficients of the 2D real image $f$ can be related in a very simple way to the wavelet coefficients of the monogenic signal using~(\ref{e:wavcoeff:monogenic:iso2}). It will be useful in the sequel since when analyzing images, one deals with a real--valued signal $f$ and aims at recovering the wavelet coefficients $c_{F}(a,b)$ of its monogenic signal $F$ without computing $F$.
\section{Wavelet analysis of bidimensional spectral lines}\label{s:spectralline}
In this section, we tackle the problem of estimating the amplitude and instantaneous frequency of a bidimensional mode of the form $A(x)\rme^{\varphi(x)n(x)}$. In the one dimensional setting, the ``wavelet--ridge'' method has proved to be efficient to address this problem \cite{carmona:hwang:torresani:1997, carmona:hwang:torresani:1999}. In~\cite{olhede:metikas:2009a,olhede:metikas:2009b} it has been extended to the bidimensional setting using monogenic wavelet analysis.

Here we follow an alternative approach consisting in extending synchrosqueezing to the bidimensional context. We first define precisely the bidimensional modes that we are considering, and that we will call ``Intrinsic Monogenic Mode Function'' (IMMF). Then we deduce the notion of bidimensional frequency vector and state an approximation result using the monogenic wavelet transform.

Let us first define our concept of bidimensional modes:
\begin{definition}
\label{def:IMMF}
Let $\varepsilon>0$. An Intrinsic Monogenic Mode Function (IMMF) with accuracy $\varepsilon$ and smoothness $\sigma>0$ is a function $F\in \dot{B}^{-\sigma}_{\infty,1}(\mathbb{R}^2,\mathbb{H})$ of the form
\begin{equation}\label{e:IMMF}
F(x)=A(x)\rme^{\varphi(x)n_{\theta(x)}}\mbox{ with }n_{\theta(x)}=\cos(\theta(x))\rmi+\sin(\theta(x))\rmj\;,
\end{equation}
where $A>0$ and $A,\theta\in\mathcal{C}^1(\mathbb{R}^2)\cap L^\infty(\mathbb{R}^2)$, $\varphi\in \mathcal{C}^2(\mathbb{R}^2)$. The function $A$ is called the amplitude of $F$, whereas $\varphi$ and $n_\theta$ are called respectively the scalar phase and the local orientation of $F$.\\
The function $\varphi$ is assumed to have bounded derivatives of order $1$ and $2$:
\begin{equation}\label{e:hyp1a}
\forall i\in\{1,2\},\;0<\inf_{x\in\mathbb{R}^2}\left|\partial_{x_i} \varphi(x)\right|\leq \sup_{x\in\mathbb{R}^2}\left|\partial_{x_i} \varphi(x)\right|<\infty\;.
\end{equation}
\begin{equation}\label{e:M}
M=\max_{i_1,i
_2=1,2}\sup_{x\in \mathbb{R}^2} \left|\partial^2_{x_{i_1} x_{i_2}}\varphi(x)\right|<\infty\;.
\end{equation}
Further we assume that $A,\theta,\nabla^2\varphi$ are slowly varying functions with respect to $\nabla\varphi$, namely that for $i=1,2$
\begin{equation}\label{e:hyp3a}
\forall x\in\mathbb{R}^2, \max\left(|\partial_{x_i}A(x)|,|\partial_{x_i} \theta(x)|\right)\leq \varepsilon|\partial_{x_i}\varphi(x)|\;.
\end{equation}
and
\begin{equation}\label{e:hyp4a}
\forall x\in\mathbb{R}^2, \max_{i_1,i_2=1,2}\left|\partial^2_{x_{i_1} x_{i_2}}\varphi(x)\right|\leq \varepsilon|\nabla \varphi(x)|\;.
\end{equation}
\end{definition}
\begin{remark}\label{rem:besov}
Observe that, in the sequel, we shall consider non--square integrable functions. In this case, the two reconstruction formulas~(\ref{e:reconstruction}) and~(\ref{e:synchro:id:iso}) may not hold (see Appendix B of~\cite{jaffard:meyer:ryan:2001}). We then need an additive assumption to ensure that the integral
$
\int_{0}^{+\infty}c_{F}(a,b)\frac{\rmd a}{a^{2}}\;
$
exists to give some sense to the reconstruction formula~(\ref{e:synchro:id:iso}). Moreover, we will estimate the behavior of
$$\int_{a_0}^{+\infty}|c_{F}(a,b)| \frac{\rmd a}{a^{2}}$$ for any $a_0>0$.\\
The additional hypothesis $F\in \dot{B}^{-\sigma}_{\infty,1}(\mathbb{R}^2,\mathbb{H})$ for $\sigma>0$ is equivalent to the following condition on the wavelet coefficients (see for e.g. \cite{triebel:1978}):
\begin{equation}
\label{besov}
\|F\|_{\dot{B}^{-\sigma}_{\infty,1}}=\int_{0}^\infty \sup_{b\in\mathbb{R}^2}|c_F(a,b)|\frac{\rmd a}{a^{2-\sigma}}<+\infty\;.
\end{equation}
In such case, we easily deduce that for any $a_0>0$:
\begin{equation}\label{e:remc}
\sup_{b\in\mathbb{R}^2}\int_{a_0}^\infty |c_F(a,b)|\frac{\rmd a}{a^{2}} = \sup_{b\in\mathbb{R}^2}\int_{a_0}^\infty |c_F(a,b)| \frac{1}{a^{\sigma}}\frac{\rmd a}{a^{2-\sigma}}\leq a_0^{-\sigma}~\|F\|_{\dot{B}^{-\sigma}_{\infty,1}}
\end{equation}
Moreover, by hypothesis, $F\in \mathcal{C}^1(\mathbb{R}^2,\mathbb{H})$ and $\nabla F$ is bounded. Using the expression of the wavelet coefficients then implies that:
$$
|c_F(a,b)| \leq a^2 \|\nabla F\|_{L^{\infty}} \|x \psi\|_{L^1}~,~~~~\forall b\in\R^2
$$
which ensures that for any $a_0>0$
\begin{equation}\label{e:remb--}
\sup_{b\in\mathbb{R}^2}\int_0^{a_0}|c_F(a,b)|\frac{\rmd a}{a^2}<\infty\;.
\end{equation}
and finally leads to the existence of the integral
$\int_{0}^{+\infty}c_{F}(a,b)\frac{\rmd a}{a^{2}}$.
\end{remark}
\begin{remark}
In~\cite{olhede:metikas:2009b} authors considered spectral lines of the form $f(x)=A(x)\cos(\varphi(x))$ with $A,\nabla\varphi$ slowly varying with respect to $\varphi$. The authors then approximate the monogenic signal $F$ associated to $f$ by an IMMF of the form~(\ref{e:IMMF}) with  constant amplitude,  linear phase and  constant local orientation.
\end{remark}
We now focus on the estimation of the amplitude and instantaneous frequency of an IMMF. To this end, we need some assumptions on the wavelet that we list below:\\
\noindent{\bf Assumptions (W)}
\begin{enumerate}[(i)]
\item the wavelet $\psi$ is isotropic and real--valued.
\item $\psi\in W^{1,1}(\mathbb{R}^2)$.
\item The first three moments of $\psi$ and $\nabla\psi$ are finite, namely
\[
\sup_{\alpha\in \{1,2,3\}}I_\alpha<\infty,\,\sup_{\alpha\in \{1,2,3\}}I'_\alpha<\infty\;,
\]
where for any $\alpha>0$
\begin{equation}\label{integrales}
I_\alpha=\int_{\mathbb{R}^2} |x|^\alpha |\psi(x)|\rmd x~,~~\,I'_\alpha=\int_{\mathbb{R}^2}|x|^\alpha|\nabla\psi(x)|\rmd x\;.
\end{equation}
\end{enumerate}
We first deal with the simple case of an IMMF of constant amplitude and phase:
\begin{lemma}\label{lem:cst}
Let $F$ be an IMMF of the form $F(x)=A_0\rme^{(k\cdot x) n_{\theta}}$ with $k=(k_1,k_2)\in \mathbb{R}^2$, $A_0\in \mathbb{R}^*_+, \theta \in \mathbb{R}$, and let $\psi$ be an isotropic real wavelet belonging to $W^{1,1}(\mathbb{R}^2)$. \\
The wavelet transform of $F$ is given by:
\begin{equation}\label{e:wavcoeff0}
c_{F}(a,b)=A_0 a\widehat{\psi}(a k)\left(\cos(k\cdot b)+\sin(k\cdot b)(\cos\theta~\rmi+\sin\theta~ \rmj)\right)=a\widehat{\psi}(a k)\left(A_0 \rme^{(k\cdot b)n_{\theta}}\right)\,
\end{equation}
and one has for $i=1,2$:
\begin{equation}\label{e:wavcoeffdiff}
\partial_{b_i}c_{F}(a,b)=k_i n_{\theta}\left(a\widehat{\psi}(a k)\right)\left(A_0\rme^{(k\cdot b)n_{\theta}}\right)\;.
\end{equation}
The vectors $k$ and $n_{\theta}$ can then be recovered using the two following relations:
\[
k_1 n_{\theta} =\partial_{b_1} c_{F}(a,b)\times (c_{F}(a,b))^{-1}\;,
\]
\[
k_2 n_{\theta} =\partial_{b_2} c_{F}(a,b)\times (c_{F}(a,b))^{-1}\;.
\]
\end{lemma}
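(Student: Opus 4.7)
The plan is to exploit the identity $n_\theta^2=-1$, which makes the real span of $\{1,n_\theta\}$ a commutative subalgebra of $\mathbb{H}$ isomorphic to $\mathbb{C}$. Every quantity that will appear — the wavelet coefficient $c_F(a,b)$, its partial derivatives in $b$, and its inverse — lives in this subalgebra, so quaternionic noncommutativity never intervenes and the whole computation reduces to a one-line complex-variable check.

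For equation~(\ref{e:wavcoeff0}), I would first expand $F(x)=A_0\cos(k\cdot x)+A_0 n_\theta\sin(k\cdot x)$ and use that $\psi_{a,b}$ is real (since $\psi$ is real and dilation/translation preserve reality) to write $c_F(a,b)=A_0\int\cos(k\cdot x)\psi_{a,b}(x)\,\rmd x+A_0 n_\theta\int\sin(k\cdot x)\psi_{a,b}(x)\,\rmd x$. The change of variables $x=ay+b$ combined with the trigonometric addition formulas turns each integral into a product of $\cos(k\cdot b)$ or $\sin(k\cdot b)$ with an integral of $\cos(ak\cdot y)\psi(y)$ or $\sin(ak\cdot y)\psi(y)$ against $\rmd y$. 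Isotropy forces $\psi$ to be even, so the sine integrals vanish and the cosine integral equals $\widehat{\psi}(ak)$ up to the Fourier normalization; reassembling yields~(\ref{e:wavcoeff0}), equivalently the closed form $c_F(a,b)=a\widehat{\psi}(ak)\,A_0\,\rme^{(k\cdot b)n_\theta}$.

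Equation~(\ref{e:wavcoeffdiff}) then follows by direct differentiation of this closed form: since $n_\theta$ commutes with itself and with real scalars, $\partial_{b_i}\rme^{(k\cdot b)n_\theta}=k_i n_\theta\rme^{(k\cdot b)n_\theta}$, exactly as in the scalar case. For the two recovery identities, note that $a\widehat{\psi}(ak)$ is a nonzero real scalar (at scales where the wavelet sees the frequency $k$), so $c_F(a,b)$ is invertible in $\mathbb{H}$ with $c_F(a,b)^{-1}=(a\widehat{\psi}(ak))^{-1}A_0^{-1}\rme^{-(k\cdot b)n_\theta}$. Multiplying $\partial_{b_i}c_F(a,b)$ on the right by this inverse and using that $\rme^{(k\cdot b)n_\theta}$ and $\rme^{-(k\cdot b)n_\theta}$ commute (both belong to $\mathbb{R}\oplus\mathbb{R}n_\theta$), all scalar and exponential factors cancel and what remains is $k_i n_\theta$.

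I expect no genuine obstacle: the only thing to keep straight is the order of multiplication in the inversion step, but because every cancellation happens inside the commutative subalgebra $\mathbb{R}\oplus\mathbb{R}n_\theta$, the order is forced. Note also that the $W^{1,1}$ hypothesis on $\psi$ is stronger than needed here — $\psi\in L^1$ already suffices to make the integral defining $c_F(a,b)$ converge against the bounded function $F$ — but it will presumably be used in subsequent statements where derivatives of $\psi$ must be integrated against non-constant amplitudes and phases.
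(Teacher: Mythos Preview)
Your argument is correct. The treatment of~(\ref{e:wavcoeff0}) matches the paper's: change of variables $x=au+b$, trigonometric expansion, and the evenness of the isotropic $\psi$ to kill the sine integrals.

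Where you diverge is in proving~(\ref{e:wavcoeffdiff}). You differentiate the closed form $c_F(a,b)=a\widehat{\psi}(ak)\,A_0\,\rme^{(k\cdot b)n_\theta}$ directly in $b_i$, using the commutative subalgebra $\mathbb{R}\oplus\mathbb{R}n_\theta$; the paper instead invokes $\psi\in W^{1,1}$ and $F\in L^\infty$ to push $\partial_{b_i}$ under the integral, obtaining an integral against $\partial_{x_i}\psi$, and then repeats the Fourier computation with $\widehat{\partial_{x_i}\psi}(\xi)=\rmi\xi_i\widehat{\psi}(\xi)$. Your route is shorter and, as you observe, does not need the $W^{1,1}$ hypothesis at all for this constant-amplitude, linear-phase case. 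The paper's route, while heavier here, is deliberately setting up the computational template that is reused verbatim in the proof of Proposition~\ref{pro:wavcoeffIMMF}, where the amplitude, phase, and orientation vary and no closed form for $c_F$ is available to differentiate --- there one genuinely needs to differentiate under the integral and control the resulting terms against $\partial_{x_i}\psi$. So your argument buys economy in this lemma, theirs buys reusability downstream.
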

\begin{proof}
By definition, since the wavelet is isotropic real, one has:
\[
c_F(a,b)=a^{-1}\int_{\mathbb{R}^2}F(x)\psi\left(\frac{x-b}{a}\right)\rmd x
= a\int_{\mathbb{R}^2}F(a u +b)\psi(u)\rmd u
\]
A simple calculation leads to:
\[
c_F(a,b)=a A_0
\begin{pmatrix}\int_{\mathbb{R}^2}\cos(k\cdot (au+b)) \psi(u)\rmd u\\
\cos \theta \int_{\mathbb{R}^2}\sin(k\cdot (au+b))\psi(u)\rmd u\\
\sin\theta\int_{\mathbb{R}^2}\sin(k\cdot (au+b))\psi(u)\rmd u
\end{pmatrix}
=A_0 a\widehat{\psi}(a k)
\begin{pmatrix}\cos(k.b)\\\sin(k.b)\cos(\theta)\\\sin(k.b)\sin(\theta)
\end{pmatrix}=a\widehat{\psi}(a k)F(b)\;.
\]
where we have used $\widehat{\psi}(-a k)=\widehat{\psi}(a k)$ since the wavelet is isotropic.

\

Now, since $\psi\in W^{1,1}(\mathbb{R}^2)$ and $F\in L^{\infty}(\mathbb{R}^2)$, we get:
$$
\partial_{b_i}c_{F}(a,b)=a^{-1}\int_{\mathbb{R}^2}F(x)\partial_{b_i}\left[\psi\left(\frac{x-b}{a}\right)\right]\rmd x
=-a^{-2}\int_{\mathbb{R}^2}F(x)(\partial_{x_i}\psi)\left(\frac{x-b}{a}\right)\rmd x\;.
$$
A similar calculation, using  $\widehat{\partial_{x_i}\psi}(\xi)=\rm i \xi_i \widehat{\psi}(\xi), \forall \xi=(\xi_1,\xi_2)$, and $\psi$ isotropic, yields:
\[
\partial_{b_i}c_F(a,b)
=A_0  a k_i \widehat{\psi}(a k)
\begin{pmatrix}- \sin(k.b)\\ \cos(k.b)\cos(\theta)\\ \cos(k.b)\sin(\theta)
\end{pmatrix}=a k_i \widehat{\psi}(a k)~n_\theta F(b)\;,
\]
where we have used:
$$n_\theta F(b)=n_\theta (\cos(k\cdot b)+n_\theta \sin(k\cdot b))=- \sin(k\cdot b) +n_\theta \cos(k\cdot b).$$
This leads to (\ref{e:wavcoeffdiff}).
\end{proof}
Remark that, more generally, if  $F(x)=A_0 \rme^{(k\cdot x+\alpha) n_{\theta}}$, ($\alpha \in \R$):
\begin{eqnarray}
\label{ext}
c_{F}(a,b)&=&A_0 a\widehat{\psi}(a k)\rme^{ (k\cdot b+\alpha) n_{\theta}}=a  \widehat{\psi}(a k) ~F (b)\\
\partial_{b_i}c_F(a,b)&=&a k_i \widehat{\psi}(a k)~n_\theta F(b)
\end{eqnarray}
The formulas~(\ref{e:wavcoeff0}) and (\ref{e:wavcoeffdiff}) can be extended in the general case of an IMMF $F$ of the form~(\ref{e:IMMF}).
\begin{proposition}\label{pro:wavcoeffIMMF}
Let $\psi$ a wavelet satisfying assumptions (W) and $F$ an IMMF with accuracy $\varepsilon$ and smoothness $\sigma>0$ of the form~(\ref{e:IMMF}). For all $(a,b)\in\mathbb{R}^*_+\times\mathbb{R}^2$, one has
\begin{enumerate}[(i)]
\item\label{pro:wavcoeffIMMFi}
\[
c_{F}(a,b)= a\widehat{\psi}(a\nabla \varphi(b))\left(A(b)\rme^{\varphi(b)n(b)}\right)+
 \varepsilon a^2 R_1(a,b)\;,
\]
with
\begin{equation}\label{e:remaind1}
\begin{array}{lll}
|R_1(a,b)| &\leq& I_1  (\sqrt{2}A(b)+1)|\nabla\varphi(b)|+a I_2 A(b)\left(\left|\nabla \varphi(b)\right|+  \sqrt{2}M\right)/2\\
&&+a I_2 M/2+a^{2} I_3 A(b)M/6\;.
\end{array}
\end{equation}
\item\label{pro:wavcoeffIMMFii}
\[
\partial_{b_i} c_F(a,b)=\partial_{b_i} \varphi(b) n_{\theta(b)}\left(a\widehat{\psi}(a\nabla \varphi(b))\right)\left(A(b)\rme^{\varphi(b)n_{\theta(b)}}\right)+\varepsilon a R_2(a,b)\;,
\]
with
\begin{equation}\label{e:remaind2}
\begin{array}{lll}
|R_2(a,b)|&\leq& A(b)\left(a\left|\nabla \varphi(b)\right|I'_2/2+a^2 M I'_3/6\right)\\
&&+ \left(\sqrt{2}A(b)+1\right)\left(|\nabla\varphi(b)|I'_1+ a M I'_2/2\right)\;.
\end{array}
\end{equation}
\end{enumerate}
\end{proposition}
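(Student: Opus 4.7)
The strategy is to compare $c_F(a,b)$ and $\partial_{b_i}c_F(a,b)$ with their analogues for a \emph{frozen} IMMF obtained from $F$ by fixing amplitude and orientation at $b$ and linearizing the phase at $b$:
$$\tilde F(x)=A(b)\,\rme^{(\varphi(b)+\nabla\varphi(b)\cdot (x-b))\,n_{\theta(b)}}\;.$$
Formula \eqref{ext}, a direct byproduct of Lemma \ref{lem:cst}, gives exactly the leading terms announced in (i) and (ii):
$$c_{\tilde F}(a,b)=a\widehat\psi(a\nabla\varphi(b))A(b)\rme^{\varphi(b)n_{\theta(b)}},\qquad \partial_{b_i}c_{\tilde F}(a,b)=\partial_{b_i}\varphi(b)\,n_{\theta(b)}\bigl(a\widehat\psi(a\nabla\varphi(b))\bigr)A(b)\rme^{\varphi(b)n_{\theta(b)}}\;.$$
The proof therefore reduces to estimating $c_F-c_{\tilde F}$ and $\partial_{b_i}(c_F-c_{\tilde F})$.

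After the change of variables $u=(x-b)/a$, each remainder becomes an integral of $(F-\tilde F)(b+au)$ against a test function: $c_F(a,b)-c_{\tilde F}(a,b)=a\int_{\mathbb R^2}(F-\tilde F)(b+au)\psi(u)\,\rmd u$ and, after an integration by parts legitimate thanks to $\psi\in W^{1,1}$, $\partial_{b_i}(c_F-c_{\tilde F})(a,b)=-\int_{\mathbb R^2}(F-\tilde F)(b+au)(\partial_i\psi)(u)\,\rmd u$. I would then split
$$F(x)-\tilde F(x)=\bigl(A(x)-A(b)\bigr)\rme^{\varphi(x)n_{\theta(x)}}+A(b)\bigl(\rme^{\varphi(x)n_{\theta(x)}}-\rme^{\tilde\varphi(x)n_{\theta(b)}}\bigr)\;,$$
with $\tilde\varphi(x)=\varphi(b)+\nabla\varphi(b)\cdot(x-b)$. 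Since $|\rme^{\alpha n_\eta}|=1$ for any real $\alpha$ and unit imaginary quaternion $n_\eta$, the first piece contributes $|A(x)-A(b)|$ in modulus; for the second, one writes $\rme^{\varphi n_{\theta(x)}}-\rme^{\tilde\varphi n_{\theta(b)}}=(\cos\varphi-\cos\tilde\varphi)+(n_{\theta(x)}-n_{\theta(b)})\sin\varphi+n_{\theta(b)}(\sin\varphi-\sin\tilde\varphi)$ and applies the elementary bounds $|\sin a-\sin b|,|\cos a-\cos b|\leq |a-b|$ together with a componentwise bound on $|n_{\theta(x)}-n_{\theta(b)}|$; the $\sqrt 2$ factors of \eqref{e:remaind1} arise when the resulting amplitude and orientation contributions are collected inside the quaternion norm.

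The three scalar differences $|A(x)-A(b)|$, $|\theta(x)-\theta(b)|$ and $|\varphi(x)-\tilde\varphi(x)|$ are now handled by Taylor expansions. For $A$ and $\theta$, the mean value theorem combined with hypothesis \eqref{e:hyp3a} bounds each by $\varepsilon\int_0^1 |\nabla\varphi(b+t(x-b))|\,|x-b|\,\rmd t$; a Lipschitz bound of the form $|\nabla\varphi(\xi)|\leq |\nabla\varphi(b)|+\sqrt 2 M|\xi-b|$ derived from \eqref{e:M} then converts this into $\varepsilon|\nabla\varphi(b)||x-b|+(\varepsilon\sqrt 2 M/2)|x-b|^2$. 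For $\varphi$, a second-order Taylor expansion combined with either \eqref{e:hyp4a} or \eqref{e:M} yields the two parallel bounds $|\varphi(x)-\tilde\varphi(x)|\leq (\varepsilon/2)|\nabla\varphi(b)||x-b|^2+O(\varepsilon M|x-b|^3)$ and $|\varphi(x)-\tilde\varphi(x)|\leq (M/2)|x-b|^2$, used respectively for the $\varepsilon$-leading and the $M$-subleading contributions to $R_1$.

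Substituting $x=b+au$ replaces $|x-b|^\alpha$ by $a^\alpha|u|^\alpha$, and integrating against $|\psi(u)|$ converts $|u|^\alpha$ into $I_\alpha$; collecting the amplitude, orientation, and phase contributions exactly produces $|c_F(a,b)-c_{\tilde F}(a,b)|\leq \varepsilon a^2 R_1(a,b)$ with $R_1$ as in \eqref{e:remaind1}. The same computation with $\partial_i\psi$ in place of $a\psi$ replaces $I_\alpha$ by $I'_\alpha$ and lowers the overall prefactor from $\varepsilon a^2$ to $\varepsilon a$, giving \eqref{e:remaind2}. The main obstacle is purely the bookkeeping: tracking which Taylor piece lands in which $I_\alpha$ (resp.\ $I'_\alpha$) with the precise numerical constant announced, and making sure that the final bound involves $|\nabla\varphi(b)|$ rather than a global sup-norm, which is exactly what the Lipschitz estimate derived from \eqref{e:M} secures.
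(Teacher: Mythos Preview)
Your proposal is correct and follows essentially the same strategy as the paper: freeze $A$ and $\theta$ at $b$, linearize $\varphi$ at $b$, invoke \eqref{ext} for the leading terms, and control the three resulting differences (amplitude, orientation, phase linearization) by Taylor estimates combined with the IMMF hypotheses, then integrate against $|\psi|$ or $|\nabla\psi|$ to produce the moments $I_\alpha$, $I'_\alpha$. The paper organizes the splitting slightly differently---it separates the orientation difference $\rme^{\varphi n_{\theta(x)}}-\rme^{\varphi n_{\theta(b)}}$ first and then handles the phase remainder via $|\rme^{sn}-1|\le|s|$, rather than your $\cos/\sin$ decomposition---but the two routes are equivalent and yield the same bounds. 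Two small remarks: the step you call ``integration by parts'' is really just differentiation under the integral sign (valid since $F\in L^\infty$ and $\partial_i\psi\in L^1$); and your second Taylor bound $|\varphi-\tilde\varphi|\le (M/2)|x-b|^2$ is not actually needed, as the $\varepsilon$-weighted bound already accounts for all terms in $R_1$.
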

\begin{remark}
As a consequence of this proposition, since in the isotropic case $c_{F}(a,b)$ and $c_f^{(M)}(a,\alpha,b)$ are related by~(\ref{e:wavcoeff:monogenic:iso}), the above proposition specifies the results stated in~\cite{olhede:metikas:2009a} and~\cite{olhede:metikas:2009b}. Notably, it gives a bound of the approximation error.
\end{remark}
\begin{proof}
See Section~\ref{s:proofs11}.
\end{proof}
We now define the two following Clifford vectors:
\begin{eqnarray}\label{e:L1}
\Lambda_1(a,b) & = & ~ \partial_{b_1} c_{F}(a,b)\times (c_{F}(a,b))^{-1}\;,\\
\nonumber
\Lambda_2(a,b) &= &~\partial_{b_2} c_{F}(a,b)\times (c_{F}(a,b))^{-1}\;.
\end{eqnarray}
These two vectors will provide an approximation of the vectors $\partial_{b_i}\varphi(b) n_{\theta(b)}$ for an IMMF $F$ of the form~(\ref{e:IMMF}). More precisely, we can state the following result:
\begin{theorem}\label{th:main1}
Let $F$ be an IMMF with accuracy $\varepsilon>0$ and smoothness $\sigma>0$ of the form~(\ref{e:IMMF}).
Assume that we are given $\psi$ a wavelet satisfying assumptions (W) and consider $(a,b)\in \R^*_+\times \mathbb{R}^2$ such that

\[
|c_F(a,b)|\geq \varepsilon^\nu\mbox{ for some }\nu\in (0,1/2)\;.
\]
Then for $i=1,2$:
\begin{equation}\label{e:thmain1}
|\Lambda_i(a,b)-\partial_{b_i}\varphi(b)n_{\theta(b)}|\leq \varepsilon^{1-\nu}a~\left(|R_2(a,b)|+ a |R_1(a,b)|~|\nabla\varphi(b)|\right)
\end{equation}
where $R_1$, $R_2$ have been defined respectively in~(\ref{e:remaind1}) and (\ref{e:remaind2}).
In addition for any $a_0>0$, there exists some $\varepsilon_0>0$ such that, for all $(a,b)\in (0,a_0)\times \mathbb{R}^2$ and any $0<\varepsilon\leq \varepsilon_0$:
 \begin{equation}\label{e:thmain1bis}
|\Lambda_i(a,b)-\partial_{b_i}\varphi(b)n_{\theta(b)}|\leq \varepsilon^{\nu}\end{equation}
\end{theorem}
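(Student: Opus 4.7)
The plan is to treat the theorem as a perturbation calculation on top of Proposition~\ref{pro:wavcoeffIMMF}. Set
\[
P(a,b)=a\widehat{\psi}(a\nabla\varphi(b))\,A(b)\rme^{\varphi(b)n_{\theta(b)}},\qquad Q(a,b)=\partial_{b_i}\varphi(b)\,n_{\theta(b)}\,P(a,b),
\]
so that Proposition~\ref{pro:wavcoeffIMMF} reads $c_F=P+\varepsilon a^2 R_1$ and $\partial_{b_i}c_F=Q+\varepsilon a R_2$. The first step is the algebraic observation that in $\mathbb{H}$ the unit vector $n_{\theta(b)}$ commutes with both the scalar $a\widehat{\psi}(a\nabla\varphi(b))A(b)$ and with $\rme^{\varphi(b)n_{\theta(b)}}$ (since $n_{\theta(b)}$ is the imaginary unit generating the latter one-parameter subgroup). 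Consequently $Q=\partial_{b_i}\varphi(b)\,n_{\theta(b)}\cdot P$ exactly, with no remainder from the non-commutativity of quaternions.

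Next I would use the hypothesis $|c_F(a,b)|\geq\varepsilon^{\nu}>0$ to guarantee that $c_F(a,b)$ is invertible in $\mathbb{H}$ with $|c_F(a,b)^{-1}|\leq \varepsilon^{-\nu}$, and then factor the error as
\[
\Lambda_i(a,b)-\partial_{b_i}\varphi(b)\,n_{\theta(b)}=\bigl(\partial_{b_i}c_F(a,b)-\partial_{b_i}\varphi(b)\,n_{\theta(b)}\,c_F(a,b)\bigr)\,c_F(a,b)^{-1}.
\]
Using the two expansions from Proposition~\ref{pro:wavcoeffIMMF} and the identity $Q=\partial_{b_i}\varphi(b)\,n_{\theta(b)}P$ just noted, the numerator collapses to $\varepsilon a R_2(a,b)-\varepsilon a^2\,\partial_{b_i}\varphi(b)\,n_{\theta(b)}\,R_1(a,b)$. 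Submultiplicativity of the quaternionic norm, together with $|\partial_{b_i}\varphi(b)|\leq|\nabla\varphi(b)|$ and $|n_{\theta(b)}|=1$, then delivers
\[
\bigl|\Lambda_i(a,b)-\partial_{b_i}\varphi(b)\,n_{\theta(b)}\bigr|\leq \varepsilon^{1-\nu}\,a\bigl(|R_2(a,b)|+a\,|R_1(a,b)|\,|\nabla\varphi(b)|\bigr),
\]
which is exactly~(\ref{e:thmain1}).

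For~(\ref{e:thmain1bis}) I would combine this with uniform bounds on the remainders. The explicit formulas~(\ref{e:remaind1}) and~(\ref{e:remaind2}) show that $|R_1(a,b)|$ and $|R_2(a,b)|$ are polynomials in $a$ of degree $\leq 2$, whose coefficients are controlled by $\|A\|_{L^\infty}$, the uniform upper bound on $|\nabla\varphi|$ coming from~(\ref{e:hyp1a}), the constant $M$ from~(\ref{e:M}), and the finite wavelet moments $I_\alpha,I'_\alpha$ of assumption~(W). Hence for $a\in(0,a_0)$ there is a constant $C=C(a_0,\psi,F)$, independent of $b$ and $\varepsilon$, such that $a(|R_2|+a|R_1||\nabla\varphi|)\leq C$. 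The right-hand side of~(\ref{e:thmain1}) is then bounded by $C\varepsilon^{1-\nu}$, and since $\nu\in(0,1/2)$ one has $1-2\nu>0$, so choosing $\varepsilon_0$ with $C\varepsilon_0^{1-2\nu}\leq 1$ yields $C\varepsilon^{1-\nu}\leq\varepsilon^{\nu}$ for every $\varepsilon\leq\varepsilon_0$.

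The only delicate point is the algebraic cancellation $Q=\partial_{b_i}\varphi\,n_\theta P$ in the non-commutative algebra $\mathbb{H}$; everything else is a direct perturbation argument. Once that commutation is checked, the rest reduces to a single application of the identity $AB^{-1}-ab^{-1}=(A-aB/b)\cdot b\cdot(Bb)^{-1}\cdots$ rewritten in the convenient form above, followed by the uniform polynomial-in-$a$ bounds on the remainders.
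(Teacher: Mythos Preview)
Your argument is correct and follows essentially the same route as the paper: the same factorization $\Lambda_i-\partial_{b_i}\varphi\,n_\theta=(\partial_{b_i}c_F-\partial_{b_i}\varphi\,n_\theta\,c_F)\,c_F^{-1}$, the same use of the two remainder bounds from Proposition~\ref{pro:wavcoeffIMMF}, and the same choice of $\varepsilon_0$ via $1-2\nu>0$. Your explicit check that $n_{\theta(b)}$ commutes with $P(a,b)$ is a welcome clarification (the paper leaves it implicit), but otherwise the two proofs coincide.
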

\begin{proof}
See Section~\ref{s:proofs12}.
\end{proof}
Since $\cos(\varphi(x))=\cos(-\varphi(x))$, there is an ambiguity in the definition of the instantaneous frequency $\nabla\varphi$ since $\varphi$ can be replaced by $-\varphi$. To solve this problem, practitioners usually assume that the first component of the instantaneous frequency is positive (see~\cite{murray:2012} and references therein for more details). Under this additional assumption, our method yields an estimate of the amplitude and of the instantaneous frequency
of an IMMF:
\begin{proposition}\label{pro:main1}
The notations and assumptions are those of Theorem~\ref{th:main1}. In addition, we assume that:
\[
\forall x\in\mathbb{R}^2,\,\partial_{x_1}\varphi(x)>0\;.
\]
Then for $i=1,2$, and $(a,b)\in (0,a_0)\times\mathbb{R}$:
\begin{equation}\label{e:promain1b}
|\partial_{b_1}\varphi(b)-|\Lambda_1(a,b)||\leq \varepsilon^{\nu}\;.
\end{equation}
and
\begin{equation}\label{e:promain1c}
|\partial_{b_2}\varphi(b)-|\Lambda_2(a,b)|~\mathrm{sgn}(\mathrm{Re}(\partial_{b_1} c_F(a,b)~\overline{\partial_{b_2} c_F(a,b)}))|\leq \varepsilon^{\nu}\;.
\end{equation}
\end{proposition}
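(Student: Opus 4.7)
The plan is to deduce Proposition~\ref{pro:main1} directly from Theorem~\ref{th:main1}—specifically from the bound~(\ref{e:thmain1bis})—together with the reverse triangle inequality in $\mathbb{H}$ and a sign determination based on the real part of the quaternionic product $\partial_{b_1}c_F\,\overline{\partial_{b_2}c_F}$. The first estimate~(\ref{e:promain1b}) is essentially immediate: since $\partial_{b_1}\varphi(b)>0$ by assumption and $|n_{\theta(b)}|=1$, one has $|\partial_{b_1}\varphi(b)\,n_{\theta(b)}|=\partial_{b_1}\varphi(b)$, and the reverse triangle inequality applied to~(\ref{e:thmain1bis}) with $i=1$ yields
\[
\bigl||\Lambda_1(a,b)|-\partial_{b_1}\varphi(b)\bigr|\;\leq\;|\Lambda_1(a,b)-\partial_{b_1}\varphi(b)\,n_{\theta(b)}|\;\leq\;\varepsilon^{\nu}.
\]

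For~(\ref{e:promain1c}), the main task is to show that, for sufficiently small $\varepsilon$, the sign factor there correctly recovers $\mathrm{sgn}(\partial_{b_2}\varphi(b))$. The definition~(\ref{e:L1}) gives $\partial_{b_i}c_F=\Lambda_i\,c_F$, and since $c_F\,\overline{c_F}=|c_F|^2$ is a positive real scalar (and thus commutes with every quaternion), I would write
\[
\partial_{b_1}c_F\,\overline{\partial_{b_2}c_F}\;=\;\Lambda_1\,c_F\,\overline{c_F}\,\overline{\Lambda_2}\;=\;|c_F|^2\,\Lambda_1\,\overline{\Lambda_2}.
\]
Substituting $\Lambda_i=\partial_{b_i}\varphi(b)\,n_{\theta(b)}+E_i$, with $|E_i|\leq\varepsilon^{\nu}$ by~(\ref{e:thmain1bis}), and exploiting $n_{\theta(b)}\,\overline{n_{\theta(b)}}=1$ together with the uniform bounds on $\partial_{b_i}\varphi$ supplied by~(\ref{e:hyp1a}), one obtains
\[
\mathrm{Re}\bigl(\partial_{b_1}c_F\,\overline{\partial_{b_2}c_F}\bigr)\;=\;|c_F|^2\bigl(\partial_{b_1}\varphi(b)\,\partial_{b_2}\varphi(b)+O(\varepsilon^{\nu})\bigr).
\]
Because~(\ref{e:hyp1a}) forces $|\partial_{b_i}\varphi(b)|$ to be bounded below by a positive constant, shrinking $\varepsilon_0$ from Theorem~\ref{th:main1} if necessary ensures that the $O(\varepsilon^{\nu})$ remainder cannot flip the sign of $\partial_{b_1}\varphi\,\partial_{b_2}\varphi$; since $\partial_{b_1}\varphi(b)>0$, the sign factor in~(\ref{e:promain1c}) then equals $\mathrm{sgn}(\partial_{b_2}\varphi(b))$. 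The estimate~(\ref{e:promain1c}) now follows from the reverse triangle inequality applied to~(\ref{e:thmain1bis}) for $i=2$:
\[
\bigl||\Lambda_2(a,b)|\,\mathrm{sgn}(\partial_{b_2}\varphi(b))-\partial_{b_2}\varphi(b)\bigr|\;=\;\bigl||\Lambda_2(a,b)|-|\partial_{b_2}\varphi(b)\,n_{\theta(b)}|\bigr|\;\leq\;\varepsilon^{\nu}.
\]

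The main delicacy is the non-commutative quaternionic algebra underlying the sign computation: the order of multiplication must be respected throughout, and one must use that only the scalar $|c_F|^2$ can be pulled out freely. Beyond that, the argument is a direct quantitative consequence of~(\ref{e:thmain1bis}) and~(\ref{e:hyp1a}), with $\varepsilon_0$ possibly reduced so that the leading term $|c_F|^2\,\partial_{b_1}\varphi\,\partial_{b_2}\varphi$ dominates the $O(\varepsilon^{\nu})$ correction uniformly in $b$.
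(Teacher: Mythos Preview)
Your proposal is correct and follows essentially the same approach as the paper's own proof: the paper isolates the sign computation as a separate lemma showing that $\mathrm{Re}(\Lambda_1\overline{\Lambda_2})=\mathrm{Re}(\partial_{b_1}c_F\,\overline{\partial_{b_2}c_F})\,|c_F|^{-2}$ and then expands $\Lambda_1\overline{\Lambda_2}=\partial_{b_1}\varphi\,\partial_{b_2}\varphi+O(\varepsilon^{\nu})$ via Theorem~\ref{th:main1}, which is exactly your computation (you just arrive at the same identity from the other direction, starting from $\partial_{b_i}c_F=\Lambda_i c_F$ and using $c_F\overline{c_F}=|c_F|^2$). The final step via the reverse triangle inequality is identical.
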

\begin{proof}
See Section~\ref{s:proofs2}.
\end{proof}
\section{Decomposition of a multi--component image into spectral lines }\label{s:synchro}
In this section we will consider a more general context, where our model is now a superposition of several IMMFs (introduced in Section~\ref{s:spectralline}), assumed to be well separated, as defined in the following definition \ref{def:supIMMF}.
We will show in Theorem~\ref{th:main2} how such functions can be decomposed into monogenic modes, using synchrosqueezing. Thereafter, for each component, the associated amplitude and instantaneous frequency will be estimated using the results of Section~\ref{s:spectralline}.
\begin{definition}\label{def:supIMMF}
A function $F$ defined from $\mathbb{R}^2$ to $\mathbb{H}$ is said to be a superposition of well separated Intrinsic Monogenic Mode Components with smoothness $\sigma>0$ up to accuracy $\varepsilon>0$ and with separation $d>0$, if there exists a finite integer $L$ such that
\begin{equation}\label{e:supIMMF}
F(x)=\sum_{\ell=1}^L F_\ell(x)\;,
\end{equation}
where all the $F_\ell$ are IMMFs with accuracy $\varepsilon$ and smoothness $\sigma_\ell\geq \sigma>0$ of the form (\ref{e:IMMF}): $F_\ell(x)=A_{\ell}(x)~\rme^{\varphi_{\ell}(x)n_{\theta_{\ell}(x)}}$, and moreover satisfy for any $x$, $\ell>\ell'$ and $i\in\{1,2\}$:
\begin{equation}\label{e:sep}
\left|\partial_{x_i}\varphi_\ell(x)\right|>\left|\partial_{x_i}\varphi_{\ell'}(x)\right|\;,
\end{equation}
and
\begin{equation}\label{e:sepbis}
\left|\partial_{x_i}\varphi_\ell(x)n_{\theta_\ell(x)}-\partial_{x_i}\varphi_{\ell'}(x)n_{\theta_{\ell'}(x)}\right|\geq d~\left[ \left|\partial_{x_i}\varphi_{\ell}(x)\right|+\left|\partial_{x_i}\varphi_{\ell'}(x)\right|\right]\;.
\end{equation}
We denote by $\mathcal{A}_{\varepsilon,\sigma,d}$ the class of all functions $F$ satisfying these conditions.
\end{definition}
We now define the monogenic synchrosqueezing transform (MSST) of a function $F$ belonging to $\mathcal{A}_{\varepsilon,\sigma,d}$.
\begin{definition}\label{d:SST}
Assume that we are given $\psi$ a wavelet satisfying assumptions (W) such that
\[
\mathrm{supp}(\widehat{\psi})\subset \{\xi\in\mathbb{R}^2~;~1-\Delta\leq|\xi|\leq 1+\Delta\}\;,
\]
with
\begin{equation}\label{e:Delta}
\Delta< \frac{d}{2(1+d)}\;.
\end{equation}
Consider $h\in\mathcal{C}^\infty_c(\mathbb{R},\mathbb{R})$ such that $\int_{\mathbb{R}} h(x)\rmd x=1$. Let $\delta>0$, $\nu\in (0,1)$, and $\varepsilon >0$. For $F\in \mathcal{A}_{\varepsilon,\sigma,d}$, the monogenic synchrosqueezing transform of $F$ is defined for any $(b,k,n)\in \mathbb{R}^2\times \mathbb{R}^2\times \mathbb{S}^1$ ($ \mathbb{S}^1$ being the unit sphere of $\R^2$) by
\begin{equation}
\label{e:defmsst}
S_{F,\varepsilon}^{\delta,\nu}(b,k,n)=\int_{A_{\varepsilon,F}(b)} c_{F}(a,b)\frac{1}{\delta^2}h\left(\frac{k_1-{\rm Re} (\overline{n}~\Lambda_1(a,b))}{\delta}\right)h\left(\frac{k_2 -{\rm Re} (\overline{n}~\Lambda_2(a,b))}{\delta}\right)\frac{\rmd a}{a^{2}}\;,
\end{equation}
with
\[
A_{\varepsilon,F}(b)=\{a\in \R_+~;\,|c_F(a,b)|>\varepsilon^{\nu}\}\;,
\]
and where $\Lambda_1(a,b)$, $\Lambda_2(a,b)$ are defined by equation~(\ref{e:L1}).
\end{definition}
We can now state our main result:
\begin{theorem}\label{th:main2}
The notations are those of Definition~\ref{d:SST}. Let $F\in\mathcal{A}_{\varepsilon,\sigma,d}$ and $\nu\in (0,1/(2+4/\sigma))$. Provided that $\varepsilon>0$ is sufficiently small, the following assertions hold:
\begin{itemize}
\item $|c_F(a,b)|>\varepsilon^{\nu}$ only if there exists some $\ell$ such that
\begin{equation}\label{e:Zl}
(a,b)\in Z_\ell=\left\{(a,b),\, \left|a|\nabla\varphi_\ell(b)|-1\right|< \Delta\right\}\;.
\end{equation}
\item  For each $\ell=\{1,\cdots, L\}$ and each pair $(a,b)\in Z_\ell$ for which $|c_F(a,b)|> \varepsilon^\nu$, one has for $i=1,2$ and for some $C>0$:
\[
|\Lambda_i(a,b)-\partial_i\varphi_\ell(b)n_{\theta_{\ell}(b)}|\leq C\varepsilon^{\nu}\;.
\]
\item Moreover for any $\ell\in\{1,\cdots,L\}$, there exists $C>0$, such that for any $b\in\mathbb{R}^2$
\[
\lim_{\delta\rightarrow 0}\left|\frac{1}{2\pi\tilde C_\psi}\int_{\mathbb{S}^1}\int_{{\cal B}_{\ell}(\varepsilon^{\nu},n,b)} S^\delta_{f,\varepsilon}(b,k,n)\rmd k\rmd n-A_\ell(b)\rme^{\varphi_\ell(b) n_{\theta_\ell(b)}}\right|\leq C\varepsilon^{\nu}\;.
\]
where we denote ${\cal B}_{\ell}(\varepsilon^{\nu},n,b)=\{k\in\R^2~;\,\max_i|k_i n-\partial_{b_i}\varphi_\ell(b) n_{\theta_{\ell}(b)}|\leq \varepsilon^{\nu}\}$, and where $\tilde C_\psi$ was introduced in~(\ref{e:synchro:id:iso}).
\end{itemize}
\end{theorem}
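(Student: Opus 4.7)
The proof splits into three steps corresponding to the three bullets, relying mainly on the single-mode analysis of Section~\ref{s:spectralline}.

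\emph{Concentration.} By linearity $c_F(a,b)=\sum_\ell c_{F_\ell}(a,b)$, and Proposition~\ref{pro:wavcoeffIMMF}(i) gives
\[
c_{F_\ell}(a,b)=a\widehat\psi(a\nabla\varphi_\ell(b))\,A_\ell(b)\rme^{\varphi_\ell(b)n_{\theta_\ell(b)}}+\varepsilon a^2 R_{1,\ell}(a,b).
\]
The support condition on $\widehat\psi$ forces the leading term to vanish outside $Z_\ell$, so on the complement of $\bigcup_\ell Z_\ell$ only the remainders survive and $|c_F(a,b)|\lesssim \varepsilon\sum_\ell a^2|R_{1,\ell}(a,b)|$. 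For $a$ bounded this is $O(\varepsilon)$, hence below $\varepsilon^\nu$ for small $\varepsilon$; for large $a$ the Besov bound~(\ref{e:remc}) controls the pointwise coefficient uniformly in $b$, the exponent choice $\nu<1/(2+4/\sigma)$ being exactly what is required to keep the tail below $\varepsilon^\nu$ for a threshold $a_0$ independent of $\varepsilon$. Pairwise disjointness of the $Z_\ell$ follows from the separation hypothesis~(\ref{e:sepbis}) combined with $\Delta<d/(2(1+d))$, which makes $|\nabla\varphi_\ell(b)|$ and $|\nabla\varphi_{\ell'}(b)|$ incompatible with a common $a\in((1-\Delta)/|\nabla\varphi_\ell|,(1+\Delta)/|\nabla\varphi_\ell|)$.

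\emph{Frequency estimate.} Fix $(a,b)\in Z_\ell$ with $|c_F(a,b)|>\varepsilon^\nu$. By the previous step $(a,b)\notin Z_{\ell'}$ for $\ell'\neq\ell$, so Proposition~\ref{pro:wavcoeffIMMF}(i)--(ii) applied to $F_{\ell'}$ yields $|c_{F_{\ell'}}(a,b)|\lesssim \varepsilon a^2$ and $|\partial_{b_i}c_{F_{\ell'}}(a,b)|\lesssim \varepsilon a$. Therefore $c_F=c_{F_\ell}+O(\varepsilon)$, $\partial_{b_i}c_F=\partial_{b_i}c_{F_\ell}+O(\varepsilon)$, and the threshold forces $|c_{F_\ell}(a,b)|\gtrsim \varepsilon^\nu$ for $\varepsilon$ small. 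Theorem~\ref{th:main1} applied to the single IMMF $F_\ell$ controls the corresponding quantity $\Lambda_i^{(\ell)}$, and the expansion
\[
\Lambda_i(a,b)=\frac{\partial_{b_i}c_{F_\ell}(a,b)+O(\varepsilon)}{c_{F_\ell}(a,b)+O(\varepsilon)}
\]
combined with the lower bound on $|c_{F_\ell}|$ transfers the estimate to $\Lambda_i$ with an additional error of order $\varepsilon^{1-\nu}\leq\varepsilon^\nu$ (using $\nu<1/2$).

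\emph{Synchrosqueezing reconstruction.} The starting point is~(\ref{e:synchro:id:iso}), which yields
\[
\frac{2\pi}{\widetilde C_\psi}\int_0^\infty c_{F_\ell}(a,b)\,\frac{\rmd a}{a^2}=A_\ell(b)\rme^{\varphi_\ell(b)n_{\theta_\ell(b)}}.
\]
As $\delta\to 0$, the mollifier $\delta^{-2}h(\cdot/\delta)h(\cdot/\delta)$ converges weakly to a Dirac mass at $k_0(a,b,n)=(\mathrm{Re}(\bar n\Lambda_1(a,b)),\mathrm{Re}(\bar n\Lambda_2(a,b)))$. Dominated convergence (justified via~(\ref{besov}) and~(\ref{e:remb--})) allows passage of this limit under the $a$-integral, reducing the $k$-integration to the indicator $\1\{k_0(a,b,n)\in\mathcal{B}_\ell(\varepsilon^\nu,n,b)\}$. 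By the quaternionic identity $\mathrm{Re}(\bar n\,n_\theta)=\langle n,n_\theta\rangle$ and Step~2, this indicator is non-negligible only for the coupling $(a,b)\in Z_\ell$ and $n$ close to $n_{\theta_\ell(b)}$; contributions from $Z_{\ell'}$, $\ell'\neq\ell$, vanish up to $O(\varepsilon^\nu)$ by the separation. Integrating over $n\in\mathbb{S}^1$ and combining with the prefactor $1/(2\pi\widetilde C_\psi)$ then reproduces the radial reconstruction above, yielding $A_\ell(b)\rme^{\varphi_\ell(b)n_{\theta_\ell(b)}}$ up to $O(\varepsilon^\nu)$.

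The main obstacle lies in this third step, where one must carefully interleave (i) the $\delta\to 0$ Dirac limit combined with the $\mathcal{B}_\ell$-thresholding, (ii) the angular integration over $\mathbb{S}^1$, and (iii) the normalization $1/(2\pi\widetilde C_\psi)$, while propagating the $O(\varepsilon^\nu)$ errors from Steps~1--2. Verifying that the measure on $\mathbb{S}^1$ and the size of $\mathcal{B}_\ell(\varepsilon^\nu,n,b)$ combine with the prefactor so as to produce exactly the constant of~(\ref{e:synchro:id:iso}) is the most delicate bookkeeping of the argument; Steps~1 and~2 themselves are routine consequences of Proposition~\ref{pro:wavcoeffIMMF} and Theorem~\ref{th:main1}, paralleling the one-dimensional analysis of~\cite{daubechies:2011}.
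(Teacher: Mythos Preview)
Your outline follows the paper's strategy: concentration via Proposition~\ref{pro:wavcoeffIMMF} and the support of $\widehat\psi$ (the paper packages this as Lemmas~\ref{lem:interm3}--\ref{lem:interm2b}), a frequency estimate by adapting Theorem~\ref{th:main1} to superpositions (Lemma~\ref{lem:interm4c}), and reconstruction via Fubini, dominated convergence, and the $\delta\to 0$ limit. Your identification of Step~3 as the delicate part is correct.

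There is, however, a genuine error in Step~1. You assert that the scale cutoff $a_0$ can be taken \emph{independent of $\varepsilon$}, and that the exponent condition $\nu<1/(2+4/\sigma)$ is ``exactly what is required to keep the tail below $\varepsilon^\nu$'' for such a fixed $a_0$. This is backwards. The Besov estimate~(\ref{e:remc}) bounds the \emph{integral} $\int_{a_0}^\infty|c_F(a,b)|\,a^{-2}\rmd a$ by $a_0^{-\sigma}\|F\|_{\dot B^{-\sigma}_{\infty,1}}$, not the pointwise coefficient; and with $a_0$ fixed this is a fixed constant, never $\le\varepsilon^\nu$ for small $\varepsilon$. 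In the paper's proof the cutoff is $\varepsilon$-dependent and must satisfy two competing constraints (see~(\ref{e:cond:varepsilon2})):
\[
a_0\gtrsim\varepsilon^{-\nu/\sigma}\quad\text{(Besov tail $\le\varepsilon^\nu$)},\qquad
a_0\lesssim\varepsilon^{(2\nu-1)/4}\quad\text{(so that }\varepsilon^{1-\nu}\!\!\sup_{(0,a_0)}\!a(\Gamma_2+a|\nabla\varphi_\ell|\Gamma_1)\le\varepsilon^\nu\text{)}.
\]
These are compatible iff $\nu/\sigma<(1-2\nu)/4$, which is exactly $\nu<1/(2+4/\sigma)$. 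Thus the role of the exponent hypothesis is to allow $a_0(\varepsilon)\to\infty$ slowly enough that the remainder bounds on $(0,a_0)$ still hold; your argument as written does not use it correctly.

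A secondary point on Step~3: you describe the $\mathbb{S}^1$-integration as \emph{localizing} near $n_{\theta_\ell(b)}$. The paper's computation is different: it shows that once $\max_i|\Lambda_i(a,b)-\partial_{b_i}\varphi_\ell(b)n_{\theta_\ell(b)}|<\varepsilon^\nu$, the inner $k$-integral over $\mathcal{B}_\ell(\varepsilon^\nu,n,b)$ equals $1$ for every $n\in\mathbb{S}^1$, so the $n$-integral contributes a clean factor $2\pi$ (equation~(\ref{e:beforeinclusion})), which then cancels the $2\pi$ in the prefactor. Your localization picture would have to reproduce this constant by a different mechanism; since you already flag this as the most delicate bookkeeping, you should carry it out explicitly rather than leave it heuristic.
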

\begin{proof}
See Section~\ref{s:proofs:th:main2}.
\end{proof}

\section{Implementation and numerical experiments}\label{s:numerical1}

This section aims to illustrate the efficiency of the monogenic synchrosqueezing transform, and its potential applications.
We will first describe the computation of the discrete MSST from a $N\times N$ discrete image, then we will show some examples on artificial and real images.

\subsection{The discrete monogenic synchrosqueezing transform}
The first step consists in the discretization of the variables, therefore one builds discrete sets $\mathcal{A}$, $\mathcal{K}$, $\mathcal{O}$ for the scales $a$, the normalized frequencies $(k_1,k_2)$, and the orientations $\theta$ respectively, the frequency resolution in $k$ being denoted by $\Delta_k$. Note that one usually chooses a logarithmic scale for $a$ and $k$, e.g. $\mathcal{A}= \{2^{j/n_v}\}_{j=0\cdots n_a-1},$ so that we have $n_v$ coefficients per octave.
Then, we compute the discrete monogenic wavelet transform $c_F(a,b)$,
and the estimations of the instantaneous frequencies $\Lambda_1(a,b)$ and $\Lambda_2(a,b)$ defined in equation (\ref{e:L1})
by writing for $i=1,2$:
$$\partial_{b_i} c_f(a,b) = \int_{\mathbb{R}^2} f(x) \partial_{b_i} \psi_{a,b}(x)~ \rmd x ,$$
the other components $\partial_{b_i} c_{\mathcal{R}_1f}(a,b)$ and $\partial_{b_i} c_{\mathcal{R}_2f}(a,b)$ being computed in the same manner.
The monogenic synchrosqueezing transform consists in a partial reallocation of the monogenic wavelet transform according to space, frequency and orientation parameters. Given a threshold $\gamma$, and for all  $a \in \mathcal{A},(k_1,k_2)\in\mathcal{K}^2, \theta \in \mathcal{O}$, one simply writes
\begin{equation}
\label{e:dismsst}
S_{F,\gamma}(b,k_{1},k_{2},n_\theta) = \frac{\log(2)}{n_v} \sum_{a\in\mathcal{A} \mbox{ s.t. } \left\{
\begin{array}{l}
|c_F(a,b)|>\gamma \\
\left| k_1-{\rm Re} (\Lambda_1(a,b)~\overline{n}_{\theta}) \right| \le \frac{\Delta_{k_1}}{2} \\
\left| k_2-{\rm Re} (\Lambda_2(a,b)~\overline{n}_{\theta}) \right| \le \frac{\Delta_{k_2}}{2}
\end{array}
\right.
}
\frac{c_F(a,b) }{a}.
\end{equation}
The term $\frac{\log(2)}{a n_v}$ comes from the measure $\frac{da}{a^2}$ in equation (\ref{e:defmsst}), as we deal with logarithmic scales.
Note that if we use a logarithmic scale for the frequencies like in \cite{daubechies:2011}, the resolution $\Delta_k$ depends on the value $k$.

This discrete MSST is relatively easy and cheap to compute, but it is not of great interest in practice because of the large number of variables which prevents both easy representation and fast processing (i.e. decomposition, demodulation). To decrease this number of variables, one can remark that if a superposition of IMMFs satisfies the separation condition of equation (\ref{e:sep}), then for any $\ell>\ell'$ it also satisfies $|\nabla \varphi_{\ell}(x) | > |\nabla \varphi_{\ell'}(x) | ,$ so that the IMMFs are also separated in terms of the norm of the frequency vector.
This leads us to define the following discrete isotropic MSST for all $a \in \mathcal{A}$ and $k\in\mathcal{K}$:
\begin{equation}
\label{e:dismsst}
S_{F,\gamma}(b,k) = \frac{\log(2)}{n_v} \sum_{a\in\mathcal{A} \mbox{ s.t. } \left\{
\begin{array}{l}
|c_F(a,b)|>\gamma \\
\left| k- \sqrt{|\Lambda_1(a,b)|^2 + |\Lambda_2(a,b)|^2} \right| \le \frac{\Delta_k}{2} \\
\end{array}
\right.
}
\frac{c_F(a,b) }{a}.
\end{equation}
Intuitively, $S_{F,\gamma}(b,k_p)$ contains all the coefficients $c_F(a,b)$ whose ``instantaneous isotropic frequency`` $ \sqrt{|\Lambda_1(a,b)|^2 + |\Lambda_2(a,b)|^2}$ is around the value $k$.
The latter isotropic MSST can be particularly useful when visualizing the time-frequency distribution, as it only depends on three scalar parameters $(b_1,b_2,k)$, that is by far easier to represent.
Hence, this version will be used in the experiments hereafter. Additionally we will use the Morlet wavelet with parameters $\mu>0$ and $\sigma>0$, defined in the Fourier domain as follows
\begin{equation}\label{e:morlet}
\hat\psi_{\mu,\sigma}(\xi)=\exp(-\pi^2\sigma(|\xi|-\mu)^2)\;.
\end{equation}
\subsection{Representing a synthetic 3-components signal}
This section illustrates the interest in using the MSST to represent multicomponent signals in 2 dimensions. Let us first define the synthetic 3-components test-signal $f = f_1 + f_2 + f_3$ with
\begin{equation}
\label{e:testsig}
\left\{
\begin{array}{lll}
f_1(x,y) &=& e^{-10((x-0.5)^2+(y-0.5).^2))} \sin(10\pi(x^2+y^2+2(x+0.2y)) \\
f_2(x,y) &=& 1.2 \sin(40\pi(x+y))\\
f_3(x,y) &=&  \cos(2\pi(70x+20x^2+50y-20y^2-41xy))
\end{array}
\right.
\end{equation}
We compute both its monogenic isotropic wavelet transform $c_F$ and its isotropic MSST $S_F$. We then aim at representing $|c_F|$ (resp. $|S_F|$), which depends on the three scalar parameters $x$, $y$ and $a$ (resp. $k$). We will here propose two distinct visualizations: either a 3-dimensional scatter ``density'' plot, or a 2-dimensional representation for $x$ or $y$ fixed, which looks similar to a 1D continuous wavelet representation. The following Figure \ref{fig:visu} shows these different representations for the wavelet and the MSST of the signal (\ref{e:testsig}). We see how the MSST sharpens the time-scale representation, making it more readable.

\begin{figure}[H]
  \begin{minipage}[b]{0.24\linewidth}
    \centerline{
    \includegraphics[width=1.0\linewidth]{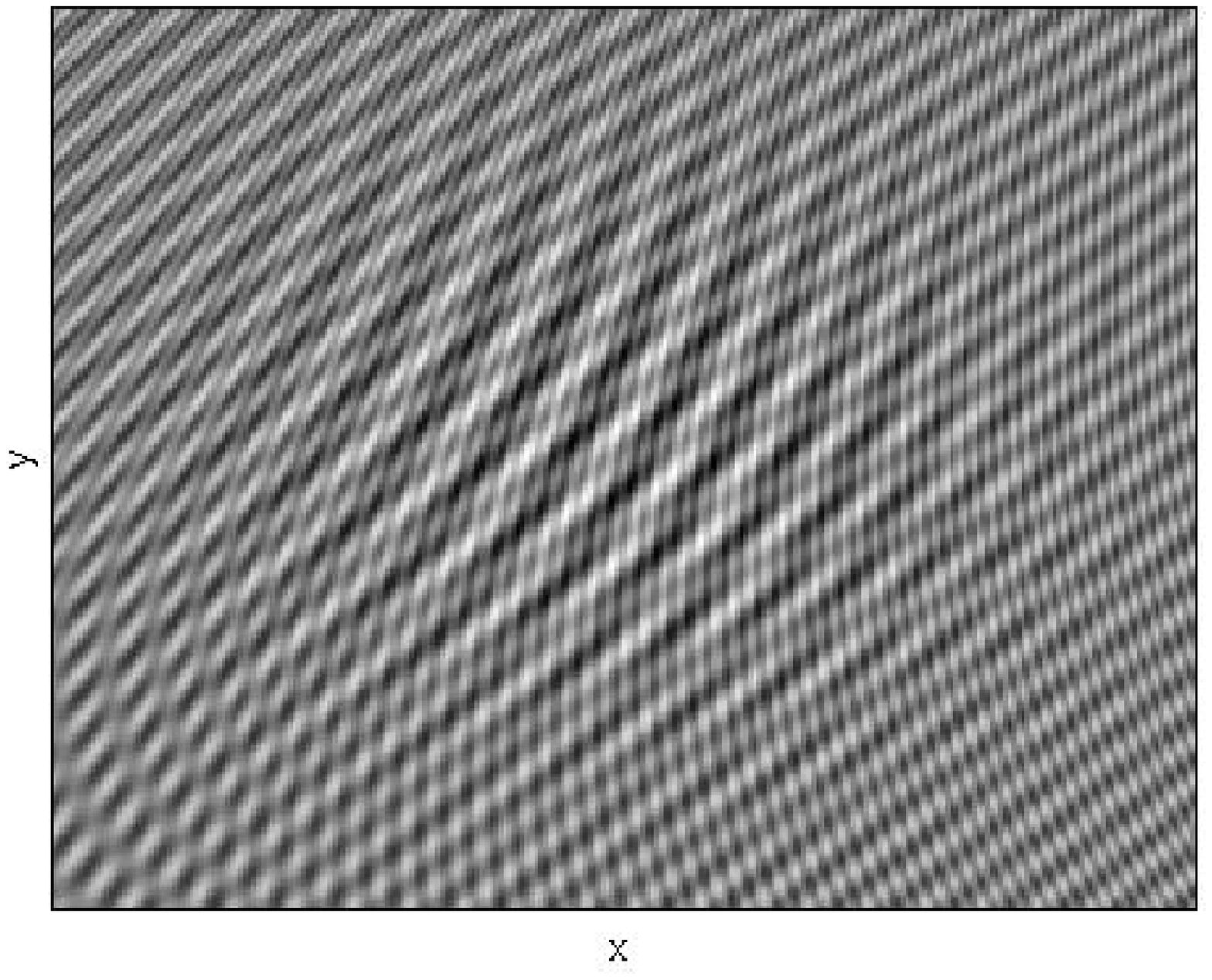}}
    \centerline{(a)}
  \end{minipage}
  \begin{minipage}[b]{0.24\linewidth}
    \centerline{
    \includegraphics[width=1.0\linewidth]{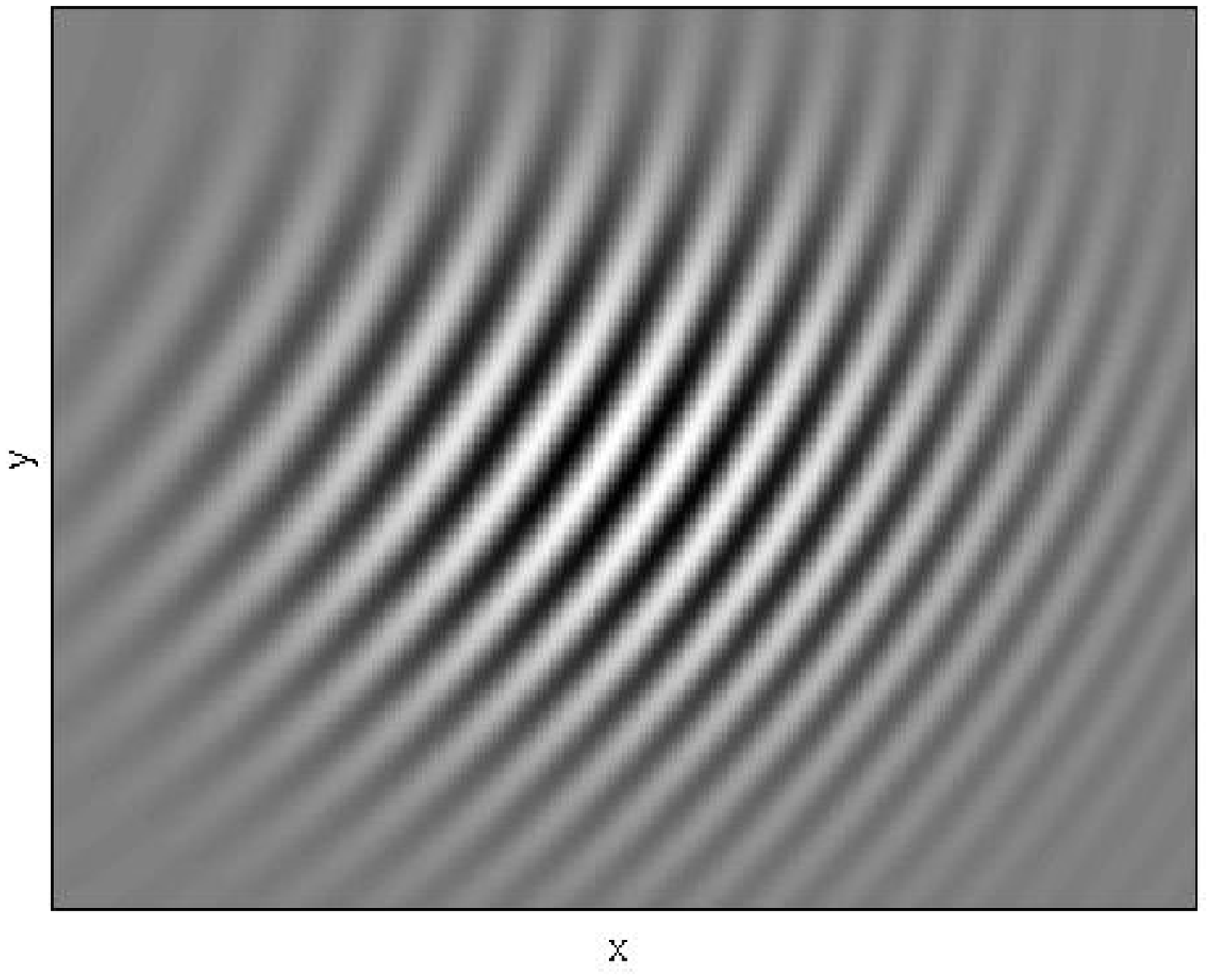}}
    \centerline{(b)}
  \end{minipage}
  \begin{minipage}[b]{0.24\linewidth}
    \centerline{
    \includegraphics[width=1.0\linewidth]{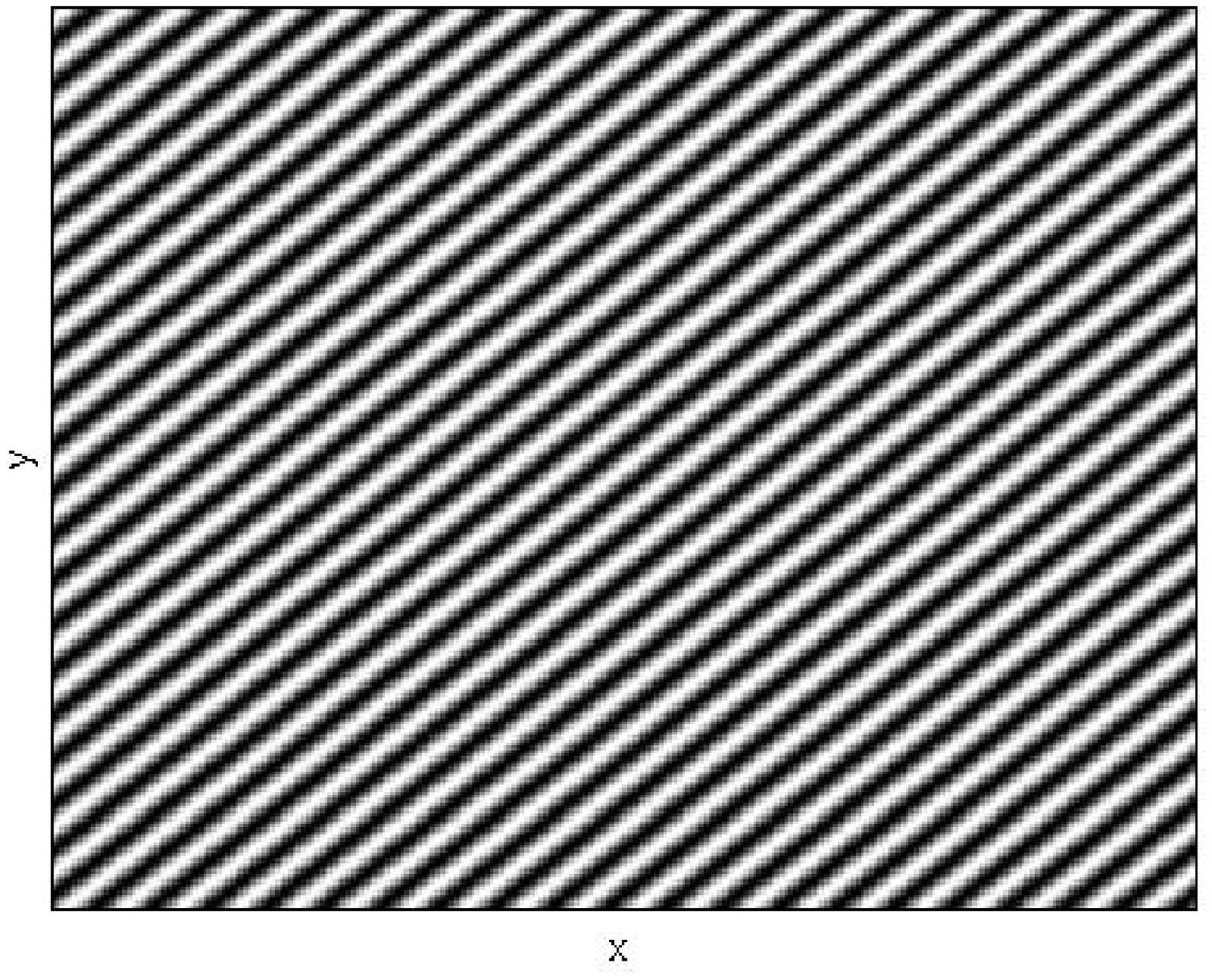}}
    \centerline{(c)}
  \end{minipage}
  \begin{minipage}[b]{0.24\linewidth}
    \centerline{
    \includegraphics[width=1.0\linewidth]{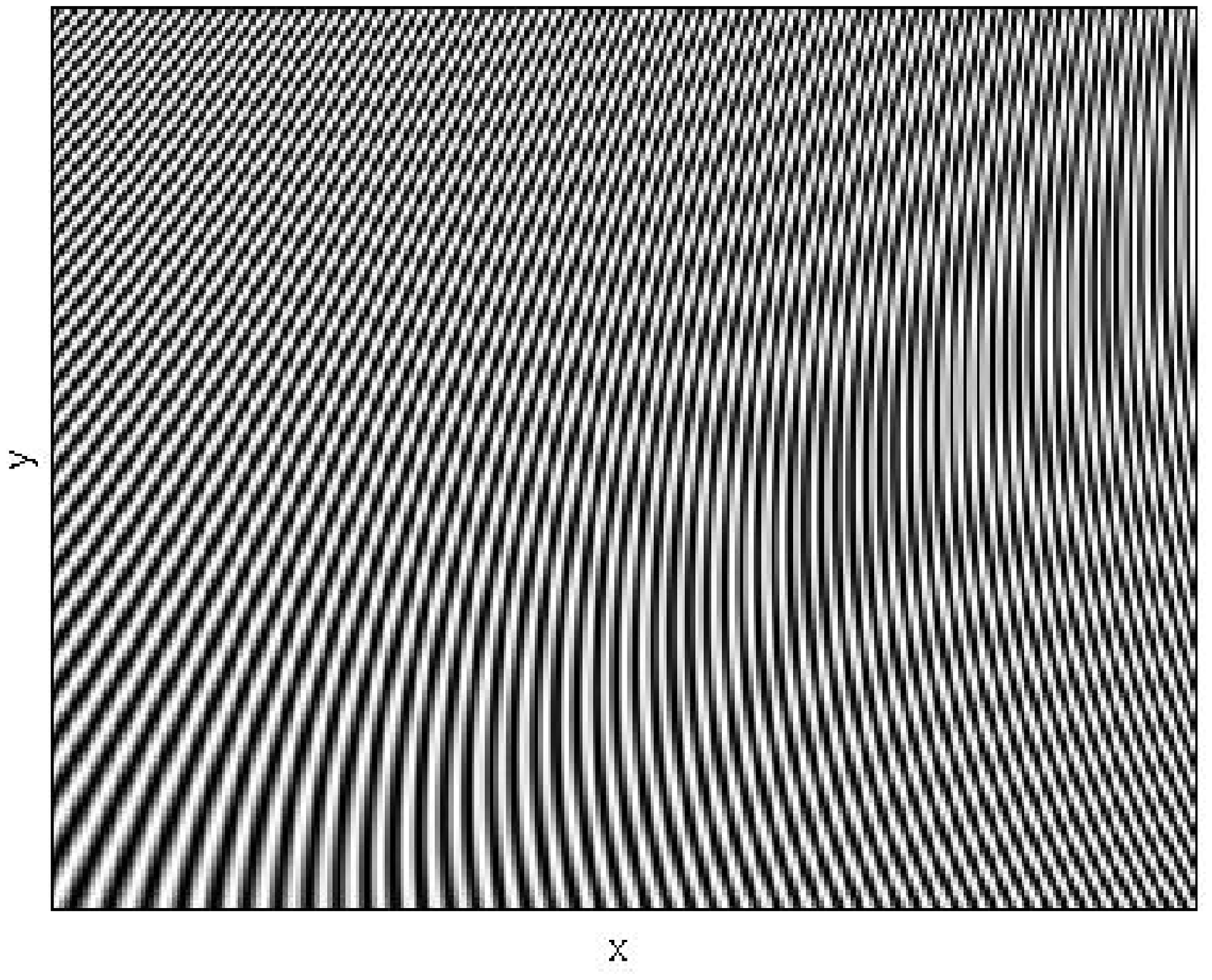}}
    \centerline{(d)}
  \end{minipage}
  \begin{minipage}[b]{0.24\linewidth}
    \centerline{
    \includegraphics[width=1.0\linewidth]{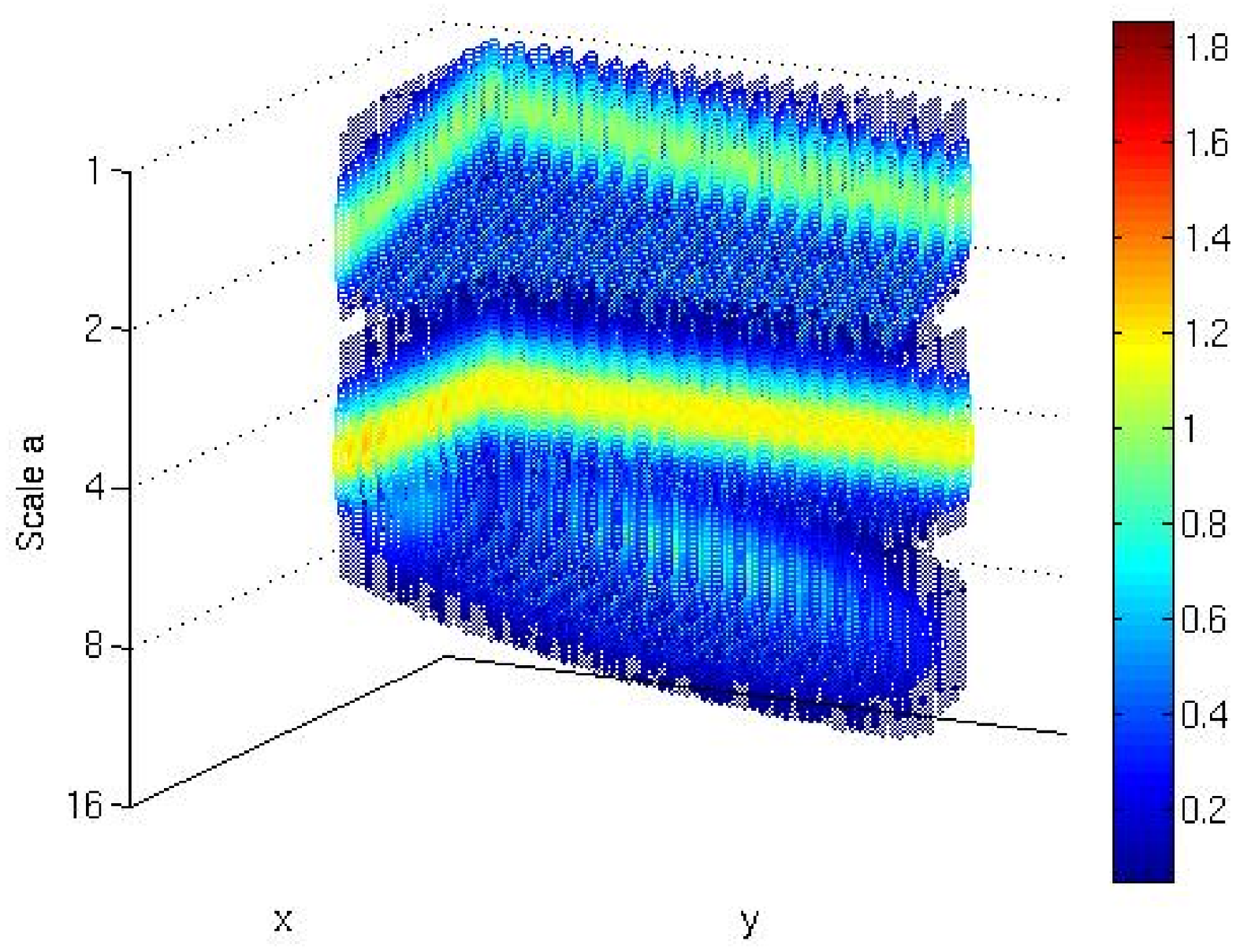}}
    \centerline{(e)}
  \end{minipage}
  \begin{minipage}[b]{0.24\linewidth}
    \centerline{
    \includegraphics[width=1.0\linewidth]{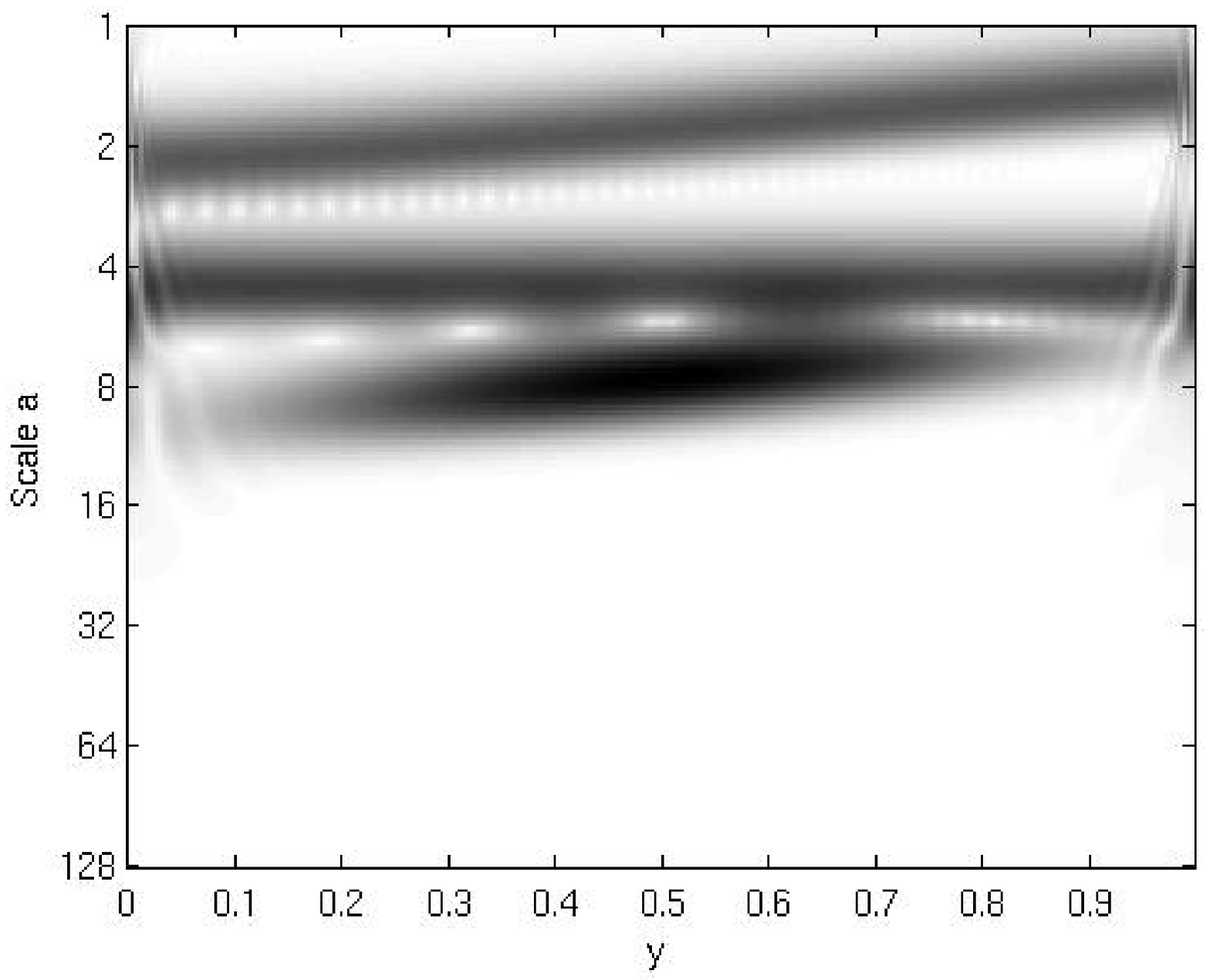}}
    \centerline{(f)}
  \end{minipage}
  \begin{minipage}[b]{0.24\linewidth}
    \centerline{
    \includegraphics[width=1.0\linewidth]{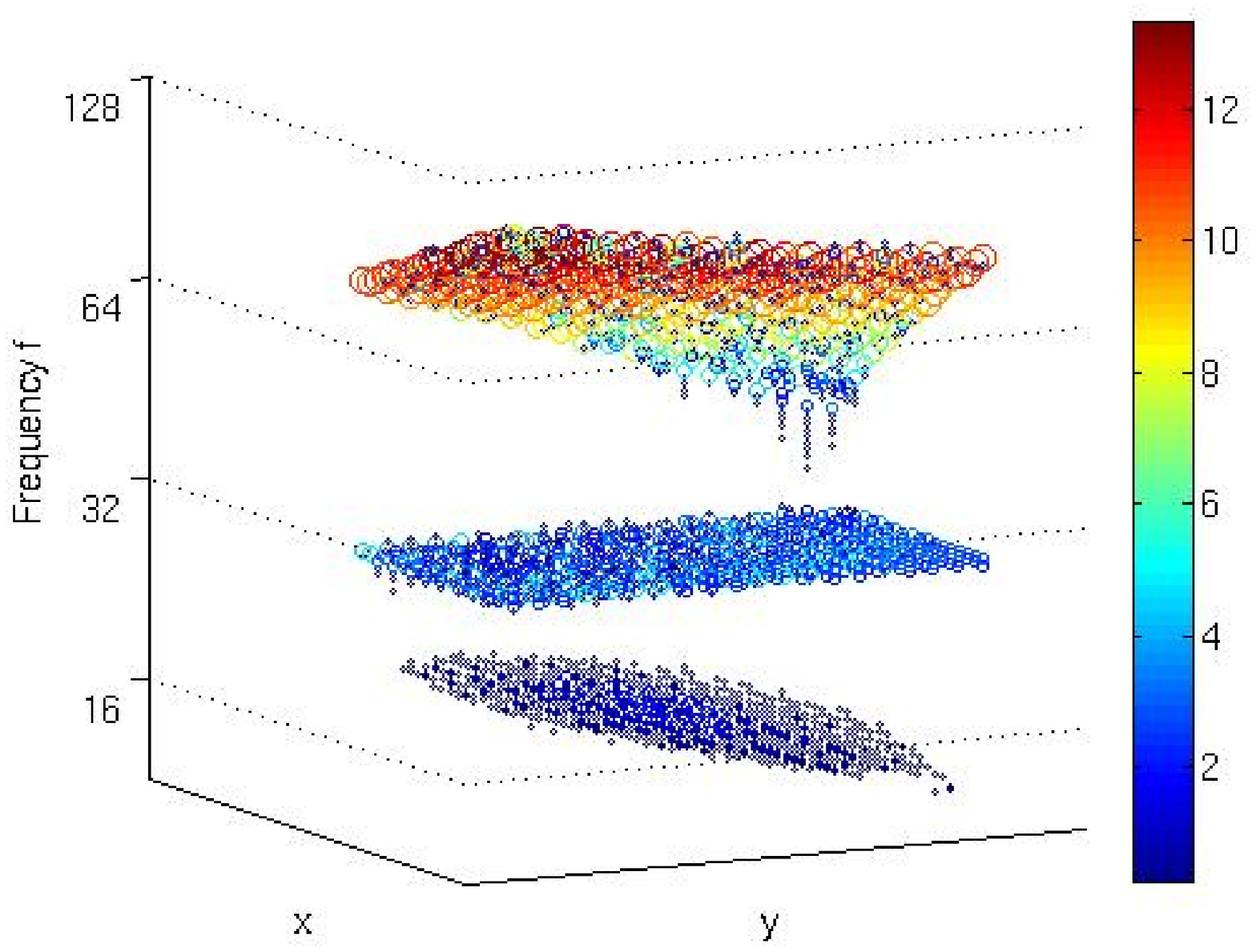}}
    \centerline{(g)}
  \end{minipage}
  \begin{minipage}[b]{0.24\linewidth}
    \centerline{
    \includegraphics[width=1.0\linewidth]{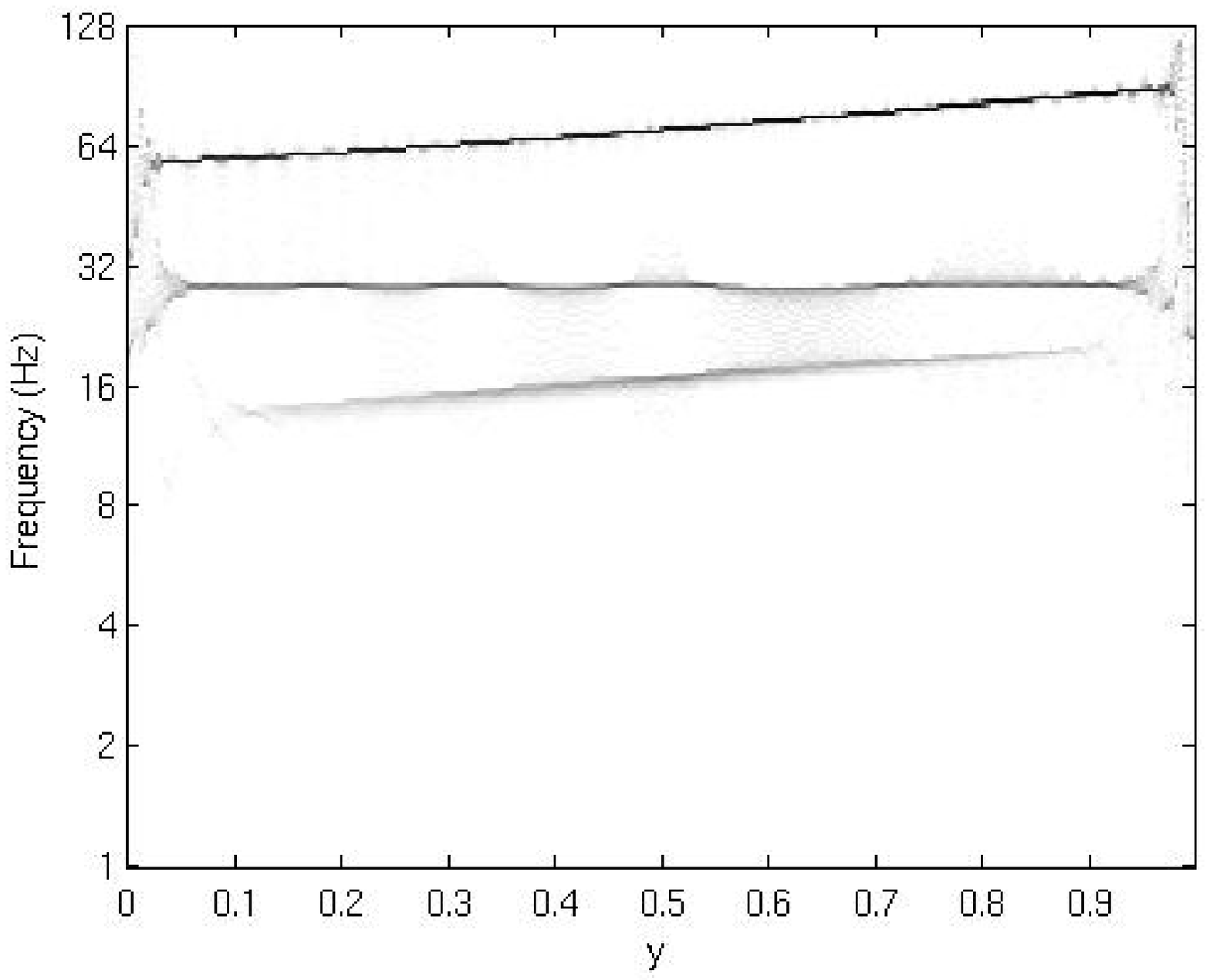}}
    \centerline{(h)}
  \end{minipage}
  \caption{(a): the 2D test-signal represented as an image. (b,c,d): its components. (e,f): the monogenic wavelet transform $|c_F|$ represented as a density in 3D, and in 2D for $y=0.5$ fixed. (g,h): the same visualizations of the MSST $|S_F|$.}
  \label{fig:visu}
\end{figure}
\subsection{Decomposition and Demodulation}
Once the MSST has been computed, the key point to separate and reconstruct the modes is to identify the instantaneous (anisotropic) frequency of each mode $|\nabla\varphi_\ell(b)|$ at each point $b$.  The $\ell^{th}$ IMMF $F_{\ell}$ is then approximated by $\widehat{F}_{\ell}(b)$ as follows:
\begin{equation}
\label{e:rec}
\widehat{F}_{\ell}(b) = \sum_{\hat k_{\ell}(b)-\kappa \le k \le \hat k_{\ell}(b)+\kappa } S_{F,\gamma}(b,k)\;.
\end{equation}
where $\hat k_{\ell}(b)$ is an appropriate estimate of $|\nabla\varphi_\ell(b)|$. Obtaining this estimate is an involved problem which will not be discussed here, so we simply use the approach used in ~\cite{daubechies:2011} based on a greedy algorithm. Our approximation $\widehat{F}_{\ell}(b)$ of the $\ell^{th}$ IMMF is computed by summing the coefficients of the synchrosqueezed transform in the vicinity of this instantaneous frequency, to regularize the solution. In practice the width $\kappa$
of the window is set to $5\Delta_k$.
To evaluate the method, we compare the mode $F_{\ell}$ of the test-signal defined in equation (\ref{e:testsig}) to the corresponding mode $\hat F_{\ell}$ obtained from the discrete MSST after extraction, by computing the normalized Mean-Squared Error (MSE):
\begin{equation}
\label{e:mse}
MSE(\hat F_{\ell}) = \frac{\lVert \hat F_{\ell} - F_{\ell} \rVert}{\lVert F_{\ell} \rVert}\;.
\end{equation}
Each reconstructed IMMF is displayed on Figure \ref{fig:extrac}, where one also provides the corresponding MSE.
It is clear that the modes are reconstructed with very high accuracy, although our method for ridge extraction is quite simple.

\begin{figure}[htb]
  \begin{minipage}[b]{0.32\linewidth}
    \centerline{
    \includegraphics[width=1.0\linewidth]{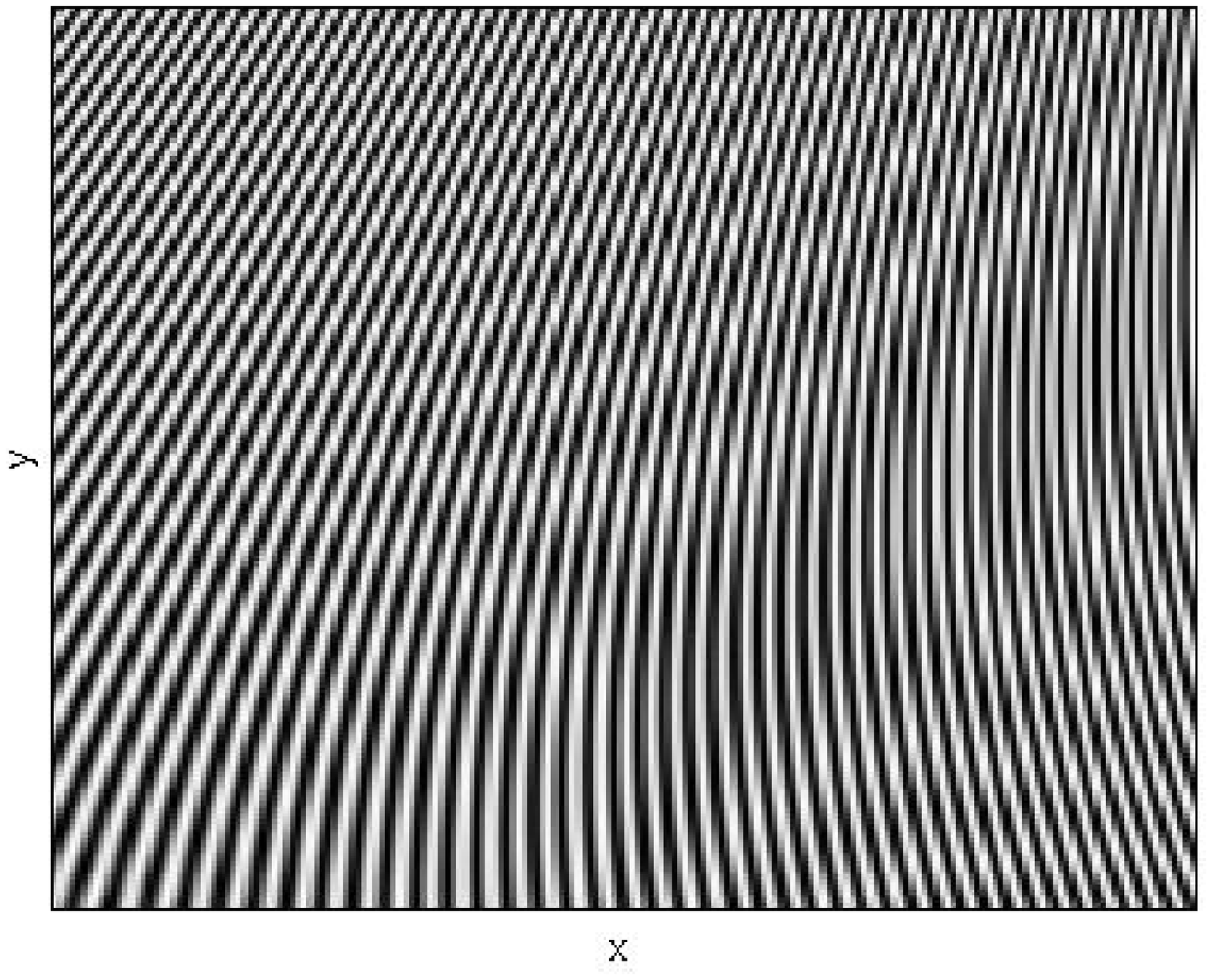}}
    \centerline{(a)}
  \end{minipage}
  \begin{minipage}[b]{0.32\linewidth}
    \centerline{
    \includegraphics[width=1.0\linewidth]{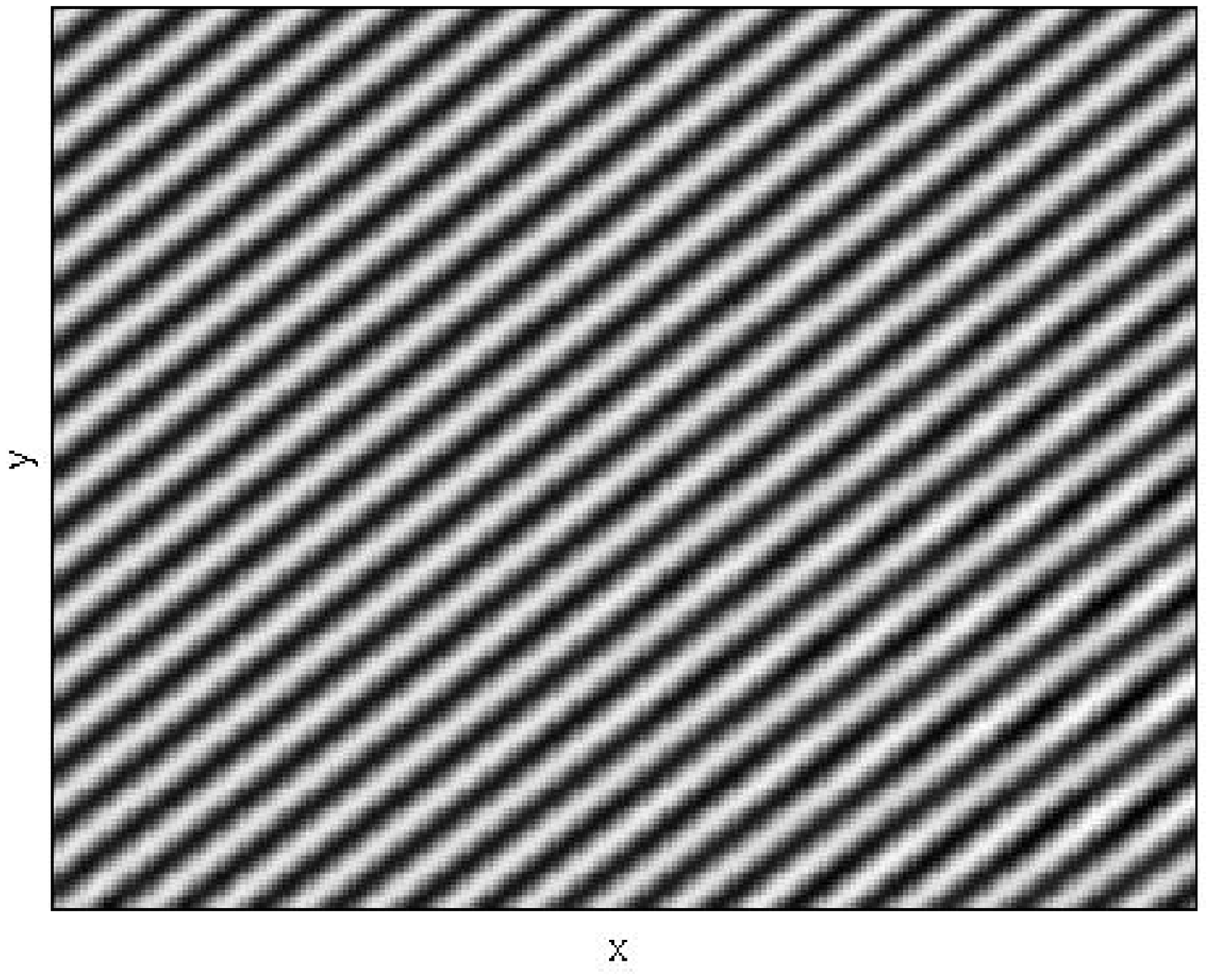}}
    \centerline{(b)}
  \end{minipage}
  \begin{minipage}[b]{0.32\linewidth}
    \centerline{
    \includegraphics[width=1.0\linewidth]{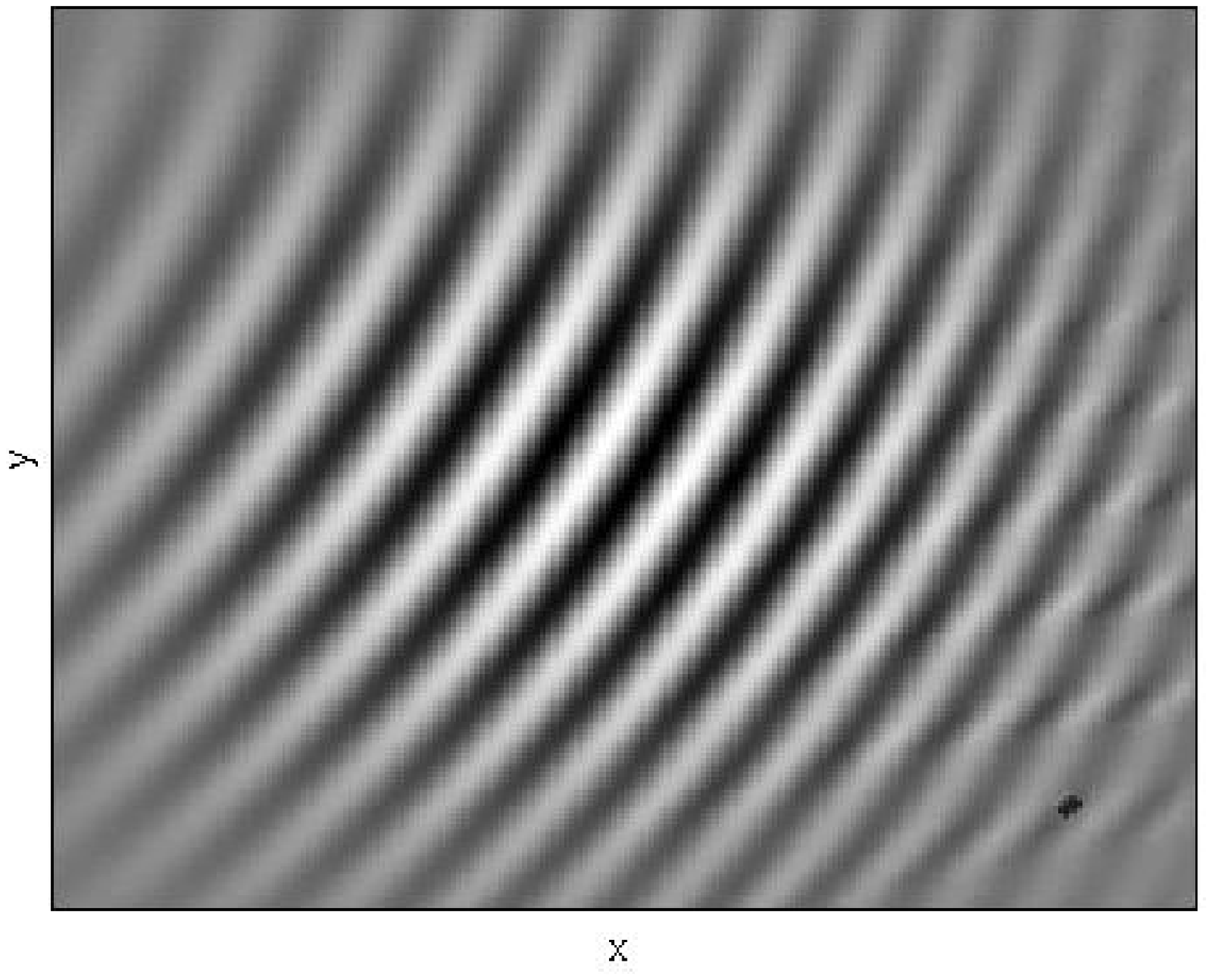}}
    \centerline{(c)}
  \end{minipage}
  \caption{The extracted and reconstructed modes 1, 2 and 3 for the test signal of equation (\ref{e:testsig}). The normalized MSE are respectively $0.03$, $0.06$ and $0.05$. We used the Morlet Wavelet $\psi_{\mu,\sigma}$ defined in~(\ref{e:morlet}) with $\sigma=2$ and $\mu=1$, and removed the border ($\frac{1}{8}$ at each side for each dimension) to avoid border effects.}
  \label{fig:extrac}
\end{figure}


Finally, we aim here at showing that the MSST remains efficient on real images or textures, even though they can not be considered as multicomponent signals. The following example shows the image Lenna where we artificially added an oscillating pattern. After a 2D synchrosqueezing transform one is able to extract the oscillating component to roughly reconstruct the original image. Note that this kind of image was first introduced in \cite{unser:vandeville:2009} to illustrate the discrete monogenic wavelet transform.

\begin{figure}[htb]
  \begin{minipage}[b]{0.32\linewidth}
    \centerline{
    \includegraphics[width=1.0\linewidth]{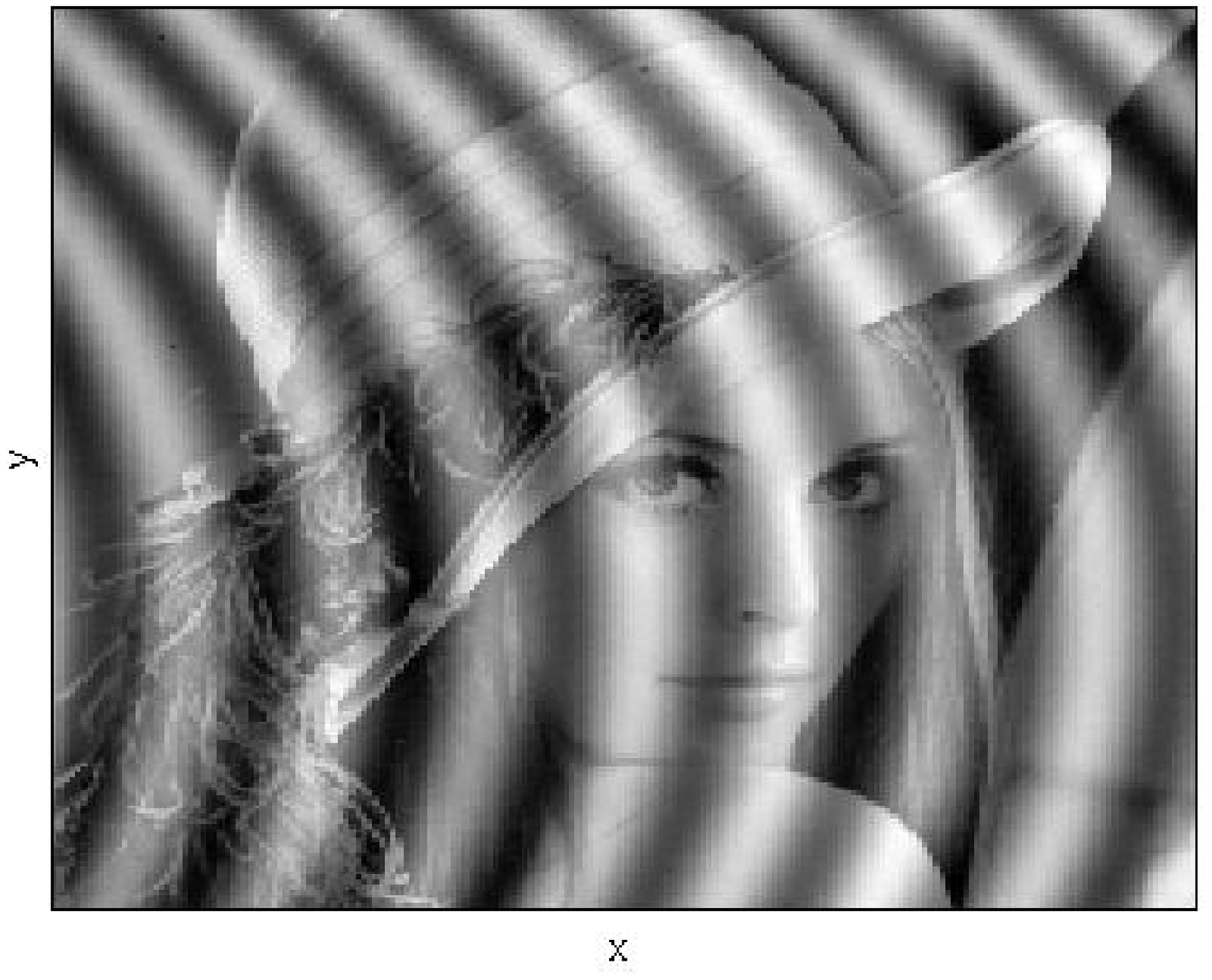}}
    \centerline{(a)}
  \end{minipage}
  \begin{minipage}[b]{0.32\linewidth}
    \centerline{
    \includegraphics[width=1.0\linewidth]{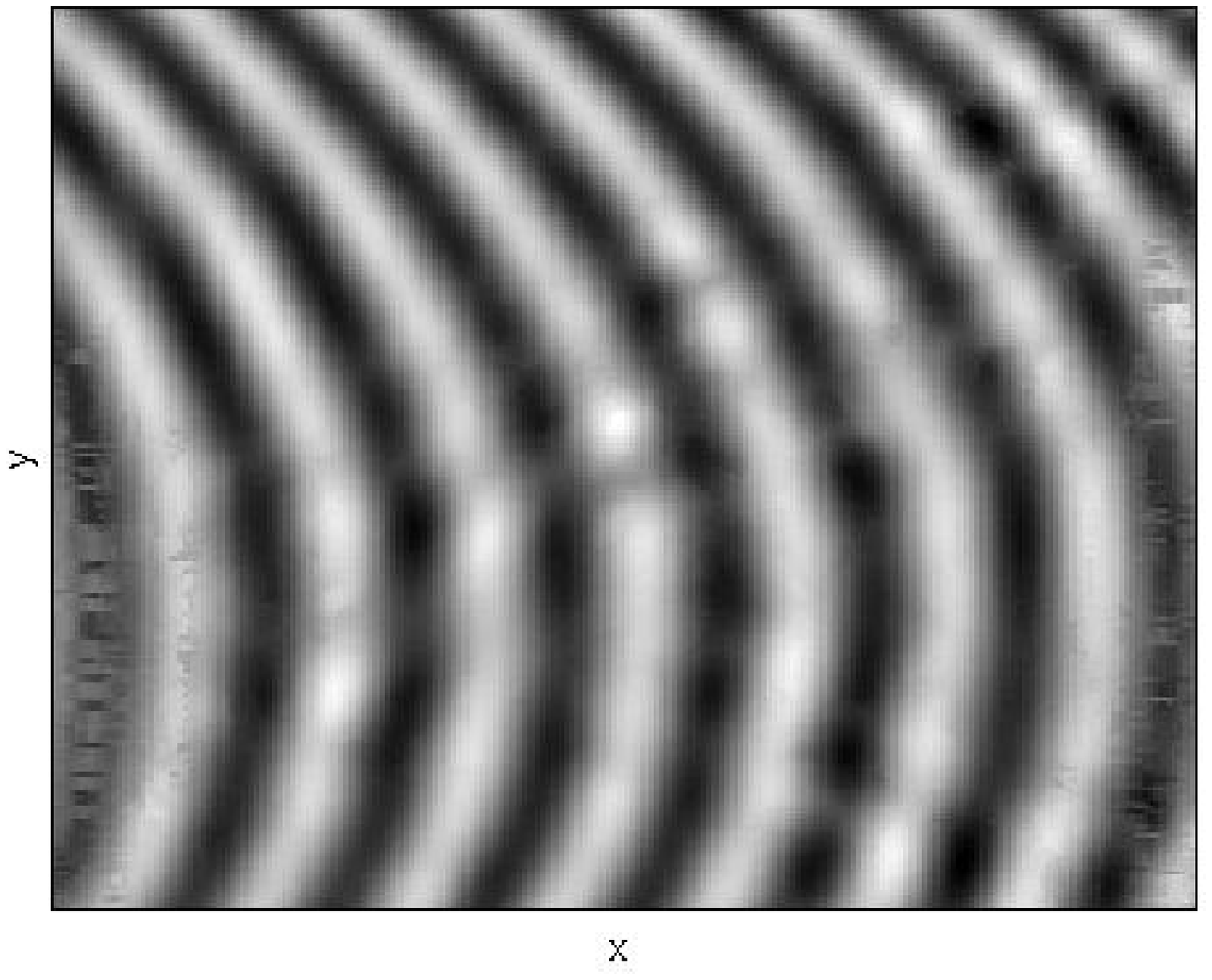}}
    \centerline{(b)}
  \end{minipage}
  \begin{minipage}[b]{0.32\linewidth}
    \centerline{
    \includegraphics[width=1.0\linewidth]{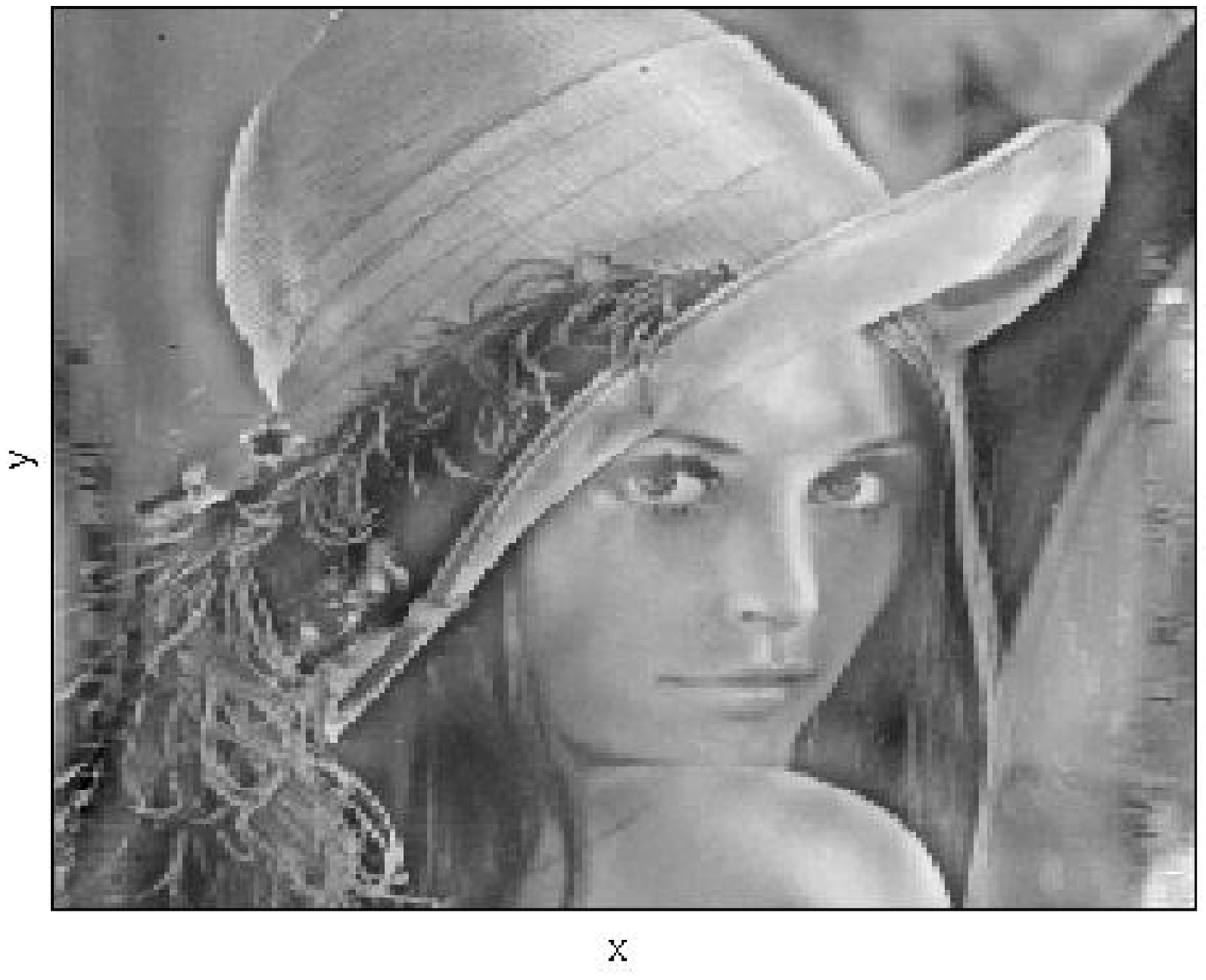}}
    \centerline{(c)}
  \end{minipage}
  \caption{Illustration of possible applications on real images. (a): Input image, Lenna plus an oscillating pattern. (b): the extracted mode by the 2D MSST. The parameters are the same as for Figure \ref{fig:extrac}. The MSE is 0.12. (c): The residual approximates the original Lenna image.}
  \label{fig:realim}
\end{figure}

%
\section{Proofs}\label{s:proofs}
\subsection{Proof of Proposition~\ref{pro:wavcoeffIMMF}}\label{s:proofs11}
We prove Proposition~\ref{pro:wavcoeffIMMF} into several steps.
\subsubsection{A preliminary result}
We first need a preliminary result:
\begin{lemma}\label{lem:interm1a}
Assume that $F$ is an IMMF with accuracy $\varepsilon$ of the form~(\ref{e:IMMF}). The following estimates hold for any $y,h\in\mathbb{R}^2$:
\begin{equation}\label{e:estimA}
|A(y+h)-A(y)|\leq \varepsilon \left(|h||\nabla\varphi(y)|+|h|^2 \frac{M}{2}\right)\;,
\end{equation}
\begin{equation}\label{e:estimT1}
|\cos\theta(y+h)-\cos\theta(y)|\leq \varepsilon \left(|h||\nabla\varphi(y)|+|h|^2 \frac{M}{2}\right)\;,
\end{equation}
\begin{equation}\label{e:estimT2}
|\sin\theta(y+h)-\sin\theta(y)|\leq \varepsilon \left(|h||\nabla\varphi(y)|+|h|^2 \frac{M}{2}\right)\;,
\end{equation}
\begin{equation}\label{e:estimP}
|\nabla \varphi(y+h)-\nabla\varphi(y)|\leq \varepsilon \left(|h||\nabla\varphi(y)|+|h|^2 \frac{M}{2}\right)\;,
\end{equation}
where $M$ is the constant defined in~(\ref{e:M}).
\end{lemma}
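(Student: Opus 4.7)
The plan is to obtain all four estimates by a first-order Taylor expansion (equivalently, the fundamental theorem of calculus along the segment $[y,y+h]$), combined with the slow-variation hypotheses~(\ref{e:hyp3a}),~(\ref{e:hyp4a}) and the absolute second-derivative bound~(\ref{e:M}).

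For~(\ref{e:estimA}), I would write
\[
A(y+h)-A(y)=\int_0^1 \nabla A(y+sh)\cdot h\,\rmd s,
\]
so that $|A(y+h)-A(y)|\le |h|\int_0^1 |\nabla A(y+sh)|\,\rmd s$. The componentwise slow-variation bound~(\ref{e:hyp3a}) implies $|\nabla A(x)|\le\varepsilon|\nabla\varphi(x)|$ for every $x$, which gives
\[
|A(y+h)-A(y)|\le \varepsilon|h|\int_0^1 |\nabla\varphi(y+sh)|\,\rmd s.
\]
I would then split $|\nabla\varphi(y+sh)|\le |\nabla\varphi(y)|+|\nabla\varphi(y+sh)-\nabla\varphi(y)|$, and control the second term by a further Taylor expansion of $\nabla\varphi$ based at $y$. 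Using the entrywise Hessian bound~(\ref{e:M}), one obtains $|\nabla\varphi(y+sh)-\nabla\varphi(y)|\le M\,s\,|h|$ (up to an irrelevant absolute constant coming from the conversion between the entrywise and operator norms on $2{\times}2$ matrices). Integration in $s\in[0,1]$ produces the contribution $\varepsilon M|h|^2/2$ and yields~(\ref{e:estimA}).

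The estimates~(\ref{e:estimT1}) and~(\ref{e:estimT2}) are obtained by the very same scheme, simply preceded by the elementary inequalities $|\cos a-\cos b|\le|a-b|$ and $|\sin a-\sin b|\le|a-b|$ applied at $a=\theta(y+h)$, $b=\theta(y)$; this reduces the problem to bounding $|\theta(y+h)-\theta(y)|$, which is treated exactly as $|A(y+h)-A(y)|$ was, since~(\ref{e:hyp3a}) gives the corresponding componentwise bound $|\nabla\theta|\le\varepsilon|\nabla\varphi|$.

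For~(\ref{e:estimP}) I would write
\[
\nabla\varphi(y+h)-\nabla\varphi(y)=\int_0^1 \nabla^2\varphi(y+sh)\,h\,\rmd s,
\]
and now invoke the second slow-variation hypothesis~(\ref{e:hyp4a}), which yields $\|\nabla^2\varphi(x)\|\le\varepsilon|\nabla\varphi(x)|$. This is the analogue, at one derivative higher, of the bound on $|\nabla A|$ used above, so the proof then mirrors the proof of~(\ref{e:estimA}) verbatim: bound $|\nabla\varphi(y+sh)|$ by $|\nabla\varphi(y)|$ plus a term controlled through~(\ref{e:M}), integrate in $s$, and collect the $M|h|^2/2$ factor. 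The only real care needed in the argument, and the only potential obstacle, is the bookkeeping of absolute constants arising from the fact that $M$ in~(\ref{e:M}) bounds individual entries of the Hessian, whereas the natural quantity in the integral form of Taylor's theorem is the $\ell^2\to\ell^2$ operator norm; this is purely a matter of absorbing universal numerical constants into $M$.
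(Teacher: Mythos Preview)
Your proposal is correct and follows essentially the same route as the paper: write the increment as an integral of the gradient along the segment, invoke the slow-variation hypotheses~(\ref{e:hyp3a}) or~(\ref{e:hyp4a}) to pull out the factor $\varepsilon$, then control $|\nabla\varphi(y+sh)|$ by $|\nabla\varphi(y)|+Ms|h|$ via~(\ref{e:M}) and integrate in $s$. The paper only writes out~(\ref{e:estimA}) and declares the others analogous, so your added detail for~(\ref{e:estimT1})--(\ref{e:estimP}) is fine; your caveat about the numerical constant in passing from the entrywise Hessian bound to the operator norm is well taken---the paper silently drops that constant too.
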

\begin{proof}
We only prove inequality~(\ref{e:estimA}), the others follows in the same way. Observe that:
\[
A(y+h)-A(y)=\int_0^1 \nabla A(y+th)\cdot h ~\rmd t
\]
By assumption~(\ref{e:hyp3a}), $\nabla A$ is slowly varying with respect to $\nabla \varphi$, then:
\[
|A(y+h)-A(y)|\leq \varepsilon|h|\int_0^1 |\nabla \varphi(y+th)|\rmd t
\]
Applying the usual mean value theorem, we deduce that for any $t$:
\begin{eqnarray*}
|\nabla \varphi(y+th)|&\leq& |\nabla \varphi(y)|+|\nabla \varphi(y+th)-\nabla \varphi(y)|\\
&\leq& |\nabla \varphi(y)|+t|h|\max_{i_1,i_2}\left(\sup_{y\in\mathbb{R}^2}|\partial^2_{x_{i_1},x_{i_2}} \varphi(y)|\right)\\
&\leq& |\nabla \varphi(y)|+M t|h|\;,
\end{eqnarray*}
applying Assumption~(\ref{e:M}) on second order partial derivatives of $\varphi$.
To sum up, one has
\[
|A(y+h)-A(y)|\leq \varepsilon|h|\int_0^1 |\nabla \varphi(y)|\rmd t+\varepsilon M|h|^2\int_0^1 t\rmd t\leq \varepsilon \left(|h||\nabla \varphi(y)|+M\frac{|h|^2}{2}\right)\;,
\]
which is the required result.
\end{proof}
%
\subsubsection{Proof of Proposition~\ref{pro:wavcoeffIMMF}}
We now prove Proposition~\ref{pro:wavcoeffIMMF}.\\
\noindent{\bf Proof of point~(\ref{pro:wavcoeffIMMFi}) of Proposition~\ref{pro:wavcoeffIMMF}}\\
By definition of the wavelet coefficients of $F$, one has
\[
c_F(a,b)=\int_{\mathbb{R}^2}A(x)\rme^{\varphi(x)n_{\theta(x)}}a^{-1}\psi\left(\frac{x-b}{a}\right)\rmd x\;.
\]
We now split this sum into three terms
\begin{equation}\label{e:split}
\begin{array}{lll}
c_F(a,b)&=&a^{-1} A(b)\int_{\mathbb{R}^2}\rme^{\varphi(x)n_{\theta(b)}}\psi\left(\frac{x-b}{a}\right)\rmd x \\
&&+a^{-1} A(b)\int_{\mathbb{R}^2}\left[\rme^{\varphi(x)n_{\theta(x)}}-\rme^{\varphi(x)n_{\theta(b)}}\right]
\psi\left(\frac{x-b}{a}\right)\rmd x\\
&&+a^{-1}\int_{\mathbb{R}^2}\left[A(x)-A(b)\right]\rme^{\varphi(x)n_{\theta(x)}}\psi\left(\frac{x-b}{a}\right)\rmd x\;.
\end{array}
\end{equation}
Observe now that
\begin{equation}\label{e:varphi}
\varphi(x)=\varphi(b)+\nabla\varphi(b)\cdot (x-b)\\
+\int_0^1 \left[\nabla \varphi (b+t(x-b))-\nabla\varphi (b)\right]\cdot(x-b)\rmd t\;.
\end{equation}
We set $u=\frac{x-b}{a}$ and use equation (\ref{ext}) with $k=\nabla \varphi(b)$. We then deduce that:
\begin{equation}\label{e:interm}
a^{-1} \int_{\mathbb{R}^2}\rme^{(\varphi(b)+\nabla\varphi(b)\cdot (x-b))n_{\theta(b)}}\psi\left(\frac{x-b}{a}\right)\rmd x
=\rme^{\varphi(b)n_{\theta(b)}}~a\widehat{\psi}(a\nabla \varphi(b))\;.
\end{equation}
Combining equations~(\ref{e:split}), (\ref{e:varphi}) and~(\ref{e:interm}) implies:
\begin{eqnarray*}\label{e:split2}
&&c_F(a,b)-a\widehat{\psi}(a\nabla \varphi(b)) \left(A(b)~\rme^{\varphi(b)n_{\theta(b)}}\right)\\
&=& A(b)\int_{\mathbb{R}^2}\rme^{(\varphi(b)+\nabla\varphi(b)\cdot (x-b))
n_{\theta(b)}}\left(\rme^{n_{\theta(b)}\int_0^1 \left[\nabla \varphi (b+t(x-b))-\nabla\varphi (b)\right]\cdot(x-b)\rmd t}-1\right)a^{-1}\psi\left(\frac{x-b}{a}\right)\rmd x \\
&&+ A(b)\int_{\mathbb{R}^2}\left[\rme^{\varphi(x)n_{\theta(x)}}-\rme^{\varphi(x)n_{\theta(b)}}\right]
a^{-1}\psi\left(\frac{x-b}{a}\right)\rmd x\\
&&+\int_{\mathbb{R}^2}\left[A(x)-A(b)\right]\rme^{\varphi(x)n_{\theta(x)}}a^{-1}\psi\left(\frac{x-b}{a}\right)\rmd x\;.
\end{eqnarray*}
Hence
\begin{eqnarray*}
&&\left|c_{F}(a,b)-a\widehat{\psi}(a\nabla \varphi(b))\left(A(b)~\rme^{\varphi(b)n_{\theta(b)}}\right)\right|\\
&\leq&a^{-1} A(b)\int_{\mathbb{R}^2}\left|\rme^{\left[\int_0^1 \left[\nabla \varphi
(b+t(x-b))-\nabla \varphi(b)\right]\cdot (x-b)\rmd t \right] n_{\theta(b)}}-1\right|\left|\psi\left(\frac{x-b}{a}\right)\right|\rmd x\\
&&+a^{-1}A(b)\int_{\mathbb{R}^2}\left|\rme^{\varphi(x)n_{\theta(x)}}-\rme^{\varphi(x)n_{\theta(b)}}\right|
\left|\psi\left(\frac{x-b}{a}\right)\right|\rmd x\\
&&+a^{-1}\int_{\mathbb{R}^2}\left|A(x)-A(b)\right|\left|\psi\left(\frac{x-b}{a}\right)\right|\rmd x\;.
\end{eqnarray*}
We now give an upper bound of each of these three terms. We bound the first one using the mean value theorem and the triangular inequality:
\begin{eqnarray*}
&&a^{-1} A(b)\int_{\mathbb{R}^2}\left|\rme^{\left[\int_0^1 \left(\left[\nabla \varphi (b+t(x-b))-\nabla \varphi(b)\right]\cdot (x-b)\right)\rmd t\right]n_{\theta(b)}}-1\right|\left|\psi\left(\frac{x-b}{a}\right)\right|\rmd x\\
&\leq&a^{-1}A(b)\int_{\mathbb{R}^2}\left[\int_0^1 \left|(\nabla \varphi (b+t(x-b))-\nabla \varphi(b))\cdot (x-b)\right|\rmd t\right]~\left|\psi\left(\frac{x-b}{a}\right)\right|\rmd x\;.
\end{eqnarray*}
We now use  inequality~(\ref{e:estimP}) with $y=b$ and $h=t(x-b)$. We then obtain:
\begin{eqnarray}
\nonumber
& &a^{-1}A(b)\int_{\mathbb{R}^2}\left[\int_0^1 \left|(\nabla \varphi (b+t(x-b))-\nabla \varphi(b))
\cdot (x-b)\right|\rmd t\right] \left|\psi\left(\frac{x-b}{a}\right)\right|\rmd x\\
\nonumber
&\leq&\varepsilon a^{-1} A(b)\int_{\mathbb{R}^2}\left[\int_0^1 (|t||x-b|^2\left|\nabla \varphi(b)\right|+ M|t|^2\frac{|x-b|^3}{2})\rmd t\right]\left|\psi\left(\frac{x-b}{a}\right)\right|\rmd x\\
\nonumber
&\leq&\varepsilon a^{-1}A(b)\int_{\mathbb{R}^2}\left[ \frac{|x-b|^2}{2}\left|\nabla \varphi(b)\right|+ M\frac{|x-b|^3}{6}\right]\cdot \left|\psi\left(\frac{x-b}{a}\right)\right|\rmd x\\
\label{term1}
&\leq&\varepsilon A(b)~\left(\frac{a^3}{2}\left|\nabla \varphi(b)\right|I_2+\frac{a^4}{6}M I_3\right)
\end{eqnarray}
where we set $u=\frac{x-b}{a}$ in the last integral. \\
\noindent Let us now give an upper bound of the second term. Since
\[
\rme^{\varphi(x)n_{\theta(x)}}-\rme^{\varphi(x)n_{\theta(b)}}=\sin(\varphi(x))(n_{\theta(x)}-n_{\theta(b)})\;,
\]
one has:
\begin{eqnarray}\nonumber
&&a^{-1}A(b)\int_{\mathbb{R}^2}\left|\rme^{\varphi(x)n_{\theta(x)}}-\rme^{\varphi(x)n_{\theta(b)}}\right|
\left|\psi\left(\frac{x-b}{a}\right)\right|\rmd x\;\\
\nonumber
&&\leq a^{-1} A(b)\int_{\mathbb{R}^2}\left|n_{\theta(x)}-n_{\theta(b)}\right|\left|
\psi\left(\frac{x-b}{a}\right)\right|\rmd x\\
\nonumber
&&\leq \varepsilon a^{-1} A(b)\sqrt{2}\int_{\mathbb{R}^2}\left(|x-b||\nabla\varphi(b)|+|x-b|^2 \frac{M}{2}\right)
\left|\psi\left(\frac{x-b}{a}\right)\right|\rmd x\\
\nonumber
&&\leq \varepsilon a^2 A(b)\sqrt{2}\int_{\mathbb{R}^2}\left(|u||\nabla\varphi(b)|+ a |u|^2 \frac{M}{2}\right)
|\psi(u)| \rmd u\\
\label{term2}
&&\leq \varepsilon A(b)\sqrt{2}~\left(a^2|\nabla\varphi(b)|I_1+ a^3 \frac{M}{2}I_2\right)
\end{eqnarray}
where we have used inequalities~(\ref{e:estimT1}) and~(\ref{e:estimT2}) with $y=b$ and $h=x-b$ for the third inequality and set $u=(x-b)/a$ for the fourth one.\\
\noindent Finally, using the same approach we prove that the last term is bounded by, using~(\ref{e:estimA}):
\begin{equation}
\label{term3}
a^{-1}\int_{\mathbb{R}^2}\left|A(x)-A(b)\right|\left|\psi\left(\frac{x-b}{a}\right) \right|\rmd x
 \leq \varepsilon ~\left(a^2|\nabla\varphi(b)|I_1+ a^3 \frac{M}{2}I_2\right)\;.
 \end{equation}
Combining inequalities~(\ref{term1}, \ref{term2}, \ref{term3}) leads to:
\begin{eqnarray*}
\left|c_{F}(a,b)-a\widehat{\psi}(a\nabla \varphi(b))A(b)\rme^{\varphi(b)n_{\theta(b)}}\right|
&\leq&\varepsilon a^2A(b)\left( \left(\frac{a}{2}\left|\nabla \varphi(b)\right|I_2+\frac{a^2}{6}M I_3\right)
+\sqrt{2}\left(|\nabla\varphi(b)|I_1+ a \frac{M}{2}I_2\right)\right)\\
&+& \varepsilon a^2~\left(|\nabla\varphi(b)|I_1+ a \frac{M}{2}I_2\right) \\
\end{eqnarray*}
which gives (\ref{e:remaind1}).

\

%
%
\noindent{\bf Proof of point~(\ref{pro:wavcoeffIMMFii}) of Proposition~\ref{pro:wavcoeffIMMF}}\\
Since $A\in L^\infty(\mathbb{R}^2)$ and $\psi\in W^{1,1}(\mathbb{R}^2)$, one has:
\[
\partial_{b_i} c_F(a,b)=- \int_{\mathbb{R}^2}A(x)\rme^{\varphi (x)n_{\theta}(x)}a^{-2}\partial_{x_i}\psi\left(\frac{x-b}{a}\right)\rmd x\;.
\]
As below we split the sum into three terms
\begin{equation}\label{e:split:der}
\begin{array}{lll}
\partial_{b_i} c_F(a,b)&=&-a^{-2} A(b)\int_{\mathbb{R}^2}\rme^{\varphi (x)n_{\theta}(b)}\partial_{x_i}\psi\left(\frac{x-b}{a}\right)\rmd x\\
&&-a^{-2} A(b)\int_{\mathbb{R}^2}\left(\rme^{\varphi (x)n_{\theta}(x)}-\rme^{\varphi (x)n_{\theta}(b)}\right)\partial_{x_i}\psi\left(\frac{x-b}{a}\right)\rmd x\\
&&-a^{-2}\int_{\mathbb{R}^2}\left[A(x)-A(b)\right]\rme^{\varphi (x)n_{\theta}(x)}\partial_{x_i}\psi\left(\frac{x-b}{a}\right)\rmd x
\end{array}
\end{equation}
We again use~(\ref{e:varphi}) and equation (\ref{ext}) with $k=\nabla \varphi(b)$ to write:
\begin{equation}\label{e:interm2}
-a^{-2} \int_{\mathbb{R}^2}\rme^{(\varphi(b)+\nabla\varphi(b)\cdot (x-b))n_{\theta(b)}}\partial_{x_i}\psi\left(\frac{x-b}{a}\right)\rmd x \\
= \rme^{\varphi(b)n_{\theta(b)}}~\partial_{b_i}\varphi(b)n_{\theta(b)} ~a \widehat{\psi}(a\nabla \varphi(b))\;.
\end{equation}
Hence we deduce that
\begin{eqnarray*}
&&\left|\partial_{b_i} c_F(a,b)-\partial_{b_i} \varphi(b) n_{\theta(b)}\left(a\widehat{\psi}(a\nabla \varphi(b))\right)\left(A(b)\rme^{\varphi(b)n_{\theta(b)}}\right)\right|\\
&\leq&a^{-2} A(b)\int_{\mathbb{R}^2}\left|\rme^{(\int_0^1 \left[\nabla \varphi
(b+t(x-b))-\nabla \varphi(b)\right]\cdot (x-b)\rmd t )n_{\theta(b)}}-1\right|\left|\partial_{x_i}\psi\left(\frac{x-b}{a}\right)\right|\rmd x\\
&&+a^{-2} A(b)\sqrt{2}\int_{\mathbb{R}^2}\left|n_{\theta(x)}-n_{\theta(b)}\right|\left|\partial_{x_i}\psi\left(\frac{x-b}{a}\right)\right|\rmd x\\
&&a^{-2} \int_{\mathbb{R}^2}\left|A(x)-A(b)\right|\left|\partial_{x_i}\psi\left(\frac{x-b}{a}\right)\right|\rmd x
\end{eqnarray*}
A similar approach than in the proof of point~(\ref{pro:wavcoeffIMMFi}) leads to the following upper bound:
\begin{eqnarray*}
&&\left|\partial_{b_i} c_F(a,b)-\partial_{b_i} \varphi(b) n_{\theta(b)}\left(a\widehat{\psi}(a\nabla \varphi(b))\right)\left(A(b)\rme^{\varphi(b)n_{\theta(b)}}\right)\right|\\
&\leq& \varepsilon A(b)~\left(\frac{a^2}{2}\left|\nabla \varphi(b)\right|I'_2+\frac{a^3}{6}M I'_3\right)\\
&&+\varepsilon A(b) \sqrt{2}~\left(a|\nabla\varphi(b)|I'_1+ a^2 \frac{M}{2}I'_2\right)\\
&&+\varepsilon ~\left(a|\nabla\varphi(b)|I'_1+ a^2 \frac{M}{2}I'_2\right)
\end{eqnarray*}
which leads to the estimate (\ref{pro:wavcoeffIMMFii}) of Proposition~\ref{pro:wavcoeffIMMF}, with:
$$
|R_2(a,b)|\leq A(b)\left(\frac{a}{2}\left|\nabla \varphi(b)\right|I'_2+\frac{a^2}{6}M I'_3\right)+ \left(\sqrt{2}A(b)+1\right)\left(|\nabla\varphi(b)|I'_1+ a \frac{M}{2}I'_2\right)\;.
$$

%
%
\subsection{Proof of Theorem~\ref{th:main1}}\label{s:proofs12}
We can now prove Theorem~\ref{th:main1}. By definition for $i=1,2$:
\[
\Lambda_i(a,b)=\left(\partial_{b_i} c_{F}(a,b)\right)(c_F(a,b))^{-1}\;.
\]
Set $B=a\widehat{\psi}(a\nabla\varphi(b))\left(A(b)\rme^{\varphi(b)n_{\theta(b)}}\right)$. Proposition~\ref{pro:wavcoeffIMMF} implies that:
\begin{equation}\label{e:3}
|c_F(a,b)-B|\leq \varepsilon a^2 \left|R_1(a,b)\right|\;,
\end{equation}
and
\begin{equation}\label{e:4}
|\partial_{b_i}c_F(a,b)-\partial_{b_i}\varphi(b) n_\theta(b)B|\leq \varepsilon a \left| R_2(a,b)\right|\;,
\end{equation}
where $R_1(a,b)$ and $R_2(a,b)$ satisfy inequalities (\ref{e:remaind1}) and (\ref{e:remaind2}) respectively.
Then, for $i=1,2$:
\[
\Lambda_i(a,b)-\partial_{b_i}\varphi(b)n_{\theta(b)}=\left[\partial_{b_i}c_F(a,b)-\partial_{b_i}\varphi(b) n_{\theta(b)} B+\partial_{b_i}\varphi(b) n_{\theta(b)}(B-c_F(a,b))\right]\left[c_F(a,b)\right]^{-1}\;.
\]
Using~(\ref{e:3}) and~(\ref{e:4}) implies that:
\[
|\Lambda_i(a,b)-\partial_{b_i}\varphi(b)n_{\theta(b)}|\leq \varepsilon\left[a|R_2(a,b)|+ a^2|R_1(a,b)||\partial_{b_i}\varphi(b)|\right]~|c_F(a,b)|^{-1}\;.
\]
By assumption $|c_F(a,b)|\geq \varepsilon^\nu$ for all $(a,b)$, which leads to (\ref{e:thmain1}). The last estimate (\ref{e:thmain1bis}) follows from the fact that for any given $a_0>0$ \[
\sup_{(a,b)\in (0,a_0)\times\mathbb{R}} \left(a|R_2(a,b)|+ a^2|R_1(a,b)||\nabla\varphi(b)|\right)<\infty\;.
\]
Then, since $\nu\in (0,1/2)$, there exists some $\varepsilon_0>0$ depending on $a_0$ such that for any $0<\varepsilon\leq \varepsilon_0$
$$
a|R_2(a,b)|+ a^2|R_1(a,b)||\nabla\varphi(b)| \leq \varepsilon^{2\nu-1}\;,
$$
which is equivalent to
$$
\varepsilon^{1-\nu}\left(a|R_2(a,b)|+ a^2|R_1(a,b)||\nabla\varphi(b)|\right) \leq \varepsilon^{\nu}\;.
$$
This last inequality and inequality~(\ref{e:thmain1}) clearly implies inequality~(\ref{e:thmain1bis}).
\subsection{Proof of Proposition~\ref{pro:main1}}\label{s:proofs2}
The proof of Proposition~\ref{pro:main1} relies on Theorem~\ref{th:main1} and on the following lemma:
\begin{lemma}\label{lem:promain1}
Let $F$ an IMMF with accuracy $\varepsilon>0$. Provided that $\varepsilon>0$ is sufficiently small, the sign of $\mathrm{Re}(\partial_{b_1} c_F(a,b)\overline{\partial_{b_2} c_F(a,b)})$ is this of $\partial_{b_1} \varphi(b)\partial_{b_2} \varphi(b)$.
\end{lemma}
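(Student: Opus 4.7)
The plan is to apply the asymptotic expansion of Proposition~\ref{pro:wavcoeffIMMF}(\ref{pro:wavcoeffIMMFii}) to both partial derivatives $\partial_{b_i}c_F$, multiply out and exploit quaternionic algebra to show that the leading contribution to $\mathrm{Re}(\partial_{b_1}c_F(a,b)\,\overline{\partial_{b_2}c_F(a,b)})$ is a \emph{non-negative} real multiple of $\partial_{b_1}\varphi(b)\partial_{b_2}\varphi(b)$. The $O(\varepsilon)$ error terms will then be shown to be too small to flip the sign once $\varepsilon$ is taken sufficiently small.

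First I would set $B=a\widehat{\psi}(a\nabla\varphi(b))\,A(b)\,\rme^{\varphi(b)n_{\theta(b)}}$, so that Proposition~\ref{pro:wavcoeffIMMF}(\ref{pro:wavcoeffIMMFii}) reads $\partial_{b_i}c_F(a,b)=\partial_{b_i}\varphi(b)\,n_{\theta(b)}\,B+\varepsilon a\,R_2^{(i)}(a,b)$, with each $|R_2^{(i)}|$ bounded as in~(\ref{e:remaind2}). Expanding the product then yields
\[
\partial_{b_1}c_F(a,b)\,\overline{\partial_{b_2}c_F(a,b)}=\partial_{b_1}\varphi(b)\,\partial_{b_2}\varphi(b)\,(n_{\theta(b)}B)\,\overline{(n_{\theta(b)}B)}+E(a,b),
\]
where $E(a,b)$ collects the two cross terms and the one quadratic term, with modulus bounded by $C\varepsilon|B|+C\varepsilon^{2}$ on any fixed compact range of $a$.

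The key algebraic step, which I would carry out using $\overline{n_\theta}=-n_\theta$ (since $n_\theta$ is pure imaginary), $n_\theta\overline{n_\theta}=|n_\theta|^2=1$, the commutativity of the real scalars $a\widehat\psi(a\nabla\varphi(b))$ and $A(b)$ with everything, and $\rme^{\varphi n_\theta}\,\overline{\rme^{\varphi n_\theta}}=1$, is the identity
\[
(n_{\theta(b)}B)\,\overline{(n_{\theta(b)}B)}=|B|^{2}=a^{2}\,\widehat{\psi}(a\nabla\varphi(b))^{2}\,A(b)^{2}\in\mathbb{R}_{+}.
\]
Taking real parts then gives
\[
\mathrm{Re}\bigl(\partial_{b_1}c_F(a,b)\,\overline{\partial_{b_2}c_F(a,b)}\bigr)=\partial_{b_1}\varphi(b)\,\partial_{b_2}\varphi(b)\,a^{2}\,\widehat{\psi}(a\nabla\varphi(b))^{2}\,A(b)^{2}+\mathrm{Re}\,E(a,b).
\]

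The main obstacle will be ensuring that this leading term dominates the remainder, which requires $|B|$ to be bounded away from zero; equivalently, $\widehat{\psi}(a\nabla\varphi(b))$ must not vanish. This is exactly guaranteed by the hypothesis $|c_F(a,b)|\geq\varepsilon^{\nu}$ that is tacitly inherited from Theorem~\ref{th:main1} and used in Proposition~\ref{pro:main1}: combined with Proposition~\ref{pro:wavcoeffIMMF}(\ref{pro:wavcoeffIMMFi}), it forces $|B|\geq\varepsilon^{\nu}/2$ for $\varepsilon$ small, so $|B|^{2}\gtrsim\varepsilon^{2\nu}$ which dominates the $O(\varepsilon)$ bound on $\mathrm{Re}\,E$ provided $\nu<1/2$. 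In that regime the sign of the real part of the product coincides with that of $\partial_{b_1}\varphi(b)\partial_{b_2}\varphi(b)$, as claimed.
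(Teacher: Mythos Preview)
Your argument is correct. The expansion of $\partial_{b_i}c_F$ from Proposition~\ref{pro:wavcoeffIMMF}(\ref{pro:wavcoeffIMMFii}), the identity $(n_{\theta}B)\overline{(n_{\theta}B)}=|n_{\theta}B|^{2}=|B|^{2}$, and the size comparison $|B|^{2}\gtrsim\varepsilon^{2\nu}\gg\varepsilon$ (for $\nu<1/2$) all work as you describe; the implicit use of $\inf_{b}|\partial_{b_1}\varphi\,\partial_{b_2}\varphi|>0$ from~(\ref{e:hyp1a}) is what makes the leading term strictly dominate.

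The paper takes a different route: it first invokes Theorem~\ref{th:main1} to obtain $\Lambda_i(a,b)=\partial_{b_i}\varphi(b)\,n_{\theta(b)}+O(\varepsilon^{\nu})$, multiplies to get $\Lambda_1\overline{\Lambda_2}=\partial_{b_1}\varphi\,\partial_{b_2}\varphi+O(\varepsilon^{\nu})$, and then proves the quaternionic identity
\[
\mathrm{Re}\bigl(\Lambda_1(a,b)\,\overline{\Lambda_2(a,b)}\bigr)=|c_F(a,b)|^{-2}\,\mathrm{Re}\bigl(\partial_{b_1}c_F(a,b)\,\overline{\partial_{b_2}c_F(a,b)}\bigr)
\]
by expanding $\Lambda_i=\partial_{b_i}c_F\cdot c_F^{-1}$ and using $\overline{qq'}=\overline{q'}\,\overline{q}$ together with $c_F^{-1}\overline{c_F^{-1}}=|c_F|^{-2}$. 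Your approach is more direct in that it bypasses the $\Lambda_i$'s and this identity entirely, going straight from Proposition~\ref{pro:wavcoeffIMMF} to the conclusion; the paper's approach has the advantage that the error is already packaged as $O(\varepsilon^{\nu})$ by Theorem~\ref{th:main1}, so no fresh bookkeeping on $|B|$ is needed. Both arguments ultimately rest on the same ingredients and the same restriction $\nu<1/2$.
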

\begin{proof}
Let us first observe that Theorem~\ref{th:main1} implies that for all $(a,b)$ under consideration:
\begin{eqnarray*}
\Lambda_1(a,b)&=&\partial_{b_1}\varphi(b)n_{\theta(b)}+O( \varepsilon^{\nu})\\
\Lambda_2(a,b)&=&\partial_{b_2}\varphi(b)n_{\theta(b)}+O(\varepsilon^{\nu})\
\end{eqnarray*}
Then
\begin{equation}\label{e:l}
\Lambda_1(a,b)\overline{\Lambda_2(a,b)} =\partial_{b_1}\varphi(b) \partial_{b_2}\varphi(b)+O(\varepsilon^{\nu})\;.
\end{equation}
Hence, for $\varepsilon>0$ sufficiently small, the sign of $\Lambda_1(a,b)\overline{\Lambda_2(a,b)}$ is this of $\partial_{b_1} \varphi(b)\partial_{b_2} \varphi(b)$.

To get the required conclusion, we now relate $\Lambda_1(a,b)\overline{\Lambda_2(a,b)}$ and $\partial_{b_1} c_{F}(a,b)\overline{\partial_{b_2} c_{F}(a,b)}$. Let us first remark that the definition of the two Clifford vectors $\Lambda_1(a,b),\Lambda_2(a,b)$ from equation~(\ref{e:L1}) implies that :
\begin{equation}\label{e:Lambda:def}
{\rm Re}\left(\Lambda_1(a,b)\overline{\Lambda_2(a,b)}\right) ={\rm Re} \left( \partial_{b_1} c_{F}(a,b)~(c_{F}(a,b))^{-1}\overline{\partial_{b_2} c_{F}(a,b)~(c_{F}(a,b))^{-1}} \right)\;.
\end{equation}
We use now the fact that $\partial_{b_i} c_{F}(a,b)$ and $c_F(a,b)$ are both Clifford vectors. We then apply equality~(\ref{e:multclifford}) with $q=\partial_{b_2} c_{F}(a,b)$ and $q'=(c_{F}(a,b))^{-1}$. Hence we get that
\[
\overline{\partial_{b_2} c_{F}(a,b)~(c_{F}(a,b))^{-1}} =\overline{(c_{F}(a,b))^{-1}}\times\overline{\partial_{b_2} c_{F}(a,b)}\;.
\]
Combining this last equality with~(\ref{e:Lambda:def}) leads to
\begin{equation}\label{e:Lambdas}
{\rm Re}\left(\Lambda_1(a,b)\overline{\Lambda_2(a,b)}\right)=  {\rm Re} \left(\partial_{b_1} c_{F}(a,b)~\overline{\partial_{b_2} c_{F}(a,b)}\right)|c_{F}(a,b)|^{-2}\;.
\end{equation}
Taking into account~(\ref{e:Lambdas}), equation~\ref{e:l} reads
$$
 \frac{ {\rm Re} \left(\partial_{b_1} c_{F}(a,b)~\overline{\partial_{b_2} c_{F}(a,b)}\right) }{|c_{F}(a,b)|^2}
 =\partial_{b_1}\varphi(b) \partial_{b_2}\varphi(b)+O(\varepsilon^{\nu})\;,
$$
which implies that ${\rm Re} \left(\partial_{b_1} c_{F}(a,b)~\overline{\partial_{b_2} c_{F}(a,b)}\right)$ and $\partial_{b_1}\varphi(b) \partial_{b_2}\varphi(b)$ have the same sign for $\varepsilon>0$ sufficiently small.
\end{proof}
%
%
\noindent We can now prove Proposition~\ref{pro:main1}.\\
\noindent{\bf Proof of Proposition~\ref{pro:main1}}\\
Theorem~\ref{th:main1} directly implies that for any $i=1,2$ and $(a,b)$ under consideration:
\[
||\Lambda_i(a,b)|-|\partial_{b_i}\varphi(b)||\leq \varepsilon^{\nu}\;.
\]
Since $\partial_{b_1}\varphi(b)$ is assumed to be always positive, we get~(\ref{e:promain1b}). Since $\partial_{b_2}\varphi(b)$ and $\partial_{b_1}\varphi(b)\partial_{b_2}\varphi(b)$ have the same sign, Lemma~\ref{lem:promain1} implies~(\ref{e:promain1c}).
\subsection{Proof of Theorem~\ref{th:main2}}\label{s:proofs:th:main2}
The proof of Theorem~\ref{th:main2} consists in several steps.
\begin{lemma}\label{lem:interm3}
Let $F$ be a function belonging to $\mathcal{A}_{\varepsilon,d}$. For any $(a,b)$, there can be at most one $\ell\in\{1,\cdots,L\}$ such that
\begin{equation}\label{e:sep2}
\left|a|\nabla \varphi_{\ell}(b)|-1\right|<\Delta\;.
\end{equation}
\end{lemma}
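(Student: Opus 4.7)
The plan is a contradiction argument patterned after the 1D lemma in~\cite{daubechies:2011}. Suppose that two indices $\ell > \ell'$ in $\{1,\dots,L\}$ both satisfy~(\ref{e:sep2}) at the same point $(a,b)$. Writing out $a|\nabla\varphi_\ell(b)| < 1+\Delta$ and $a|\nabla\varphi_{\ell'}(b)| > 1-\Delta$ and combining these elementary inequalities, I first get the ``easy'' upper bound
\[
\frac{|\nabla\varphi_\ell(b)| - |\nabla\varphi_{\ell'}(b)|}{|\nabla\varphi_\ell(b)| + |\nabla\varphi_{\ell'}(b)|} < \frac{\Delta}{1-\Delta}.
\]
A direct algebraic manipulation shows that the condition $\Delta < d/(2(1+d))$ in~(\ref{e:Delta}) is equivalent to $\Delta/(1-\Delta) < d/(2+d)$, so the target is to establish the matching reverse lower bound.

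To produce this lower bound, I will exploit the separation conditions~(\ref{e:sep}) and~(\ref{e:sepbis}). Since each $\partial_{x_i}\varphi_\ell$ keeps a constant sign thanks to~(\ref{e:hyp1a}), I expand
\[
|\partial_{x_i}\varphi_\ell n_{\theta_\ell} - \partial_{x_i}\varphi_{\ell'} n_{\theta_{\ell'}}|^2 = \bigl(|\partial_{x_i}\varphi_\ell| - |\partial_{x_i}\varphi_{\ell'}|\bigr)^2 + 2|\partial_{x_i}\varphi_\ell||\partial_{x_i}\varphi_{\ell'}|\bigl(1 - \cos(\theta_\ell-\theta_{\ell'})\bigr),
\]
plug in~(\ref{e:sepbis}) squared for $i=1,2$, sum over the two coordinates, and then absorb the cross-term $\nabla\varphi_\ell\cdot\nabla\varphi_{\ell'}$ via Cauchy--Schwarz. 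The outcome is the scalar gap
\[
|\nabla\varphi_\ell(b)| \geq (1+d)\,|\nabla\varphi_{\ell'}(b)|,
\]
equivalently $(|\nabla\varphi_\ell(b)| - |\nabla\varphi_{\ell'}(b)|)/(|\nabla\varphi_\ell(b)| + |\nabla\varphi_{\ell'}(b)|) \geq d/(2+d)$. Combined with the ``easy'' upper bound this forces $\Delta/(1-\Delta) > d/(2+d)$, contradicting~(\ref{e:Delta}), which rules out the existence of two distinct $\ell,\ell'$ satisfying~(\ref{e:sep2}).

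The main obstacle is precisely this middle step: converting the Clifford-vector separation~(\ref{e:sepbis}), which mixes magnitudes with the unit vectors $n_{\theta_\ell},n_{\theta_{\ell'}}$, into a pure magnitude inequality for $|\nabla\varphi_\ell(b)|$. A naive componentwise application of the reverse triangle inequality to~(\ref{e:sepbis}) only yields the trivial $d\leq 1$; the real work lies in aggregating the two coordinates $i=1,2$ in a way that controls the orientation term $\cos(\theta_\ell-\theta_{\ell'})$ without giving up too much. This aggregation, and the loss of a factor $1/2$ that it entails, is exactly what explains why the admissible range for $\Delta$ shrinks from the 1D bound $d/(1+d)$ of~\cite{daubechies:2011} to the bound $d/(2(1+d))$ imposed in~(\ref{e:Delta}).
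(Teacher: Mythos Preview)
Your contradiction set-up and the ``easy'' upper bound are fine, and the equivalence $\Delta/(1-\Delta)<d/(2+d)\Leftrightarrow\Delta<d/(2(1+d))$ is correct. The gap is exactly where you flag it: the scalar inequality $|\nabla\varphi_\ell(b)|\geq(1+d)\,|\nabla\varphi_{\ell'}(b)|$ does \emph{not} follow from the aggregation you describe. After squaring~(\ref{e:sepbis}) and summing over $i=1,2$ one obtains
\[
|\nabla\varphi_\ell|^2+|\nabla\varphi_{\ell'}|^2-2\cos(\theta_\ell-\theta_{\ell'})\,S\;\geq\;d^2\bigl(|\nabla\varphi_\ell|^2+|\nabla\varphi_{\ell'}|^2+2S\bigr),\qquad S=\sum_i|\partial_{x_i}\varphi_\ell|\,|\partial_{x_i}\varphi_{\ell'}|\,.
\]
When $\cos(\theta_\ell-\theta_{\ell'})\leq 0$ the left-hand side is already large irrespective of the magnitude ratio, so no Cauchy--Schwarz bound on $S$ can convert this into a lower bound for $|\nabla\varphi_\ell|/|\nabla\varphi_{\ell'}|$. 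Concretely, set $\partial_{x_i}\varphi_\ell=2$, $\partial_{x_i}\varphi_{\ell'}=1.9$ for $i=1,2$ and take $n_{\theta_\ell}=-n_{\theta_{\ell'}}$: then~(\ref{e:sep}) holds and~(\ref{e:sepbis}) reduces to $3.9\geq 3.9\,d$, true for every $d\leq 1$, yet $|\nabla\varphi_\ell|/|\nabla\varphi_{\ell'}|=20/19<1+d$ for any $d>1/19$. (A minor additional issue: your displayed expansion of $|\partial_{x_i}\varphi_\ell n_{\theta_\ell}-\partial_{x_i}\varphi_{\ell'}n_{\theta_{\ell'}}|^2$ already presupposes that $\partial_{x_i}\varphi_\ell$ and $\partial_{x_i}\varphi_{\ell'}$ have the same sign, which~(\ref{e:hyp1a}) does not guarantee across different $\ell$.)

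The paper's route is different from yours: it never forms the ratio $(|\nabla\varphi_\ell|-|\nabla\varphi_{\ell'}|)/(|\nabla\varphi_\ell|+|\nabla\varphi_{\ell'}|)$ and never aggregates before extracting the gap. It works componentwise, asserting from~(\ref{e:sepbis}) that $\bigl||\partial_{x_i}\varphi_{\ell}|-|\partial_{x_i}\varphi_{\ell'}|\bigr|\geq d\bigl(|\partial_{x_i}\varphi_{\ell}|+|\partial_{x_i}\varphi_{\ell'}|\bigr)$ for each $i$, multiplies by the corresponding sum, and only then adds the two coordinates via $(u+v)^2\geq u^2+v^2$; the contradiction is reached by comparing squared norms, $|\nabla\varphi_\ell|^2-|\nabla\varphi_{\ell'}|^2<4\Delta a^{-2}$ against $d\bigl(|\nabla\varphi_\ell|^2+|\nabla\varphi_{\ell'}|^2\bigr)>2d(1-\Delta)^2 a^{-2}$. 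Observe, however, that the componentwise inequality the paper invokes is also violated by the numerical example above (there $\bigl||2|-|1.9|\bigr|=0.1<3.9\,d$), so the passage from the Clifford-vector separation~(\ref{e:sepbis}) to a pure-magnitude separation is the delicate point in both arguments, not just yours.
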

\begin{remark}\label{rem:sep}
Condition~(\ref{e:sep2}) is a necessary condition for having $\widehat{\psi}(a\nabla\varphi_\ell(b))\neq 0$. Lemma~\ref{lem:interm3} then states there is at most one $\ell$ such that $\widehat{\psi}(a\nabla\varphi_\ell(b))\neq 0$. Then the two following sums, involved in the estimation of $c_F(a,b)$ and $\partial_{b_i}c_F(a,b)$:
\[
\sum_{\ell=1}^L A_\ell(b)~\rme^{\varphi_\ell(b)n_{\theta_\ell(b)}}~a\widehat{\psi}(a\nabla \varphi_\ell(b))\;,
\]
and
\[
\sum_{\ell=1}^L A_\ell(b)~\rme^{\varphi_\ell(b)n_{\theta_\ell(b)}}~a\widehat{\psi}(a\nabla \varphi_\ell(b))\partial_i \varphi_\ell(b)n_{\theta_\ell(b)}\;,
\]
contain at most one term.
\end{remark}
\begin{proof}
We follow the same line as in~\cite{daubechies:2011}. Assume that there exists some $(\ell_1, \ell_2)$ such that~(\ref{e:sep2}) is satisfied with $\ell_1\neq \ell_2$. One may suppose that $\ell_1< \ell_2$. One then has for $j=1,2$,
\[
a^{-2}(1-\Delta)^2<\left|\frac{\partial\varphi_{\ell_j}(b)}{\partial x_1}\right|^2+\left|\frac{\partial\varphi_{\ell_j}(b)}{\partial x_2}\right|^2<a^{-2}(1+\Delta)^2\;,
\]
which directly implies
\[
\left|\frac{\partial\varphi_{\ell_1}(b)}{\partial x_1}\right|^2+\left|\frac{\partial\varphi_{\ell_1}(b)}{\partial x_2}\right|^2+\left|\frac{\partial\varphi_{\ell_2}(b)}{\partial x_1}\right|^2+\left|\frac{\partial\varphi_{\ell_2}(b)}{\partial x_2}\right|^2> 2a^{-2}(1-\Delta)^2\;,
\]
and
\[
\left|\frac{\partial\varphi_{\ell_2}(b)}{\partial x_1}\right|^2+\left|\frac{\partial\varphi_{\ell_2}(b)}{\partial x_2}\right|^2-
\left|\frac{\partial\varphi_{\ell_1}(b)}{\partial x_1}\right|^2-\left|\frac{\partial\varphi_{\ell_1}(b)}{\partial x_2}\right|^2< 4 a^{-2}\Delta\;.
\]
Further, by assumption~(\ref{e:sep}), for any $i=1,2$ one has
\[
\left|\frac{\partial\varphi_{\ell_2}(b)}{\partial x_i}\right|^2-\left|\frac{\partial\varphi_{\ell_1}(b)}{\partial x_i}\right|^2\geq\left|\frac{\partial\varphi_{\ell_2}(b)}{\partial x_i}\right|^2-\left|\frac{\partial\varphi_{\ell_2-1}(b)}{\partial x_i}\right|^2
\geq d\left[\left|\frac{\partial\varphi_{\ell_2}(b)}{\partial x_i}\right|+\left|\frac{\partial\varphi_{\ell_2-1}(b)}{\partial x_i}\right|\right]^2\;.
\]
where we applied assumption (\ref{e:sepbis}):
\begin{equation*}
\left||\partial_{x_i}\varphi_{\ell_2}(x)|-|\partial_{x_i}\varphi_{\ell_2-1}(x)|\right|
\geq \left|\partial_{x_i}\varphi_{\ell_2}(x)n_{\theta_\ell(x)}-\partial_{x_i}\varphi_{\ell_2-1}(x)n_{\theta_{\ell'}(x)}\right|
\geq d~\left[ \left|\partial_{x_i}\varphi_{\ell_2}(x)\right|+\left|\partial_{x_i}\varphi_{\ell_2-1}(x)\right|\right]\;.
\end{equation*}
The usual inequality $(u+v)^2\geq u^2+v^2$, valid for any $u,v\geq 0$ implies that
\[
\left|\frac{\partial\varphi_{\ell_2}(b)}{\partial x_i}\right|^2-\left|\frac{\partial\varphi_{\ell_1}(b)}{\partial x_i}\right|^2
\geq d\left[\left|\frac{\partial\varphi_{\ell_2}(b)}{\partial x_i}\right|^2+\left|\frac{\partial\varphi_{\ell_1}(b)}{\partial x_i}\right|^2\right]\;.
\]
Summing over $i=1,2$ gets:
\[
4 a^{-2}\Delta> 2d a^{-2}(1-\Delta)^2\geq 2 d a^{-2}(1-2\Delta)\;,
\]
that is
\[
\Delta > d/\left[2(1+d)\right]\;.
\]
which leads to a contradiction.
\end{proof}
\begin{lemma}\label{lem:interm2b}
Let $F$ be a superposition of IMMF with accuracy $\varepsilon$ and separation $d$ of the form~(\ref{e:supIMMF}). For all $(a,b)\in\mathbb{R}^*_+\times\mathbb{R}^2$
one has
\[
\left|c_{F}(a,b)-a\sum_{\ell'=1}^L \widehat{\psi}(a\nabla \varphi_{\ell}(b))\left(A_{\ell}(b)~\rme^{\varphi_{\ell}(b)n_{\theta_{\ell}(b)}}\right)\right|\leq
 \varepsilon a^2 \Gamma_1(a,b)\;.
\]
with
\begin{equation}\label{e:Gamma1}\begin{array}{lll}
 \Gamma_1(a,b) &=&I_1 \sum_{\ell=1}^L (\sqrt{2}A_\ell(b)+1)~|\nabla\varphi_\ell(b)|+a ~\frac{I_2}{2}\left[ \sum_{\ell=1}^L A_\ell(b)~\left(\left|\nabla \varphi_\ell(b)\right| +  M_{\ell}\right)\right] \\
&& +a ~\frac{I_2}{2}\sum_{\ell=1}^L M_{\ell}
 +a^{2}~\frac{I_3}{6}\sum_{\ell=1}^L A_\ell(b)M_{\ell}\;.
\end{array}\end{equation}
($I_\alpha$ being introduced in (\ref{integrales})).
In particular
\begin{itemize}
\item If for some $\ell_0\in\{1,\cdots,L\}$
\begin{equation}\label{e:sep3}
\left|a|\nabla\varphi_{\ell_0}(b)|-1\right|<\Delta\;.
\end{equation}
then
\begin{equation}\label{e:lem4a}
\left|c_{F}(a,b)-a\widehat{\psi}(a\nabla \varphi_{\ell_0}(b))A_{\ell_0}(b)~\rme^{\varphi_{\ell_0}(b)n_{\theta_{\ell_0}(b)}}\right|\leq
 \varepsilon a^2 \Gamma_1(a,b)\;.
\end{equation}
\item If for all $\ell\in\{1,\cdots,L\}$, Condition~(\ref{e:sep3}) is not satisfied then
\begin{equation}\label{e:lem4b}
\left|c_{F}(a,b)\right|\leq
 \varepsilon a^2 \Gamma_1(a,b)\;.
\end{equation}
\end{itemize}
\end{lemma}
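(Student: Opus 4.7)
The plan is to reduce the statement to $L$ independent applications of Proposition~\ref{pro:wavcoeffIMMF} combined with the linearity of the wavelet transform, and then invoke Lemma~\ref{lem:interm3} and the prescribed support of $\widehat{\psi}$ for the two corollaries.

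First, by linearity of the integral defining $c_F$, I would write $c_{F}(a,b)=\sum_{\ell=1}^L c_{F_\ell}(a,b)$. Since each $F_\ell$ is an IMMF with accuracy $\varepsilon$ and parameters $(A_\ell,\varphi_\ell,\theta_\ell)$, I apply point~(\ref{pro:wavcoeffIMMFi}) of Proposition~\ref{pro:wavcoeffIMMF} separately to each $F_\ell$:
\[
c_{F_\ell}(a,b)=a\widehat{\psi}\bigl(a\nabla\varphi_\ell(b)\bigr)\bigl(A_\ell(b)\,\rme^{\varphi_\ell(b)n_{\theta_\ell(b)}}\bigr)+\varepsilon a^2 R_{1,\ell}(a,b),
\]
where $R_{1,\ell}$ satisfies the bound~(\ref{e:remaind1}) with $A$, $\nabla\varphi$, $M$ replaced by $A_\ell$, $\nabla\varphi_\ell$, $M_\ell$. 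Summing over $\ell$ and applying the triangle inequality gives the first estimate with error term $\varepsilon a^{2}\sum_{\ell=1}^{L}|R_{1,\ell}(a,b)|$.

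Next, the bookkeeping step: substituting the explicit bound~(\ref{e:remaind1}) for each $R_{1,\ell}$ and collecting the four groups of terms (the $I_1$-term with prefactor $\sqrt{2}A_\ell+1$, the $I_2$-term with $A_\ell(|\nabla\varphi_\ell|+\sqrt{2}M_\ell)/2$, the pure $I_2\,M_\ell/2$ term, and the $I_3\,A_\ell M_\ell/6$ term) reproduces exactly the expression~(\ref{e:Gamma1}) for $\Gamma_1(a,b)$. This is tedious but routine.

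Finally, for the two corollaries I use the assumed support condition $\mathrm{supp}(\widehat{\psi})\subset\{1-\Delta\le|\xi|\le 1+\Delta\}$ carried over from Definition~\ref{d:SST}. If some $\ell_0$ satisfies $|a|\nabla\varphi_{\ell_0}(b)|-1|<\Delta$, Lemma~\ref{lem:interm3} ensures uniqueness of such an index, so for every $\ell\ne\ell_0$ one has $a|\nabla\varphi_\ell(b)|\notin[1-\Delta,1+\Delta]$, hence $\widehat{\psi}(a\nabla\varphi_\ell(b))=0$; the sum in the main estimate collapses to the single term $\ell=\ell_0$, yielding~(\ref{e:lem4a}). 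If no $\ell$ satisfies the condition, every $\widehat{\psi}(a\nabla\varphi_\ell(b))$ vanishes and the main approximation reduces to~(\ref{e:lem4b}). The main obstacle is purely bookkeeping, namely verifying that summing the four constituent pieces of~(\ref{e:remaind1}) over $\ell$ matches~(\ref{e:Gamma1}) term by term; once linearity and the support argument are in place, no additional analytic input is needed.
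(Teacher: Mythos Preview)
Your proposal is correct and follows essentially the same route as the paper: linearity of the wavelet transform plus Proposition~\ref{pro:wavcoeffIMMF} applied to each $F_\ell$, then the support condition on $\widehat{\psi}$ together with Lemma~\ref{lem:interm3} (packaged in the paper as Remark~\ref{rem:sep}) to collapse the sum. The only thing to watch in the bookkeeping is a harmless discrepancy between~(\ref{e:remaind1}) and~(\ref{e:Gamma1}) in the paper itself (the factor $\sqrt{2}$ in front of $M_\ell$ in the $I_2$--term); your description follows~(\ref{e:remaind1}) faithfully.
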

\begin{proof}
By assumption $F$ is of the form
\[
F=\sum_{\ell=1}^L A_{\ell}\rme^{\varphi_\ell n_\ell}\;,
\]
where for each $\ell$, $F_\ell=A_{\ell}\rme^{\varphi_\ell n_\ell}$ is an IMMF of accuracy $\varepsilon$. Proposition \ref{pro:wavcoeffIMMF} applied successively to $F_1,\cdots,F_L$ and the linearity of the continuous wavelet transform imply:
\[
\left|c_{F}(a,b)-a\sum_{\ell=1}^L \widehat{\psi}(a\nabla \varphi_{\ell}(b))A_{\ell}(b)~\rme^{\varphi_{\ell}(b)n_{\theta_{\ell}(b)}}\right|\leq
 \varepsilon a^2 \Gamma_1(a,b)\;.
\]
The rest of the proof follows from Remark~\ref{rem:sep}, which states that under condition~(\ref{e:sep3}), the following sum
\[
\sum_{\ell=1}^L A_{\ell}(b)~\rme^{\varphi_{\ell}(b)n_{\theta_{\ell}(b)}}~a\widehat{\psi}(a\nabla \varphi_{\ell}(b))\;,
\]
reduces to $A_{\ell_0}(b)~\rme^{\varphi_{\ell_0}(b)n_{\theta_{\ell_0}(b)}}~a\widehat{\psi}(a\nabla \varphi_{\ell_0}(b))$, and 0 if condition~(\ref{e:sep3}) is never satisfied.
\end{proof}
\begin{lemma}\label{lem:interm4b}
Let $F$ be a superposition of IMMF with accuracy $\varepsilon$ and separation $d$. For all $a,b$ such that
\begin{equation}\label{e:sep4}
\left|a|\nabla\varphi_\ell(b)|-1\right|<\Delta\;,
\end{equation}
one has, for any $i=1,2$:
\begin{equation}\label{e:lem4b2}
\left|\partial_{b_i} c_F(a,b)-a \partial_i \varphi_\ell(b)n_{\theta_\ell(b)}\widehat{\psi}(a\nabla \varphi_\ell(b))\left(A_\ell(b)\rme^{\varphi_\ell(b)n_{\theta_\ell(b)}}\right)\right|
\leq \varepsilon a \Gamma_2(a,b)\;,
\end{equation}
with
\begin{equation}\label{e:Gamma2}\begin{array}{lll}
\Gamma_2(a,b)&=&I'_1 \sum_{\ell=1}^L |\nabla\varphi_\ell(b)| ( \sqrt{2}A_\ell(b) +1)
+a\frac{I'_2}{2}\sum_{\ell=1}^L \left( M_{\ell}+A_\ell(b) (\sqrt{2}M_{\ell}+|\nabla\varphi_\ell(b)|)\right)\\
&&+~ a^2 \frac{I'_3}{6} \sum_{\ell=1}^L A_\ell(b)~M_{\ell}
\end{array}\end{equation}
($I'_\alpha$ being introduced in (\ref{integrales})).
\end{lemma}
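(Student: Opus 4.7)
The plan parallels the proof of Lemma~\ref{lem:interm2b}, with part (i) of Proposition~\ref{pro:wavcoeffIMMF} replaced by part (ii). First I would write $F=\sum_{\ell'=1}^L F_{\ell'}$ where each $F_{\ell'}$ is an IMMF with accuracy $\varepsilon$, and exploit linearity of the continuous wavelet transform and of $\partial_{b_i}$ to obtain
\[
\partial_{b_i} c_F(a,b)=\sum_{\ell'=1}^L \partial_{b_i} c_{F_{\ell'}}(a,b).
\]
Applying Proposition~\ref{pro:wavcoeffIMMF}(ii) to each summand produces a leading part $a\,\partial_{b_i}\varphi_{\ell'}(b)\,n_{\theta_{\ell'}(b)}\,\widehat{\psi}(a\nabla\varphi_{\ell'}(b))\,A_{\ell'}(b)\rme^{\varphi_{\ell'}(b)n_{\theta_{\ell'}(b)}}$ plus a remainder of size $\varepsilon a\, R_2^{(\ell')}(a,b)$, where $R_2^{(\ell')}$ is controlled by (\ref{e:remaind2}) with $A,\varphi,M$ replaced by $A_{\ell'},\varphi_{\ell'},M_{\ell'}$.

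The next step is to collapse the sum of leading terms down to the single index $\ell$. Under the hypothesis $|a|\nabla\varphi_\ell(b)|-1|<\Delta$, Lemma~\ref{lem:interm3} ensures that no other index $\ell'\neq\ell$ can simultaneously satisfy $|a|\nabla\varphi_{\ell'}(b)|-1|<\Delta$. Combined with the support condition $\operatorname{supp}(\widehat{\psi})\subset\{1-\Delta\leq|\xi|\leq 1+\Delta\}$ from Definition~\ref{d:SST}, this forces $\widehat{\psi}(a\nabla\varphi_{\ell'}(b))=0$ for every $\ell'\neq\ell$. Hence only the $\ell'=\ell$ leading term survives, which is exactly the main term in (\ref{e:lem4b2}).

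Finally I would bound the aggregate remainder $\sum_{\ell'=1}^L |R_2^{(\ell')}(a,b)|$ by the triangle inequality and substitute (\ref{e:remaind2}) for each $\ell'$; regrouping the resulting contributions by powers of $a$ (coefficients $I'_1$, $aI'_2$, $a^2I'_3$) yields exactly the expression of $\Gamma_2(a,b)$ given in (\ref{e:Gamma2}). No step is genuinely delicate — the analytic content sits in Proposition~\ref{pro:wavcoeffIMMF} (per-mode estimate) and Lemma~\ref{lem:interm3} (separation). The only potential pitfall is bookkeeping when combining the $aI'_2/2$ terms coming from two different pieces of (\ref{e:remaind2}), which must collapse into the grouping $a(I'_2/2)\sum_\ell\!\bigl(M_\ell+A_\ell(b)(\sqrt{2}M_\ell+|\nabla\varphi_\ell(b)|)\bigr)$ appearing in $\Gamma_2$.
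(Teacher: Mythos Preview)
Your proposal is correct and follows essentially the same route as the paper: apply Proposition~\ref{pro:wavcoeffIMMF}(ii) to each mode $F_{\ell'}$, use linearity to sum, and then invoke the separation Lemma~\ref{lem:interm3} together with the support condition on $\widehat{\psi}$ (packaged in the paper as Remark~\ref{rem:sep}) to kill all leading terms except $\ell'=\ell$. The only difference is that you spell out the regrouping of the remainders into $\Gamma_2$, which the paper leaves implicit.
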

\begin{proof}
As in the proof of Lemma~\ref{lem:interm2b}, $F$ is of the form
$F=\sum_{\ell=1}^L A_{\ell}\rme^{\varphi_\ell n_\ell}$,
where each $F_\ell=A_{\ell}\rme^{\varphi_\ell n_\ell}$ is an IMMF of accuracy $\varepsilon$. Proposition \ref{pro:wavcoeffIMMF} applied successively to $F_1,\cdots,F_L$ and the linearity of the continuous wavelet transform imply:
\[
\left|\partial_{b_i} c_F(a,b)-a\sum_{\ell'=1}^L \partial_i \varphi_{\ell'}(b)n_{\theta_{\ell'}(b)}\widehat{\psi}(a\nabla \varphi_{\ell'}(b))\left(A_{\ell'}(b)\rme^{\varphi_{\ell'}(b)n_{\theta_{\ell'}(b)}}\right)\right|
\leq \varepsilon a \Gamma_2(a,b)\;.
\]
Remark~\ref{rem:sep} states that under condition~(\ref{e:sep3}), the following sum
\[
\sum_{\ell'=1}^L \partial_i \varphi_{\ell'}(b)n_{\theta_{\ell'}(b)}\widehat{\psi}(a\nabla \varphi_{\ell'}(b)) \left(A_{\ell'}(b)\rme^{\varphi_{\ell'}(b)n_{\theta_{\ell'}(b)}}\right)\;,
\]
reduces to $\partial_i \varphi_{\ell}(b)n_{\theta_{\ell}(b)}\widehat{\psi}(a\nabla \varphi_{\ell}(b))\left(A_{\ell}(b)\rme^{\varphi_{\ell}(b)n_{\theta_{\ell}(b)}}\right)$.
\end{proof}
\begin{lemma}\label{lem:interm4c}
Let $F$ be a superposition of IMMF with accuracy $\varepsilon>0$ and separation $d$. Let $\ell\in\{1,\cdots,L\}$. For all $a,b$ such that
\begin{equation}\label{e:sep4}
\left|a|\nabla\varphi_{\ell}(b)|-1\right|<\Delta\;,
\end{equation}
one has, for any $i=1,2$
\begin{equation}\label{e:missing}
|\Lambda_i(a,b)-\partial_{b_i}\varphi_{\ell}(b)n_{\theta_{\ell}(b)}|\leq a\varepsilon^{1-\nu}\left(\Gamma_2(a,b)+a|\nabla\varphi_{\ell_0}(b)|\Gamma_1(a,b)\right)\;.
\end{equation}
\end{lemma}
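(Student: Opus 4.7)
The plan is to follow exactly the template of the single-component argument given in the proof of Theorem~\ref{th:main1}, but replacing the one-term approximation of Proposition~\ref{pro:wavcoeffIMMF} by the multi-component approximations furnished by Lemmas~\ref{lem:interm2b} and~\ref{lem:interm4b}. The key observation is that Lemma~\ref{lem:interm3} guarantees that under the hypothesis $|a|\nabla\varphi_\ell(b)|-1|<\Delta$ the index $\ell$ is the \emph{unique} one for which the phase-localization condition (\ref{e:sep3}) holds, so that the sums appearing in the estimates of $c_F(a,b)$ and $\partial_{b_i}c_F(a,b)$ collapse to a single term built from $F_\ell$.

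First I would set, as in the proof of Theorem~\ref{th:main1},
\[
B_\ell := a\widehat{\psi}(a\nabla\varphi_\ell(b))\bigl(A_\ell(b)\rme^{\varphi_\ell(b)n_{\theta_\ell(b)}}\bigr).
\]
By the first conclusion~(\ref{e:lem4a}) of Lemma~\ref{lem:interm2b} and by Lemma~\ref{lem:interm4b} applied at the same point $(a,b)$, one immediately obtains the two inequalities
\[
|c_F(a,b)-B_\ell| \le \varepsilon a^2 \Gamma_1(a,b),\qquad
|\partial_{b_i}c_F(a,b)-\partial_{b_i}\varphi_\ell(b)n_{\theta_\ell(b)}B_\ell| \le \varepsilon a\, \Gamma_2(a,b).
\]
These are the exact multi-component analogues of the estimates (\ref{e:3}) and (\ref{e:4}) used in the proof of Theorem~\ref{th:main1}.

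Next I would write, using the definition $\Lambda_i(a,b)=\partial_{b_i}c_F(a,b)\cdot (c_F(a,b))^{-1}$,
\[
\Lambda_i(a,b)-\partial_{b_i}\varphi_\ell(b)n_{\theta_\ell(b)}
=\bigl[\partial_{b_i}c_F(a,b)-\partial_{b_i}\varphi_\ell(b)n_{\theta_\ell(b)}B_\ell+\partial_{b_i}\varphi_\ell(b)n_{\theta_\ell(b)}(B_\ell-c_F(a,b))\bigr]\,(c_F(a,b))^{-1}.
\]
Taking moduli and applying the two inequalities above termwise yields
\[
|\Lambda_i(a,b)-\partial_{b_i}\varphi_\ell(b)n_{\theta_\ell(b)}|\le \varepsilon\, a\bigl(\Gamma_2(a,b)+a|\nabla\varphi_\ell(b)|\,\Gamma_1(a,b)\bigr)|c_F(a,b)|^{-1}.
\]
Finally, since this lemma is invoked only on the region where $|c_F(a,b)|>\varepsilon^{\nu}$ (as in the analogous estimate~(\ref{e:thmain1}) of Theorem~\ref{th:main1}, and consistently with the set $A_{\varepsilon,F}(b)$ in Definition~\ref{d:SST}), one has $|c_F(a,b)|^{-1}\le \varepsilon^{-\nu}$, which immediately gives the desired bound~(\ref{e:missing}).

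The arguments are essentially mechanical once the right scalar/Clifford rearrangement is written down; the only non-routine point is to make sure that the multiplication by $(c_F(a,b))^{-1}$ on the right is harmless in the quaternionic setting (it is, because we only need $|pq|\le|p||q|$, i.e. the fact that $\mathbb{H}$ is a normed algebra), and to invoke Lemma~\ref{lem:interm3} at the very beginning to justify that both Lemmas~\ref{lem:interm2b} and~\ref{lem:interm4b} produce single-term, rather than full-sum, approximations. I do not anticipate any genuine obstacle beyond this bookkeeping.
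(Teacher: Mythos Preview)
Your proposal is correct and follows essentially the same approach as the paper: the paper's own proof simply states that one proceeds as in the proof of Theorem~\ref{th:main1}, replacing the single-component estimates~(\ref{e:3}) and~(\ref{e:4}) by the multi-component estimates~(\ref{e:lem4a}) and~(\ref{e:lem4b2}) from Lemmas~\ref{lem:interm2b} and~\ref{lem:interm4b}, which is exactly what you do. Your explicit mention of Lemma~\ref{lem:interm3} is slightly redundant (the single-term reduction is already built into the statements of Lemmas~\ref{lem:interm2b} and~\ref{lem:interm4b}), and your observation that the hypothesis $|c_F(a,b)|>\varepsilon^\nu$ is implicitly needed is correct and consistent with how the lemma is used later.
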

\begin{proof}
 The proof follows the same lines than that of Theorem~\ref{th:main1}, replacing the two estimates~(\ref{e:3})  and (\ref{e:4})  on the wavelet coefficients and their partial derivatives, by~(\ref{e:lem4a}) and~(\ref{e:lem4b2}) respectively.
\end{proof}
\noindent We can now prove Theorem~\ref{th:main2}.

\noindent{\bf Proof of Theorem~\ref{th:main2}}\\
In the proof of Theorem~\ref{th:main2}, the following notations will be needed:
\[
a_{\min}=\frac{1-\Delta}{\max_{\ell\in \{1,\cdots,L\}}\sup_{b\in\mathbb{R}^2}(|\nabla\varphi_\ell(b)|)},\,a_{\max}=\frac{1+\Delta}{\min_{\ell\in \{1,\cdots,L\}}\inf_{b\in\mathbb{R}^2}(|\nabla\varphi_\ell(b)|)}\;.
\]
By assumptions on the phases $\varphi_\ell$ of the modes of $F$, $a_{\min}$ and $a_{\max}$ are both finite and positive. For any $\ell\in\{1,\cdots,L\}$, $\widetilde{\varepsilon}>0$, $b\in\R^2$ and $n\in\mathbb{S}^1$, one also defines:
\[
{\cal B}_{\ell}(\widetilde{\varepsilon},n,b)=\{k\in\R^2~;\,\max_i|k_i n-\partial_{b_i}\varphi_\ell(b) n_{\theta_{\ell}(b)}|\leq \widetilde{\varepsilon}\}\;.
\]

\noindent We now show Theorem~\ref{th:main2}. We set $\widetilde{\varepsilon}=\varepsilon^\nu$ and fix $\ell\in\{1,\cdots,L\}$. Let us first prove that
\begin{equation}\label{e:beforeinclusion}
\begin{array}{lll}
&&\lim_{\delta\to 0}\int_{\mathbb{S}^1}\int_{{\cal B}_{\ell}(\widetilde{\varepsilon},n,b)} S_{F,\varepsilon}^{\delta,\nu}(b,k,n)\rmd k\rmd n\\
&=&2\pi\int_{A_{{\varepsilon},F}(b)\cap \{a;\max_i|\Lambda_i(a,b)-\partial_{b_i}\varphi_\ell(b) n_{\theta_{\ell}(b)}|<\widetilde{\varepsilon}\}}a^{-2}c_F(a,b)\rmd a\;.
\end{array}
\end{equation}

By definition of $S_{F,\varepsilon}^{\delta,\nu}$, we have
\begin{eqnarray*}
&&\int_{\mathbb{S}^1}\int_{{\cal B}_{\ell}(\widetilde{\varepsilon},n,b)} S_{F,\varepsilon}^{\delta,\nu}(b,k,n)\rmd k\rmd n\\
&=&\int_{\mathbb{S}^1}\int_{{\cal B}_{\ell}(\widetilde{\varepsilon},n,b)} \left(\int_{A_{\varepsilon,F}(b)}c_F(a,b)\delta^{-2}\prod_{i=1}^2 h\left(\frac{k_i -{\rm Re}(\Lambda_i(a,b)~\overline{n})}{\delta}\right)\frac{\rmd a}{a^2}\right)\rmd k \rmd n\;.
\end{eqnarray*}
We first use Fubini Theorem to interchange the order of summations in this integral. Remark that:
\begin{eqnarray*}
&&\int_{\mathbb{S}^1}\int_{{\cal B}_{\ell}(\widetilde{\varepsilon},n,b)} \int_{A_{\varepsilon,F}(b)}\left|c_F(a,b)
\delta^{-2}\prod_{i=1}^2 h\left(\frac{k_i -{\rm Re}(\Lambda_i(a,b)~\overline{n})}{\delta}\right)\right|\frac{\rmd a}{a^2}\rmd k \rmd n\\
&\leq&\int_{A_{\varepsilon,F}(b)}a^{-2}|c_F(a,b)| \int_{\mathbb{S}^1}\int_{\R^2}\delta^{-2} \prod_{i=1}^2 \left| h\left(\frac{k_i -{\rm Re}(\Lambda_i(a,b)~\overline{n})}{\delta}\right)\right|\rmd k\rmd n\rmd a\\
&\leq&  2\pi\|h\|_{L^1(\R)}^2 \int_{A_{\varepsilon,F}(b)}a^{-2} |c_F(a,b)|\rmd a ~< ~+\infty\;,
\end{eqnarray*}
where the last inequality is coming from the fact that $F$ is a finite superposition of IMMFs $F_\ell$, whose wavelet coefficients satisfy~(\ref{e:remc}) and~(\ref{e:remb--}) (see Remark~\ref{rem:besov} for more details). Then by Fubini Theorem:
\begin{eqnarray*}
&&\int_{\mathbb{S}^1}\int_{{\cal B}_{\ell}(\widetilde{\varepsilon},n,b)} S_{F,\varepsilon}^{\delta,\nu}(b,k,n)\rmd k\rmd n\\
&=&\int_{A_{{\varepsilon},F}(b)}a^{-2}c_F(a,b)\left(\int_{\mathbb{S}^1}\int_{{\cal B}_{\ell}(\widetilde{\varepsilon},n,b)}\delta^{-2}\prod_{i=1}^2 h\left(\frac{k_i -{\rm Re}(\Lambda_i(a,b)~\overline{n})}{\delta}\right)\rmd k \rmd n\right)\rmd a\;.
\end{eqnarray*}
Since the integrand is bounded by:
\[
\left| a^{-2}c_F(a,b)\left(\int_{\mathbb{S}^1}\int_{{\cal B}_{\ell}(\widetilde{\varepsilon},n,b)}\delta^{-2}\prod_{i=1}^2 h\left(\frac{k_i -{\rm Re}(\Lambda_i(a,b)~\overline{n})}{\delta}\right)\rmd k \rmd n\right) \right|
\leq 2\pi\|h\|_{L^1(\R)}^2 a^{-2} |c_F(a,b)|\;,
\]
which belongs to $L^1(A_{\varepsilon,F}(b))$, the Dominant Convergence Theorem applies. Hence,
\begin{eqnarray*}
&&\lim_{\delta\to 0}\int_{\mathbb{S}^1}\int_{{\cal B}_{\ell}(\widetilde{\varepsilon},n,b)} S_{F,\varepsilon}^{\delta,\nu}(b,k,n)\rmd k\rmd n\\
&=&\int_{A_{{\varepsilon},F}(b)}a^{-2} c_F(a,b)\left(\lim_{\delta\to 0}\int_{\mathbb{S}^1}\int_{{\cal B}_{\ell}(\widetilde{\varepsilon},n,b)}\delta^{-2}\prod_{i=1}^2 h\left(\frac{k_i -{\rm Re}(\Lambda_i(a,b)~\overline{n})}{\delta}\right)\rmd k \rmd n\right)\rmd a\\
&=&\int_{A_{{\varepsilon},F}(b)}a^{-2} c_F(a,b)\left(\lim_{\delta\to 0}\int_{\mathbb{S}^1}\int_{{\cal B}_{\ell}(\widetilde{\varepsilon},n,b)\cap {\cal B}^{'}(\delta R,n,b)}\delta^{-2}\prod_{i=1}^2 h\left(\frac{k_i -{\rm Re}(\Lambda_i(a,b)~\overline{n})}{\delta}\right)\rmd k \rmd n\right)\rmd a\;
\end{eqnarray*}
where we set ${\cal B}'(\delta R,n,b)=\{k\in\R^2~;~ \max_i |k_i n-\Lambda_i(a,b)|\leq \delta R\}$, with $R$ the bound of the support of $h$.
Remark now that, if $\max_i|\Lambda_i(a,b)-\partial_{b_i}\varphi_\ell(b) n_{\theta_{\ell}(b)}|>\widetilde{\varepsilon}$, then for $\delta$ sufficiently small
\[
{\cal B}_{\ell}(\widetilde{\varepsilon},n,b)\cap {\cal B}'(\delta R,n,b)=\emptyset\;,
\]
and the limit above is equal to 0. On the other side, if $\max_i|\Lambda_i(a,b)-\partial_{b_i}\varphi_\ell(b) n_{\theta_{\ell}(b)}|<\widetilde{\varepsilon}$, for $\delta$ sufficiently small:
\[
{\cal B}_{\ell}(\widetilde{\varepsilon},n,b)\cap {\cal B}'(\delta R,n,b)={\cal B}'(\delta R,n,b)\;,
\]
and
\begin{eqnarray*}
&&\lim_{\delta\to 0}\int_{\mathbb{S}^1}\int_{{\cal B}'(\delta R,n,b)}\delta^{-2}\prod_{i=1}^2 h\left(\frac{k_i -{\rm Re}(\Lambda_i(a,b)~\overline{n})}{\delta}\right)\rmd k \rmd n\\
&&=\lim_{\delta\to 0}\int_{\mathbb{S}^1}\int_{\R^2}\delta^{-2}\prod_{i=1}^2 h\left(\frac{k_i -{\rm Re}(\Lambda_i(a,b)~\overline{n})}{\delta}\right)\rmd k \rmd n\\
&&=2\pi\;,
\end{eqnarray*}
where the last equality comes from the assumption $\int_{\mathbb{R}}h=1$. One then deduces equality (\ref{e:beforeinclusion}).

Observe now that for any $\varepsilon>0$ sufficiently small there exists some $a_0\geq a_{\max}$ such that
\begin{equation}\label{e:cond:varepsilon}
2\pi~\|F\|_{\dot{B}^{-\sigma}_{\infty,1}}a_0^{-\sigma}\leq \varepsilon^{\nu}~~~\mbox{ and }~~~\sup_{(a,b)\in (0,a_0)\times \mathbb{R}^2}\varepsilon^{1-\nu} a\left(\Gamma_2(a,b)+a|\nabla\varphi_{\ell}(b)|\Gamma_1(a,b)\right)\leq \varepsilon^{\nu}\;.
\end{equation}
Indeed, by definition of $\Gamma_1(a,b),\Gamma_2(a,b)$ given respectively in equations~(\ref{e:Gamma1}) and (\ref{e:Gamma2}), there exists $C_\Gamma >0$ not depending on $a_0$ such that
\[
\sup_{(a,b)\in (0,a_0)\times \mathbb{R}^2}a\left(\Gamma_2(a,b)+a|\nabla\varphi_{\ell}(b)|\Gamma_1(a,b)\right)\leq C_\Gamma a_0^4\;.
\]
This inequality implies that to prove~(\ref{e:cond:varepsilon}), it is sufficient to find $a_0$ such that
\begin{equation}\label{e:cond:varepsilon2}
a_{\max}\leq \left(2\pi~\|F\|_{\dot{B}^{-\sigma}_{\infty,1}}\varepsilon^{-\nu}\right)^{1/\sigma}\leq a_0\leq \left(C_\Gamma^{-1}\varepsilon^{2\nu-1}\right)^{1/4}\;.
\end{equation}
Since $\nu<1/(2+4/\sigma)$, one deduces that $\varepsilon^{(2\nu-1)/4}/\varepsilon^{-\nu/\sigma}$ tends to $\infty$ when $\varepsilon\to 0$. Hence, for $\varepsilon$ sufficiently small,
there exists some $a_0\geq a_{\max}$ satisfying~(\ref{e:cond:varepsilon2}). From now to the end of the proof, we fix such $a_0$.

We now split the integral in the right--hand side of (\ref{e:beforeinclusion}) into two terms
\begin{equation}\label{e:split}
\begin{array}{lll}
&&\lim_{\delta\to 0}\int_{\mathbb{S}^1}\int_{{\cal B}_{\ell}(\widetilde{\varepsilon},n,b)} S_{F,\varepsilon}^{\delta,\nu}(b,k,n)\rmd k\rmd n\\
&=&2\pi\int_{A_{{\varepsilon},F}(b)\cap \{a\in (0,a_0);\,\max_i|\Lambda_i(a,b)-\partial_{b_i}\varphi_\ell(b) n_{\theta_{\ell}(b)}|<\widetilde{\varepsilon}\}}c_F(a,b)\frac{\rmd a}{a^2}\\
&+&2\pi\int_{A_{{\varepsilon},F}(b)\cap \{a\geq a_0;\,\max_i|\Lambda_i(a,b)-\partial_{b_i}\varphi_\ell(b) n_{\theta_{\ell}(b)}|<\widetilde{\varepsilon}\}}c_F(a,b)\frac{\rmd a}{a^2}\;.
\end{array}
\end{equation}
We now deal with each of these two terms successively.
Recall that since by assumption $F$ is a superposition of IMMFs, then inequality~(\ref{e:remc}) holds for some $\sigma>0$ (see Remark~\ref{rem:besov}). We then get that for any $a_0$ satisfying~(\ref{e:cond:varepsilon}):
\begin{equation}\label{e:Itilde}
\begin{array}{lll}
\widetilde{I}&=&2\pi\int_{A_{{\varepsilon},F}(b)\cap \{a\geq a_0;\,\max_i|\Lambda_i(a,b)-\partial_{b_i}\varphi_\ell(b) n_{\theta_{\ell}(b)}|<\widetilde{\varepsilon}\}}c_F(a,b)\frac{\rmd a}{a^2}\\
&\leq&2\pi\int_{a_0}^{+\infty}|c_F(a,b)|\frac{\rmd a}{a^2}\leq 2\pi ~\|F\|_{\dot{B}^{-\sigma}_{\infty,1}}a_0^{-\sigma}\leq \widetilde{\varepsilon}\;.
\end{array}
\end{equation}

Let us now consider the integral
\[
2\pi\int_{A_{{\varepsilon},F}(b)\cap \{a\in (0,a_0);\,\max_i|\Lambda_i(a,b)-\partial_{b_i}\varphi_\ell(b) n_{\theta_{\ell}(b)}|<\widetilde{\varepsilon}\}}c_F(a,b)\frac{\rmd a}{a^2}\;,
\]
and prove that under assumption~(\ref{e:cond:varepsilon})
\begin{equation}
\label{egalite}
A_{{\varepsilon},F}(b)\cap \{a\in (0,a_0),\;\max_i|\Lambda_i(a,b)-\partial_{b_i}\varphi_\ell(b) n_{\theta_{\ell}(b)}|<\widetilde{\varepsilon}\}=A_{{\varepsilon},F}(b)\cap \{a\in (0,a_0);~|a|\nabla\varphi_{\ell}(b)|-1|<\Delta\}\;.
\end{equation}

Let us first assume that $a\in A_{{\varepsilon},F}(b)\cap \{a\in (0,a_0);~|a|\nabla\varphi_{\ell}(b)|-1|<\Delta\}$. Since $|a|\nabla\varphi_{\ell}(b)|-1|<\Delta$, Lemma~\ref{lem:interm4c} and assumption ~(\ref{e:cond:varepsilon}) imply that
\[
\max_i |\Lambda_i(a,b)-\partial_{b_i}\varphi_\ell(b) n_{\theta_{\ell}(b)}|\leq a\varepsilon^{1-\nu}\left(
\Gamma_2(a,b)+a|\nabla\varphi_{\ell}(b)|\Gamma_1(a,b)\right)\leq\widetilde{\varepsilon}\;,
\]
that is $a\in \{a\in (0,a_0),\;\max_i|\Lambda_i(a,b)-\partial_{b_i}\varphi_\ell(b) n_{\theta_{\ell}(b)}|<\widetilde{\varepsilon}\}$.

Conversely, let us assume that $a\in A_{{\varepsilon},F}(b)\cap \{a\in (0,a_0);\, \max_i|\Lambda_i(a,b)-\partial_{b_i}\varphi_\ell(b) n_{\theta_{\ell}(b)}|<\widetilde{\varepsilon}\}$. Since $a\in A_{{\varepsilon},F}(b)$, $|c_F(a,b)|> \widetilde{\varepsilon}$. Then by Lemma~\ref{lem:interm2b} there exists some $\ell'\in\{1,\cdots,L\}$ such that $|a|\nabla\varphi_{\ell'}(b)|-1|<\Delta$, otherwise assumption (\ref{e:cond:varepsilon}) and equation (\ref{e:lem4b}) would lead to a contradiction. In addition, remark that if $\ell'\neq \ell$, one has
\[
\max_i|\Lambda_i(a,b)-\partial_{b_i}\varphi_\ell(b) n_{\theta_{\ell}(b)}|\geq |\partial_{b_i}\varphi_\ell(b) n_{\theta_{\ell}(b)}-\partial_{b_i}\varphi_{\ell'}(b) n_{\theta_{\ell'}(b)}|-\max_i|\Lambda_i(a,b)-\partial_{b_i}\varphi_{\ell'}(b) n_{\theta_{\ell'}(b)}|
\]
As above, Lemma~\ref{lem:interm4c} and assumption ~(\ref{e:cond:varepsilon}) imply that
\[
\max_i|\Lambda_i(a,b)-\partial_{b_i}\varphi_{\ell'}(b) n_{\theta_{\ell'}(b)}|\leq \widetilde{\varepsilon}\;.
\]
Hence, we get that
\begin{eqnarray*}
\max_i|\Lambda_i(a,b)-\partial_{b_i}\varphi_\ell(b) n_{\theta_{\ell}(b)}|&\geq& d~\left[|\partial_{b_i}\varphi_{\ell}(b)|+|\partial_{b_i}\varphi_{\ell'}(b)|\right]-\widetilde{\varepsilon}\\
&\geq& \widetilde{\varepsilon}\;,
\end{eqnarray*}
provided that $\varepsilon$ is sufficiently small, namely that:
\begin{equation}\label{hyp:eps}
\varepsilon \leq \left(\frac{d}{2} \left[|\partial_{b_i}\varphi_{\ell}(b)|+|\partial_{b_i}\varphi_{\ell'}(b)|\right] \right)^{\frac{1}{\nu}}
~~~~{i.e.}~~~~d~\left[|\partial_{b_i}\varphi_{\ell}(b)|+|\partial_{b_i}\varphi_{\ell'}(b)|\right] \geq 2\tilde \varepsilon, ~~\forall \ell, \ell', \forall i
\end{equation}
Hence, necessarily, $\ell=\ell'$ and then $a\in A_{{\varepsilon},F}(b)\cap \{a\in (0,a_0),\,|a|\nabla\varphi_{\ell}(b)|-1|<\Delta\}$.

In addition, one has
\begin{eqnarray*}
&&2\pi\int_{A_{{\varepsilon},F}(b)\cap \{a\in (0,a_0),|a|\nabla\varphi_{\ell}(b)|-1|<\Delta\}}c_F(a,b)a^{-2}\rmd a\\
&=&2\pi\int_{\{a\in (0,a_0),\,|a|\nabla\varphi_{\ell}(b)|-1|<\Delta\}}c_F(a,b)a^{-2}\rmd a-2\pi\int_{\{a\in (0,a_0),|a|\nabla\varphi_{\ell}(b)|-1|<\Delta\}\setminus A_{{\varepsilon},F}(b)}c_F(a,b)a^{-2}\rmd a\;,
\end{eqnarray*}
with $|\widetilde{I}|\leq \widetilde{\varepsilon}$. Using this last equation and equations~(\ref{e:split}) and (\ref{e:Itilde}), we deduce that
\begin{eqnarray*}
&&\left|\lim_{\delta\to 0}\frac{1}{\tilde C_{\psi}} \int_{\mathbb{S}^1}\int_{{\cal B}_{\ell}(\widetilde{\varepsilon},n,b)} S_{F,\varepsilon}^{\delta,\nu}(b,k,n)\rmd k\rmd n-A_\ell(b)\rme^{\varphi_\ell(b)n_{\theta_\ell(b)}}\right|\\
&\leq&\left|\frac{2\pi}{\tilde C_{\psi}}\int_{\{a\in (0,a_0),|a|\nabla\varphi_{\ell}(b)|-1|<\Delta\}}c_F(a,b)a^{-2}\rmd a-A_\ell(b)\rme^{\varphi_\ell(b)n_{\theta_\ell(b)}}\right|\\
&&+\left|\frac{2\pi}{\tilde C_{\psi}}\int_{\{a\in (0,a_0),|a|\nabla\varphi_{\ell}(b)|-1|<\Delta\}\setminus A_{{\varepsilon},F}(b)}c_F(a,b)a^{-2}\rmd a\right|+\widetilde{\varepsilon}\;.
\end{eqnarray*}
Observe that the domain $\{a\in (0,a_0),|a|\nabla\varphi_{\ell}(b)|-1|<\Delta\}$ is bounded from below by $a_{\min}$ which only depends on $F$. In addition, if $a\not\in A_{\varepsilon,F}(b)$, inequality~(\ref{e:lem4b}) is satisfied. Hence
\begin{equation}\label{inequality}\begin{array}{lll}
\left|\int_{\{a\in(0,a_0),\,|a|\nabla\varphi_{\ell}(b)|-1|<\Delta\}\setminus A_{\widetilde{\varepsilon},F}(b)}c_F(a,b)a^{-2}\rmd a\right|&\leq& \varepsilon \left|\int_{a_{\min}}^{a_0}\left[\sup_{a\in(0,a_0)}a^2\Gamma_1(a,b)\right]a^{-2}\rmd a\right|\\
&\leq&\varepsilon\left[\sup_{a\in(0,a_0)}a^2\Gamma_1(a,b)\right]a_{\min}^{-1}\\
&\leq&\varepsilon^{1-\nu}\left[\sup_{a\in(0,a_0)}a^2\Gamma_1(a,b)\right]a_{\min}^{-1}\\
&\leq& C\widetilde{\varepsilon}\;,
\end{array}\end{equation}
where $C=\left(a_{\min}\inf_{b\in\mathbb{R}^2}|\nabla\varphi_\ell(b)|\right)^{-1}$. The last inequality comes from assumption~(\ref{e:cond:varepsilon}) valid for $\varepsilon>0$ sufficiently small. This last inequality and inequality~(\ref{e:lem4a}) proved in Lemma~\ref{lem:interm2b} then imply that
\begin{eqnarray*}
&&\left|\lim_{\delta\to 0}\frac{2\pi}{\tilde C_{\psi}}\int_{\mathbb{S}^1}\int_{{\cal B}_{\ell}(\widetilde{\varepsilon},n,b)} S_{F,\varepsilon}^{\delta,\nu}(b,k,n)\rmd k\rmd n-A_\ell(b)\rme^{\varphi_\ell(b)n_{\theta_\ell(b)}}\right|\\
&\leq&\left|\frac{2\pi}{\tilde C_{\psi}}\int_{\{a\in(0,a_0),\,|a|\nabla\varphi_{\ell}(b)|-1|<\Delta\}}\left(a\widehat{\psi}(a\nabla\varphi_\ell(b))A_\ell(b)\rme^{\varphi_\ell(b)n_{\theta_\ell(b)}}\right)a^{-2}\rmd a-A_\ell(x)\rme^{\varphi_\ell(x)n_{\theta_\ell(x)}}\right|\\
&&+\left|\frac{2\pi}{\tilde C_{\psi}}\int_{\{a\in(0,a_0),\,|a|\nabla\varphi_{\ell}(b)|-1|<\Delta\}}(\varepsilon a^2\Gamma_1(a,b))a^{-2}\rmd a\right|+(C+1)\widetilde{\varepsilon}\;.
\end{eqnarray*}
To conclude, observe now that the first term of the right--hand side vanishes. Indeed, the wavelet is real isotropic and for any $b\in\mathbb{R}^2$, $a\to \widehat{\psi}(a\nabla\varphi_\ell(b))$ is supported in  $(0,a_{\max})$. Since $a_0\geq a_{\max}$, we then have:
\[
\int_{\{a\in(0,a_0),|a|\nabla\varphi_{\ell}(b)|-1|<\Delta\}}\widehat{\psi}(a\nabla\varphi_\ell(b))\frac{\rmd a}{a}
=\int_{\R}\widehat{\psi}(a\nabla\varphi_\ell(b))\frac{\rmd a}{a}
=\frac{1}{2\pi}\int_{\mathbb{R}^2}\frac{\overline{\widehat{\psi}(\xi)}}{|\xi|^2}~\rmd\xi=\frac{\tilde C_{\psi}}{2\pi}\;,
\]
where $\tilde C_{\psi}$ has been defined in (\ref{e:synchro:id:iso}). The second term of the last inequality can be bounded using the same approach than in inequality~(\ref{inequality}). We then get Theorem~\ref{th:main2}.
\appendix
\section{Quaternionic calculus}\label{s:appendixQuater}
In this appendix, we give a short introduction to the algebra of quaternions $\mathbb{H}$. More details can be found for instance in \cite{sudbery:1979}.\\
$\mathbb{H}$ is the real 4-D vector space spanned by
$\{1, \rmi, \rmj, \rmk \}$, i.e.
a quaternion  is of the form $q=q_0+q_1\rmi+q_2\rmj+q_3\rmk$, where the algebra product is defined by $\rmi^2=\rmj^2=\rmk^2=-1$ and $\rmi\rmj=-\rmj\rmi=\rmk,\,\rmj\rmk=-\rmk\rmj=\rmi,\,\rmk\rmi=-\rmi\rmk=\rmj$. \\
\noindent For a given quaternion $q=q_0+q_1\rmi+q_2\rmj+q_3\rmk$, one defines:
\begin{itemize}
\item its real part: $\mathrm{Re}(q)=q_0$,
\item its vectorial part: $\mathrm{Vect}(q)=(q_1, q_2, q_3)\in \mathbb{R}^3$. \\
If $\mathrm{Re}(q)=0$, $q=q_1\rmi+q_2\rmj+q_3\rmk$ is called a pure quaternion.
\end{itemize}
The conjugate of the quaternion $q$ is $\overline{q}=q_0-q_1\rmi-q_2\rmj-q_3\rmk$, and its norm (or modulus) is defined by:
$$|q|=\sqrt{q\overline{q}}=\sqrt{q_0^2+q_1^2+q_2^2+q_3^2}$$
For any $(q,q')\in \mathbb{H}^2$, one has:
\[
\overline{qq'}=\overline{q'}~\overline{q}~~~and~~~~|qq'|=|q|~|q'|\;.
\]
Additionally, the quaternions are a division algebra, which means that any (non-zero) quaternion admits a multiplicative inverse given by:
\begin{equation}\label{e:inverse}
q^{-1}=\frac{\overline{q}}{|q|^2}\;.
\end{equation}
Observe that  one has:
$\overline{q^{-1}}=\frac{q}{|q|^2}=(\overline{q})^{-1}$.
The set of unit quaternion $\{q\in \mathbb{H}~;~|q|=1\}$ will be denoted by $\mathbb{S}^3$.\\
\noindent The exponential function is defined on the quaternion algebra as
\[
\exp~:q\mapsto\rme^q=\sum_{\ell=0}^{+\infty}\frac{q^\ell}{\ell!}\;,
\]
which converges since $\exp(|q|)$ converges.
Using the exponential map, each unit quaternion of $\mathbb{S}^3$ ($|q|=1$) such that $Vect(q)\neq 0$ can be written as:
\begin{equation}\label{e:polarform1}
q=(\cos\varphi+n ~\sin\varphi)=\rme^{\varphi n}~~~with~~n=\frac{Vect(q)}{|Vect(q)|},~~\cos\varphi = Re(q)~~and~~ \sin\varphi=|Vect(q)|
\end{equation}
which is an extension of the complex exponential. Notice that the usual property $e^{\mu}e^{\nu}=e^{\mu+\nu}$ is no more satisfied in general.
The polar form of any quaternion $q$ is then given by:
\begin{equation}\label{e:polarform2}
q=|q|~(\cos\varphi+n ~\sin\varphi)=|q|~\rme^{\varphi n}
\end{equation}
where $n$ is a pure unit quaternion: $n=a~\rmi+b~\rmj+ c~\rmk$ with $a^2+b^2+c^2=1$, and $\varphi\in \mathbb{R}$.
If $(\varphi,n)\in \mathbb{R}\times \mathbb{S}^3$ satisfies~(\ref{e:polarform2}), one says that $\varphi$ is a scalar argument of $q$ and $n$ a vectorial orientation of $q$.\\
\noindent A quaternion of the form $q=q_0+q_1\rmi+q_2\rmj$ with $(q_0,q_1,q_2)\in \mathbb{R}^3$ ($q_4=0$) is called a Clifford vector. Observe that if $q=a_0+a_1\rmi+a_2\rmj$ and $q'=a'_0+a'_1\rmi+a'_2\rmj$ with $(a_0,a_1,a_2), (a'_0,a'_1,a'_2)\in \mathbb{R}^3$ then
\begin{equation}\label{e:multclifford}
\overline{qq'}=\overline{q'}\times\overline{q}\;.
\end{equation}
In this case any vectorial orientation of $q$ is of the form:
\[
n=a\rmi+ b\rmj\mbox{ with }a^2+b^2=1\;.
\]
Then there exists $\theta\in\mathbb{R}$ such that $n=\cos\theta~\rmi+\sin\theta~\rmj$. This implies that $q$ admits the following polar form:
\begin{equation}\label{e:polarform3}
q=|q|~(\cos\varphi+\sin\varphi\cos\theta~\rmi+\sin\varphi\sin\theta~\rmj)=|q|~\rme^{\varphi(\cos\theta\rmi+\sin\theta\rmj)}\;.
\end{equation}
If $(\varphi,\theta)\in \mathbb{R}^2$ satisfies~(\ref{e:polarform3}), one says that $\varphi$ is a scalar argument of $q$ and $\theta$ a scalar orientation of $q$. Observe that $\varphi$ and $\theta$ are also Euler angles of the vector $\begin{pmatrix}q_0\\q_1\\q_2\end{pmatrix}\in\mathbb{R}^3$.
\section{Hilbert and Riesz Transform}\label{s:appendixRiesz}
In this appendix we briefly recall the Hilbert transform and the analytic signal analysis, and then proceed with a natural extension in two dimension,  the Riesz transform, used to define the 2-D monogenic signal in section \ref{sec:intro}.
\subsection{The 1-D Hilbert Transform}
We first define the Hilbert transform of a 1-D real valued function.
\begin{definition}
Let $f\in L^2(\mathbb{R},\mathbb{R})$. The Hilbert transform of $f$, denoted as $\mathcal{H}f$ is defined in the Fourier domain as follows: for a.e. $\xi\in\mathbb{R}$,
\[
\widehat{\mathcal{H}f}(\xi)=-\rmi~\mathrm{sgn}(\xi)\widehat{f}(\xi)\;.
\]
\end{definition}
\begin{definition}
The complex analytic signal associated to $f\in L^2(\mathbb{R},\mathbb{R})$ is then
\[
F=f+\rmi~\mathcal{H}f\;,
\]
which reads in the Fourier domain as
\[
\widehat{F}=(1+\mathrm{sgn}(\xi))\widehat{f}\;.
\]
\end{definition}
We recall below some properties of the Hilbert transform and the analytic signal.
\begin{proposition}
\;
The Hilbert transform is an antisymmetric operator on $L^2(\mathbb{R},\mathbb{R})$:
\[
<\mathcal{H}f_1,f_2>=-<f_1,\mathcal{H}f_2>\;.
\]
\end{proposition}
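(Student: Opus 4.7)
The plan is to pass to the Fourier domain and exploit the purely imaginary, odd symbol $-\rmi\,\mathrm{sgn}(\xi)$ appearing in the definition of $\mathcal{H}$. Specifically, I would use Plancherel's identity in the form $\int_\mathbb{R} g(x)\overline{h(x)}\rmd x = \int_\mathbb{R} \widehat{g}(\xi)\overline{\widehat{h}(\xi)}\rmd\xi$, applied to both $\langle \mathcal{H}f_1, f_2\rangle$ and $\langle f_1, \mathcal{H}f_2\rangle$. Because $f_1, f_2$ are real-valued, the inner products are real, and one should first check that $\mathcal{H}f_j$ is itself real-valued so that the pairing $\langle \mathcal{H}f_j, \cdot \rangle$ in the real $L^2$ inner product makes sense; this follows from the Hermitian symmetry $\widehat{f_j}(-\xi)=\overline{\widehat{f_j}(\xi)}$ combined with the oddness of $\mathrm{sgn}$, which yields $\widehat{\mathcal{H}f_j}(-\xi)=\overline{\widehat{\mathcal{H}f_j}(\xi)}$.

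Once this preliminary check is done, the core computation is a one-line matter. On the one side,
\[
\langle \mathcal{H}f_1, f_2\rangle = \int_\mathbb{R}\widehat{\mathcal{H}f_1}(\xi)\overline{\widehat{f_2}(\xi)}\rmd\xi = -\rmi\int_\mathbb{R}\mathrm{sgn}(\xi)\widehat{f_1}(\xi)\overline{\widehat{f_2}(\xi)}\rmd\xi,
\]
and on the other,
\[
\langle f_1, \mathcal{H}f_2\rangle = \int_\mathbb{R}\widehat{f_1}(\xi)\overline{\widehat{\mathcal{H}f_2}(\xi)}\rmd\xi = +\rmi\int_\mathbb{R}\mathrm{sgn}(\xi)\widehat{f_1}(\xi)\overline{\widehat{f_2}(\xi)}\rmd\xi,
\]
where the sign flip comes from conjugating the factor $-\rmi\,\mathrm{sgn}(\xi)$ in $\widehat{\mathcal{H}f_2}$. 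Comparing the two expressions yields the claimed identity $\langle \mathcal{H}f_1,f_2\rangle = -\langle f_1,\mathcal{H}f_2\rangle$.

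There is essentially no obstacle here: the statement is really the observation that $\mathcal{H}$ is a Fourier multiplier by the purely imaginary, odd function $-\rmi\,\mathrm{sgn}$, and any purely imaginary multiplier is automatically skew-adjoint (equivalently, antisymmetric on the real Hilbert space). The only subtlety worth recording, if one wants a self-contained argument, is the realness of $\mathcal{H}f_j$ mentioned above, which ensures that we are working inside $L^2(\mathbb{R},\mathbb{R})$ throughout.
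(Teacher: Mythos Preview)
Your argument is correct. The paper does not actually supply a proof of this proposition; it is stated in the appendix as a recalled classical fact about the Hilbert transform, so there is nothing to compare against beyond noting that your Fourier-multiplier computation is the standard and expected justification.
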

\begin{proposition}
\;
\begin{enumerate}
\item $f$ and $\mathcal{H}f$ are orthogonal
\item $\|F\|^2=2\|f\|^2$
\item The spectrum of the analytic signal is one sided.
\end{enumerate}
\end{proposition}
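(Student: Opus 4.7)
The three items are classical consequences of the Fourier-domain definition $\widehat{\mathcal{H}f}(\xi)=-\rmi\,\mathrm{sgn}(\xi)\widehat{f}(\xi)$, and my plan is to reduce each of them to a one-line Plancherel computation, together with the parity of $|\widehat{f}|^{2}$ that follows from $f$ being real-valued.

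For item (1), the plan is to apply Plancherel's identity to the real inner product $\langle f,\mathcal{H}f\rangle=\int_{\mathbb{R}}f(t)\,\mathcal{H}f(t)\,\rmd t$, which gives $\langle f,\mathcal{H}f\rangle=\int_{\mathbb{R}}\widehat{f}(\xi)\,\overline{\widehat{\mathcal{H}f}(\xi)}\,\rmd\xi=\rmi\int_{\mathbb{R}}\mathrm{sgn}(\xi)\,|\widehat{f}(\xi)|^{2}\,\rmd\xi$. Since $f$ is real, $\widehat{f}(-\xi)=\overline{\widehat{f}(\xi)}$, so $|\widehat{f}|^{2}$ is an even function while $\mathrm{sgn}$ is odd; the integrand is therefore an odd integrable function and the integral vanishes. (Alternatively, one can deduce it directly from the previous proposition: the antisymmetry of $\mathcal{H}$ gives $\langle f,\mathcal{H}f\rangle=-\langle \mathcal{H}f,f\rangle=-\langle f,\mathcal{H}f\rangle$, hence $\langle f,\mathcal{H}f\rangle=0$.)

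For item (2), I would first note that the multiplier $-\rmi\,\mathrm{sgn}(\xi)$ has modulus $1$ almost everywhere, so Plancherel immediately yields $\|\mathcal{H}f\|_{L^{2}(\mathbb{R})}=\|f\|_{L^{2}(\mathbb{R})}$. Then, expanding $F=f+\rmi\,\mathcal{H}f$ and using that both $f$ and $\mathcal{H}f$ are real and orthogonal (by item (1)), one gets
\begin{equation*}
\|F\|^{2}=\int_{\mathbb{R}}\bigl(f(t)^{2}+(\mathcal{H}f)(t)^{2}\bigr)\,\rmd t=\|f\|^{2}+\|\mathcal{H}f\|^{2}=2\|f\|^{2}.
\end{equation*}

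For item (3), the plan is to read off the spectrum from the explicit Fourier-domain identity $\widehat{F}(\xi)=(1+\mathrm{sgn}(\xi))\widehat{f}(\xi)$ stated in the definition just above. Since $1+\mathrm{sgn}(\xi)$ equals $0$ for $\xi<0$ and $2$ for $\xi>0$, we have $\widehat{F}(\xi)=0$ for a.e. $\xi<0$, which is precisely the statement that the spectrum of $F$ is supported in $[0,+\infty)$, i.e. one-sided.

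No step is really hard here; the only point that needs care is the parity argument ensuring $\int\mathrm{sgn}(\xi)|\widehat{f}(\xi)|^{2}\,\rmd\xi=0$, which relies crucially on $f$ being real-valued and on $|\widehat{f}|^{2}\in L^{1}(\mathbb{R})$ so that Fubini/parity cancellation is justified.
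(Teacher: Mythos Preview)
Your proof is correct in every detail; each of the three Plancherel/parity computations is valid, and the alternative antisymmetry argument for item~(1) works as well. Note, however, that the paper does not actually supply a proof of this proposition---it is stated as a classical fact without justification---so there is nothing to compare your argument against.
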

\subsection{2-D Riesz transform}
We begin with the definition of the 2-D Riesz transform of a real valued image.
\begin{definition}
Let $f\in L^2(\mathbb{R}^2)$. The Riesz transform of $f$, denoted as $\mathcal{R}f$ is the vector valued function:
\[
\mathcal{R}f=\begin{pmatrix}\mathcal{R}_1f\\\mathcal{R}_2f\end{pmatrix}\;,
\]
where for any $i=1,2$, $\mathcal{R}_if$ is defined in Fourier domain as follows: for a.e. $\xi\in\mathbb{R}^2$,
\[
\widehat{\mathcal{R}_i f}(\xi)=-\rmi~\frac{\xi_i}{|\xi|}\widehat{f}(\xi)\;.
\]
\end{definition}
We now present the key properties of $\mathcal{R}$ (\cite{stein:1970},\cite{unser:vandeville:2009}). The first ones concern the invariance with respect to dilations, translations, and the steerability property (relation with the rotations).
\begin{proposition}\label{pro:invar}
The Riesz transform commutes both with the translation, and the dilation operator: for any $f\in L^2(\mathbb{R}^2)$, $a>0$ and $b\in \mathbb{R}^2$, one has:
\[
\mathcal{R}D_a f=D_a\mathcal{R}f
~~~~
and
~~~~
\mathcal{R}T_b f= T_b\mathcal{R}f\;.
\]
\end{proposition}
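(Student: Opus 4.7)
The plan is to verify both commutation identities by passing to the Fourier domain, where the Riesz transform becomes multiplication by the degree-zero homogeneous symbol $m_i(\xi) = -\rmi \xi_i/|\xi|$. Since Fourier transform is an isomorphism on $L^2(\mathbb{R}^2)$, it suffices to check the identities after taking $\widehat{\cdot}$.

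First I would treat translation. A direct change of variables gives $\widehat{T_b f}(\xi) = \rme^{-\rmi b\cdot \xi}\widehat{f}(\xi)$. Applying $\widehat{\mathcal{R}_i \cdot}$ yields
\[
\widehat{\mathcal{R}_i T_b f}(\xi) = -\rmi\frac{\xi_i}{|\xi|}\rme^{-\rmi b\cdot\xi}\widehat{f}(\xi) = \rme^{-\rmi b\cdot\xi}\widehat{\mathcal{R}_i f}(\xi) = \widehat{T_b \mathcal{R}_i f}(\xi),
\]
which proves $\mathcal{R}T_b = T_b \mathcal{R}$. The key point is simply that the multiplier does not depend on $b$, so it commutes with the modulation $\rme^{-\rmi b\cdot\xi}$.

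Next I would treat dilation. Using the convention $D_a f(x) = a^{-1} f(x/a)$, the change of variables $y = x/a$ in two dimensions gives $\widehat{D_a f}(\xi) = a\,\widehat{f}(a\xi)$. Therefore
\[
\widehat{\mathcal{R}_i D_a f}(\xi) = -\rmi\frac{\xi_i}{|\xi|}\,a\,\widehat{f}(a\xi),
\]
while on the other side
\[
\widehat{D_a \mathcal{R}_i f}(\xi) = a\,\widehat{\mathcal{R}_i f}(a\xi) = a\left(-\rmi\frac{a\xi_i}{|a\xi|}\right)\widehat{f}(a\xi) = -\rmi a\frac{\xi_i}{|\xi|}\widehat{f}(a\xi),
\]
where in the last step I used $|a\xi| = a|\xi|$ since $a>0$. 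The two expressions coincide, hence $\mathcal{R}D_a = D_a \mathcal{R}$. Taking inverse Fourier transforms yields both identities in the space domain.

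There is no real obstacle here: the proof is essentially a one-line observation that the Riesz symbol $\xi \mapsto -\rmi\xi_i/|\xi|$ is homogeneous of degree $0$ and independent of translation in $x$-space. The only mild care needed is bookkeeping the normalization constants in the Fourier conventions used at the beginning of Section~\ref{s:wavanal} (the $1/(2\pi)$ factor) and making sure the dilation formula is the one consistent with the $L^1$-normalization $D_a f(x) = a^{-1} f(x/a)$ fixed earlier. Both identities extend from Schwartz functions to all of $L^2(\mathbb{R}^2)$ by density, since $T_b$, $D_a$ and $\mathcal{R}_i$ are all bounded on $L^2(\mathbb{R}^2)$.
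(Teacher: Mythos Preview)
Your argument is correct. The paper does not actually give a proof of this proposition; it merely states it and cites \cite{stein:1970} and \cite{unser:vandeville:2009}. Your Fourier-side verification, using that the symbol $\xi\mapsto -\rmi\,\xi_i/|\xi|$ is homogeneous of degree~$0$ and independent of $b$, is exactly the standard justification and is fully consistent with the conventions fixed in the paper.
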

\begin{proposition}\label{pro:steerability}
The Riesz transform is steerable, that is, for any $f\in L^2(\mathbb{R}^2)$ one has
\begin{equation}\label{e:steerability}
R_\theta(\mathcal{R}f)=r_\theta^{-1}\mathcal{R}(R_\theta f)=\begin{pmatrix}\cos\theta~\mathcal{R}_1(R_\theta f)+\sin\theta~\mathcal{R}_2(R_\theta f)\\-\sin\theta~\mathcal{R}_1(R_\theta f)+\cos\theta~\mathcal{R}_2(R_\theta f)\end{pmatrix}\;,
\end{equation}
where $r_\theta$ is the rotation matrix defined by~(\ref{e:rotation}).
\end{proposition}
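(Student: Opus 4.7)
The natural plan is to pass to the Fourier domain, where all three operators involved have very simple expressions, and reduce the identity to a pointwise algebraic relation in $\xi$.

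\textbf{Step 1: Fourier representations.} The first move is to record three elementary facts. The Riesz transform is diagonalized in frequency: $\widehat{\mathcal{R}_i g}(\xi)=-\rmi\,\xi_i/|\xi|\,\widehat{g}(\xi)$. The rotation operator satisfies the intertwining relation $\widehat{R_\theta g}(\xi)=\widehat{g}(r_\theta^{-1}\xi)$, which follows from a change of variables in the Fourier integral using $\det r_\theta=1$. And rotations preserve the Euclidean norm, so $|r_\theta^{-1}\xi|=|\xi|$.

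\textbf{Step 2: Compute both sides in Fourier.} I would compute $\widehat{R_\theta(\mathcal{R}_i f)}(\xi)$ directly as
\[
\widehat{R_\theta(\mathcal{R}_i f)}(\xi)=\widehat{\mathcal{R}_i f}(r_\theta^{-1}\xi)=-\rmi\,\frac{(r_\theta^{-1}\xi)_i}{|\xi|}\,\widehat{f}(r_\theta^{-1}\xi),
\]
using Step~1. On the other hand, for each component of $r_\theta^{-1}\mathcal{R}(R_\theta f)$, I would expand using the explicit matrix $r_\theta^{-1}=\bigl(\begin{smallmatrix}\cos\theta&\sin\theta\\ -\sin\theta&\cos\theta\end{smallmatrix}\bigr)$, apply the Fourier formula for each $\mathcal{R}_j(R_\theta f)$, and combine. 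For the first component this yields
\[
-\rmi\,\frac{\cos\theta\,\xi_1+\sin\theta\,\xi_2}{|\xi|}\,\widehat{f}(r_\theta^{-1}\xi),
\]
and analogously for the second component.

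\textbf{Step 3: Match coefficients.} I would then observe that $(r_\theta^{-1}\xi)_1=\cos\theta\,\xi_1+\sin\theta\,\xi_2$ and $(r_\theta^{-1}\xi)_2=-\sin\theta\,\xi_1+\cos\theta\,\xi_2$, so the two Fourier expressions coincide componentwise. Injectivity of the Fourier transform on $L^2(\mathbb{R}^2)$ then gives $R_\theta(\mathcal{R}f)=r_\theta^{-1}\mathcal{R}(R_\theta f)$, and expanding the matrix product yields the displayed two-component formula.

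\textbf{Main obstacle.} There is no real analytic difficulty; the proof is a bookkeeping exercise. The only point where care is needed is to keep the direction of the rotation consistent: $R_\theta$ acts on functions by $R_\theta f(x)=f(r_\theta^{-1}x)$, which forces the Fourier-side rotation to be by $r_\theta^{-1}$ and is what ultimately produces the $r_\theta^{-1}$ factor premultiplying $\mathcal{R}(R_\theta f)$. Getting the signs right in the off-diagonal entries of $r_\theta^{-1}$ is the main thing to watch.
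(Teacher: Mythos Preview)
Your proof is correct and follows essentially the same approach as the paper: both pass to the Fourier domain, use the intertwining relation $\widehat{R_\theta g}(\xi)=\widehat{g}(r_\theta^{-1}\xi)$ and the multiplier formula for $\mathcal{R}$, and reduce the identity to the algebraic fact that $r_\theta^{-1}\xi$ has components $\cos\theta\,\xi_1+\sin\theta\,\xi_2$ and $-\sin\theta\,\xi_1+\cos\theta\,\xi_2$. The only cosmetic difference is that the paper starts from $\widehat{\mathcal{R}(R_\theta f)}(\xi)$ and inserts $r_\theta r_\theta^{-1}$ to arrive at $r_\theta\,\widehat{R_\theta(\mathcal{R}f)}(\xi)$, whereas you compute $\widehat{R_\theta(\mathcal{R}_i f)}(\xi)$ directly and match components; the underlying computation is the same.
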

\begin{proof}
We will prove in the Fourier domain that
$
\mathcal{R}(R_\theta f)=r_\theta R_\theta(\mathcal{R}f)\;,
$
which is equivalent to equation~(\ref{e:steerability}). Let us first remark that for a.e. $\xi\in \mathbb{R}^2$,
\begin{equation}\label{e:steer:fourier}
(\widehat{R_\theta f})(\xi)=\widehat{f}\left(r_{\theta}^{-1}\xi\right)\;.
\end{equation}
Using the definition of the Riesz transform in Fourier domain, one has, for a.e. $\xi\in\mathbb{R}^2$:
\begin{eqnarray*}
\widehat{\mathcal{R}(R_\theta f)}(\xi)&=&r_\theta r_\theta^{-1}\times \left(-\rmi \frac{\xi}{|\xi|}\widehat{f}(r_\theta^{-1}\xi)\right)\\
&=&-\rmi r_\theta\left(\frac{r_\theta^{-1}\xi}{|r_\theta^{-1}\xi|}\widehat{f}(r_\theta^{-1}\xi)\right)\\
&=&r_\theta \left(\widehat{\mathcal{R}f}\right)(r_\theta^{-1}\xi)\\
&=&r_{\theta}\left(\widehat{R_\theta\mathcal{R}f}\right)(\xi)\;,
\end{eqnarray*}
the last relation coming from~(\ref{e:steer:fourier}). One then deduces the required result.
\end{proof}
The Riesz transform is also a unitary and componentwise antisymmetric operator on $L^2(\mathbb{R}^2)$:
\begin{proposition}
\label{e:Ri}
For any $i\in\{1,2\}$, the $i$--th component of the Riesz transform $\mathcal{R}_i$ is an antisymmetric operator, namely for all $f,g\in L^2(\mathbb{R}^2)$:
\begin{equation}\label{eq:Ri}
<\mathcal{R}_i f,g>_{L^2(\mathbb{R}^2)}=-< f,\mathcal{R}_i g>_{L^2(\mathbb{R}^2)}\;.
\end{equation}
Since $\mathcal{R}_1^2+\mathcal{R}_2^2=-Id$, it implies in particular that:
\begin{equation}\label{e:R}
<\mathcal{R}f,\mathcal{R}g>_{L^2(\mathbb{R}^2,\mathbb{R}^2)}=<\mathcal{R}_1 f,\mathcal{R}_1 g>_{L^2(\mathbb{R}^2)}+<\mathcal{R}_2 f,\mathcal{R}_2 g>_{L^2(\mathbb{R}^2)}=
<f,g>_{L^2(\mathbb{R}^2)}
\end{equation}
\end{proposition}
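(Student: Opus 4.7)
The plan is to pass to the Fourier side via Plancherel's theorem and exploit the algebraic properties of the Riesz multiplier $m_i(\xi) = -\mathrm{i}\,\xi_i/|\xi|$. The key observation is that $m_i$ is purely imaginary, so $\overline{m_i(\xi)} = -m_i(\xi)$ for every $\xi\ne 0$. Since $|m_i(\xi)|\le 1$, each $\mathcal{R}_i$ is a bounded operator on $L^2(\mathbb{R}^2)$, which legitimizes all the integral manipulations that follow.

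For the antisymmetry (\ref{eq:Ri}), I would write, using Plancherel,
\[
\langle \mathcal{R}_i f,g\rangle_{L^2} = \int_{\mathbb{R}^2} m_i(\xi)\,\widehat{f}(\xi)\,\overline{\widehat{g}(\xi)}\,\rmd\xi,
\qquad
\langle f,\mathcal{R}_i g\rangle_{L^2} = \int_{\mathbb{R}^2} \widehat{f}(\xi)\,\overline{m_i(\xi)}\,\overline{\widehat{g}(\xi)}\,\rmd\xi,
\]
and conclude by invoking $\overline{m_i} = -m_i$. This handles the first assertion.

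For the second assertion, I first verify the intermediate claim $\mathcal{R}_1^2 + \mathcal{R}_2^2 = -\mathrm{Id}$ by multiplier arithmetic: $m_i(\xi)^2 = -\xi_i^2/|\xi|^2$, hence $m_1(\xi)^2 + m_2(\xi)^2 = -(\xi_1^2+\xi_2^2)/|\xi|^2 = -1$, which is precisely the Fourier symbol of $-\mathrm{Id}$. Applying the antisymmetry established above to each component gives $\langle \mathcal{R}_i f,\mathcal{R}_i g\rangle = -\langle \mathcal{R}_i^2 f,g\rangle$, and summing over $i=1,2$ yields
\[
\langle \mathcal{R}f,\mathcal{R}g\rangle_{L^2(\mathbb{R}^2,\mathbb{R}^2)} = -\langle (\mathcal{R}_1^2+\mathcal{R}_2^2)f,g\rangle_{L^2} = \langle f,g\rangle_{L^2},
\]
which is (\ref{e:R}).

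There is no genuine obstacle here: the statement is a direct consequence of symbol calculus for Fourier multipliers, and both claims reduce to the two elementary algebraic identities satisfied by $m_i$ (being purely imaginary, and having squares that sum to $-1$). If any care is needed at all, it is only in justifying that $\mathcal{R}_if, \mathcal{R}_ig \in L^2(\mathbb{R}^2)$ so that the inner products are well defined, which is immediate from $\|m_i\|_{L^\infty}\le 1$.
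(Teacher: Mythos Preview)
Your argument is correct and is exactly the standard Fourier-multiplier computation one would expect here. Note, however, that the paper does not actually supply a proof of this proposition: it is stated in the appendix as a known property of the Riesz transform (with references to Stein and Unser--Van De Ville), and the only ``argument'' embedded in the statement is the remark that (\ref{e:R}) follows from (\ref{eq:Ri}) together with $\mathcal{R}_1^2+\mathcal{R}_2^2=-\mathrm{Id}$, which is precisely the deduction you carry out in your last display. So your write-up simply fills in the details the paper leaves to the reader, and there is nothing to compare beyond that.
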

\bibliographystyle{unsrt}
\bibliography{Synchro}
\end{document}